\documentclass[10pt]{elsarticle}

\usepackage{amsmath}
\usepackage{cases}
\usepackage{amsfonts}
\usepackage{amssymb}
\usepackage{color}
\usepackage{graphicx}
\usepackage{epstopdf}
\usepackage{subfigure}
\usepackage{color}
\usepackage{diagbox}
\usepackage{float}
\usepackage{algorithm}
\usepackage{algorithmic}
\usepackage{multirow}
\usepackage{amsthm}
\usepackage{geometry}
\usepackage{graphics}
\setcounter{MaxMatrixCols}{30}
\usepackage{mathrsfs}
\usepackage{setspace}
\usepackage{caption}

\allowdisplaybreaks[4]
\usepackage[hidelinks]{hyperref} 
\providecommand{\U}[1]{\protect\rule{.1in}{.1in}}

\pagenumbering{arabic}
\setlength{\textwidth}{165mm}
\setlength{\textheight}{220mm}
\headsep=15pt \topmargin=-5mm \oddsidemargin=-0.36cm
\evensidemargin=-0.36cm \raggedbottom
\newtheorem{theorem}{Theorem}[section]

\newtheorem{assumption}[theorem]{Assumption}
\newtheorem{example}[theorem]{Example}

\newtheorem{lemma}[theorem]{Lemma}

\newtheorem{remark}[theorem]{Remark}
\numberwithin{equation}{section}

\begin{document}
\begin{frontmatter}

\title{Uniform Convergence Rate of the Nonparametric Estimator for Integrated Diffusion Processes}

\author {Shaolin Ji}
\ead{jsl@sdu.edu.cn}
\author{Linlin Zhu\corref{cor1}}
\ead{201611343@mail.sdu.edu.cn}
\cortext[cor1]{Corresponding author}

\address{Institute for Financial Studies, Shandong University, Jinan  250100, China}

\begin{abstract}
The nonparametric estimation of integrated diffusion processes has been extensively studied, with most existing research focusing on pointwise convergence. This paper is the first to establish uniform convergence rates for the Nadaraya-Watson estimators of their coefficients. We derive these rates over unbounded support under the assumptions of a vanishing observation interval and a long time horizon. Our findings serve as essential tools for specification testing and semiparametric inference in various diffusion models and time series, facilitating applications in finance, geology, and physics through nonparametric estimation methods.
\end{abstract}

\begin{keyword}
Uniform Convergence Rate\sep Integrated Diffusion Process \sep Kernel Smoothing \sep Exponential Concentration Inequality
\MSC { 60J60, 62G20, 62M05 	  }
\end{keyword}

\end{frontmatter}
\section{Introduction}
Statistical inference of diffusion process has become a hot topic in finance, economics, geology, and physics, including parameter and nonparametric estimation of the process based on discrete observations and other type of observations, for example, \cite{florens1993estimating, bibby1995martingale, jiang1997nonparametric, bandi2003fully, fan2003, tang2009parameter, ait2016bandwidth}. In several cases, the stochastic process can be seen as an integrated diffusion process, capturing the cumulative effect of past perturbations. For instance, such processes are used to model ice-core data on oxygen isotopes, which serve as proxies for paleo-temperatures \cite{ditlevsen2002fast}. Moreover, the integrated diffusion process generalizes the unit root process to a continuous-time setting  \cite{nicolau2007nonparametric},
and plays a crucial role in the study of realized volatility in finance  \cite{barndorff2002econometric, andersen2009realized}.

Parameter estimation for integrated diffusion processes has been investigated in \cite{gloter2001parameter, ditlevsen2004inference, gloter2006parameter}.
In terms of nonparametric estimation, the Nadaraya-Watson (N-W) estimators, local linear estimator, and re-weighted N-W estimator for the infinitesimal coefficients of the process have been established in \cite{nicolau2007nonparametric, wang2011locallinear, song2013reweighted}, among others. These studies primarily focused on pointwise properties, establishing results on consistency and asymptotic normality.  However, the uniform convergence for these estimators remain largely unexplored.

Uniform convergence properties for nonparametric estimators of the related functionals for the diffusion process are crucial for speciﬁcation testing of the process, see \cite{hong2005nonparametric, li2007testing, ait2009nonparametric, kristensen2011semi}. Additionally, in semiparametric estimation, where either the drift or the diffusion coefficient is parametrically specified while the other remains unrestricted, the uniform convergence rate of a nonparametric estimator for the unspecified term can be applied to get the asymptotic properties of the  semiparametric estimator of the unknown parameter, see \cite{bu2023uniform}.
In the discrete-time setting, uniform consistency of kernel-type estimators
have been studied by several researchers, including  \cite{bierens1983uniform, andrews1995nonparametric, liebscher1996strong, hansen2008uniform, kristensen2009uniform, li2012local, chan2014uniform, lidegui2021nonparametric}.
However, results concerning uniform consistence rates for estimators in diffusion models are relatively sparse.
\cite{koo2012estimation} investigated the uniform convergence of nonparametric estimators for both the drift and diffusion coefficients in locally semiparametric stationary diffusion process.
\cite{kanaya2017uniform} derived the uniform convergence rates for the modified N-W estimators of the drift and diffusion coefficients in continuous-time stationary diffusion models using a damping function. \cite{bu2023uniform} obtained uniform and $\mathbb{L}_{p}$ convergence rates of kernel-based estimators of the instantaneous conditional mean and variance functions in continuous-time recurrent diffusion processes.  This paper aims to fill this gap by studying the uniform convergence rates of the N-W estimators for the coefficients of integrated diffusion processes.

Consider an integrated diffusion process given by $Y_{t}=\int_{0}^{t}X_{s} ds$, where $\{X_{t}\}$ follows the stochastic differential equation,
\begin{flalign}
\label{model-sde}
d X_{t}=b (X_{t}) dt+\sigma(X_{t}) d W_{t},
\end{flalign}
where $b $ and $\sigma$ are the drift and diffusion functions. $W$ is a standard Brownian motion. Suppose the process $\{Y_{t}\}$ is observed at $t_{i}=i\Delta,i=0,1,\cdots,n$ over the time interval $\left[0,T\right]$, where $T>0$ and $\Delta=T/n$ is the time distance between adjacent observations.
Without loss of generality, we assume the observations are equispaced.
Estimating the coefficients of $\{X_{t}\}$ directly from the observations $\{Y_{t_{i}}\}$ is difficult due to the unknown conditional distribution of $\{Y_{t}\}$. Instead, we use  $\breve{X}_{i\Delta}=\frac{ Y_{i\Delta}-Y_{(i-1)\Delta} }{\Delta}$ as an estimate of $X_{i\Delta}$. Given the sample $\{\breve{X}_{i\Delta};i=1,2,\cdots,n\}$, the following relationships hold:
\begin{align}
& \mathbb{E}\left[\frac{\left(\breve{X}_{(i+1) \Delta}-\breve{X}_{i \Delta}\right)^2}{\Delta} \Bigg| \mathcal{F}_{(i-1) \Delta}\right]=\frac{2}{3} \sigma^2(X_{(i-1) \Delta})+O_{P}(\Delta),\\
& \mathbb{E}\left[\frac{\breve{X}_{(i+1) \Delta}-\breve{X}_{i \Delta}}{\Delta} \Bigg| \mathcal{F}_{(i-1) \Delta}\right]= b (X_{(i-1) \Delta})+O_{P}(\Delta),
\end{align}
where $\mathcal{F}_{t}=\sigma\{X_{s},s\leq t\}$. Based on these relationships, the N-W estimators of $\sigma^{2}(x)$ and $b (x)$ can be defined as follows,
\begin{flalign}
\hat{\sigma}^{2}(x)&=\frac{ \frac{1}{T} \sum_{i=1}^{n-1} K_{h}(\breve{X}_{(i-1)\Delta}-x)
 \frac{3}{2}(\breve{X}_{(i+1)\Delta}-\breve{X}_{i\Delta})^{2}  }
{ \frac{\Delta}{T} \sum_{i=1}^{n-1} K_{h}(\breve{X}_{(i-1)\Delta}-x)},
\label{estimator-sigma}  \\
\hat{b }(x)&=\frac{ \frac{1}{T} \sum_{i=1}^{n-1} K_{h}(\breve{X}_{(i-1)\Delta}-x)
(\breve{X}_{(i+1)\Delta}-\breve{X}_{i\Delta})  }
{ \frac{\Delta}{T} \sum_{i=1}^{n-1} K_{h}(\breve{X}_{(i-1)\Delta}-x)
  },  \label{estimator-mu}
\end{flalign}
where $K$ is the kernel function, $h$ is the bandwidth, $K_{h}\left(\cdot-x\right)=K\left((\cdot-x)/h\right)/h$.

In comparison,
the N-W estimators for $\sigma^{2}(x)$ and $b (x)$, based on direct observations $\{X_{i\Delta};i=0,1,\cdots,n\}$
are given by,
\begin{flalign}
\tilde{\sigma}^{2}(x)&=\frac{ \frac{1}{T} \sum_{i=1}^{n} K_{h}(X_{(i-1)\Delta}-x)
(X_{i\Delta}-X_{(i-1)\Delta})^{2}  }
{ \frac{\Delta}{T} \sum_{i=1}^{n} K_{h}(X_{(i-1)\Delta}-x)
 },     \label{estimatorsigmainitial}\\
\tilde{b }(x)&=\frac{ \frac{1}{T} \sum_{i=1}^{n} K_{h}(X_{(i-1)\Delta}-x)
(X_{i\Delta}-X_{(i-1)\Delta})  }
{ \frac{\Delta}{T} \sum_{i=1}^{n} K_{h}(X_{(i-1)\Delta}-x)
  }. \label{estimatormuinitial}
\end{flalign}
To establish pointwise properties, such as the consistency  of $\tilde{\sigma}^{2}(x)$ (or $\tilde{b }(x)$), it suffices to verify that
$\hat{\sigma}^{2}(x)-\tilde{\sigma}^{2}(x)=o_{P}(1)$ (or $\hat{b }^{2}(x)-\tilde{b }^{2}(x)=o_{P}(1)$). This follows from the relationships,
\begin{flalign*}
&\mathbb{E}\left[\frac{\frac{3}{2}\left(\breve{X}_{(i+1) \Delta}-\breve{X}_{i \Delta}\right)^2}{\Delta} -\frac{(X_{i\Delta}-X_{(i-1)\Delta})^{2} }{\Delta}\right]=O( \Delta),   \\
 &\mathbb{E}\left[\frac{\breve{X}_{(i+1) \Delta}-\breve{X}_{i \Delta}}{\Delta} -\frac{X_{i\Delta}-X_{(i-1)\Delta} }{\Delta}\right]=O(\Delta).
\end{flalign*}
These results also play a crucial role in establishing the asymptotic normality of the estimators. However, the uniform convergence rate cannot be derived directly from these expressions.
To obtain the uniform rate, it is necessary to control the discretization error using the global modulus of continuity of the process under a long observation time horizon ($T\to \infty$).
The covering-number technique is employed to derive the rate over unbounded support.
A key challenge in proving the uniform convergence rate of $\hat{\sigma}^{2}(x)$ arises due to time inconsistency. Specifically, unlike $\tilde{\sigma}^{2}(x)$, where the same observations are used, the terms $(\breve{X}_{(i+1)\Delta}-\breve{X}_{i\Delta})^{2} $ and
$K_{h}(\breve{X}_{(i-1)\Delta}-x)$ in $\hat{\sigma}^{2}(x)$ involve different observations. This makes it difficult to decompose the estimator into a structure that ensures  $\hat{\sigma}^{2}(x)$ achieves an optimal convergence rate applying It$\mathrm{\hat{o}}$ formula.
Furthermore, since the process $\{X_{t}\}$ is unobservable, we need to deal with terms like $(Y_{(i+1)\Delta} - Y_{i\Delta})^{2}$ rather than $(X_{(i+1)\Delta} - X_{i\Delta})^{2}$, as required for $\tilde{\sigma}^{2}(x)$.
Notably, the convergence rate of the diffusion estimator for the integrated diffusion process $((\log n)^{3}/n)^{2/5}$, differing from the rate $(\log n/n)^{2/5}$ for  continuous diffusion processes. See Remark \ref{remarkdiffusion} for details.

This paper is structured as follows. Section $2$ introduces the assumptions on the process $\{X_{t}\}$ and presents our main results. Section $3$ provides auxiliary lemmas, while Section $4$ contains the proofs. The finite-sample performance of the estimators is investigated through Monte-Carlo simulations in Section $5$.

\section{Assumptions and Main Theorems}
Let $\mathfrak{D}=(\breve{l},\breve{r})$ denote the domain of the process $\{X_{t}\}$, where $-\infty\le \breve{l}<\breve{r}\le \infty$.
In order to establish the uniform convergence rates for the estimators, we impose the following assumptions on the process $\{X_{t}\}$, and the kernel function.

\begin{assumption}
\label{assumption-stationary and mixing}
$(1)$ $b (\cdot):\mathfrak{D}\to \mathbb{R}$, $\sigma(\cdot):\mathfrak{D}\to (0,\infty)$ are both twice continuously differentiable, and
they satisfy global Lipschitz and linear growth conditions.     \\
$(2)$
The process $\{X_t\}_{t\geq 0}$ is strictly stationary and $\alpha$-mixing, with an invariant probability density $\pi(\cdot)$. Moreover, its mixing coefficient exhibits polynomial decay, i.e., there exist constants $\breve{\beta}>0$ and $\breve{A}>0$ such that $\alpha(s)\le \breve{A}s^{-\breve{\beta}}$.      \\
$(3)$
There exists a constant $q > 0$ such that $\mathbb{E}[|X_{t}|^{2+q}]<\infty$.
\end{assumption}
\begin{remark}
Condition $(1)$ ensures the existence of a unique strong solution to the model (\ref{model-sde}) up to an explosion time.
The mixing properties of the process are discussed in \cite{Doukhan1995Mixing, veretennikov1997polynomial}.
A sufficient condition for ensuring that the process $\{X_{t}\}$ is positive recurrent, with a stationary distribution and time-invariant density, is that
$\int_{\breve{l}}^{\breve{r}}m(x)dx<\infty$, where
$m(x)=\frac{1}{\sigma^{2}(x)}\exp\left\{ 2\int_{c}^{x}\frac{b (u)}{\sigma^{2}(u)}du\right\}$, for $c\in(\breve{l},\breve{r})$.
A positive recurrent process initiated at its stationary distribution is strictly stationary.

Given the linear growth condition of $b $ and $\sigma$, along with condition $(3)$, we have $\mathbb{E}[|b (X_{t})|^{2+q}]<\infty$, and $\mathbb{E}[|\sigma(X_{t})|^{2+q}]<\infty$. Consequently,
$\mathbb{E}[|X_{t}-X_{s}|^{2+q}]<C|t-s|^{\frac{2+q}{2}}$.
Therefore, according to Lemma A.1 in \cite{kanaya2016estimation}, as $\Delta\to 0$, the modulus of continuity of  $X_{t}$ satisfies,
\begin{equation}
\label{result-incrementop}
\sup_{|s-t|\in[0,\Delta],s,t\in[0,\infty)}|X_{s}-X_{t}| =O_{a.s.} (\Delta^{r}),\quad \text{for\ any\ } r\in\left[0,
\frac{1}{2}-\frac{1}{2+q}\right).
\end{equation}
\end{remark}

\begin{assumption}
\label{assumption-kernel and damping}
$(1)$ $K(\cdot):\left(\mathbb{R}\rightarrow
\mathbb{R}\right)$ is symmetric and of bounded variation, satisfying the normalization condition  $\int_{\mathbb{R}}K(x)dx=1$. Additionally, it is uniformly bounded, and
$\int_{\mathbb{R}}x^{2}\vert K(x)\vert dx\le \breve{K}$ for $\breve{K}>0$.      \\
$(2)$ $K\left(\cdot\right)$ is compactly supported on $[-\bar{c}_{K},\bar{c}_{K}]$ for some constant $\bar{c}_{K}>0$ and is Lipschitz continuous.
\end{assumption}

Our main results are stated as follows.
\begin{theorem}
\label{thm-diffusion}
Suppose the following conditions hold:    \\
$(1)$ Assumptions $\ref{assumption-stationary and mixing}$ and $\ref{assumption-kernel and damping}$ are satisfied.  $\sigma$ and $(\sigma^{2}\pi)''$ are uniformly bounded.
$|\partial^{2} \tilde{f} (x)|=O(|x|)$ as $|x|\to\infty$, where $\tilde{f}=b ,\sigma$.  \\
$(2)$  The density function satisfies $\sup_{x}|\partial^{k}\pi(x)|<\infty$ for $k=0,1,2$, and
$\sup_{x,y}|\pi_{t,t+s}(x,y)|<\infty$ for all $s,t\geq 0$, where $\pi_{t,t+s}$ is the joint density of $X_{t}$ and $X_{t+s}$.       \\
$(3)$ $\Delta^{-1}=O(n^{\kappa})$, $\Delta^{\frac{1}{2}-\frac{1}{2+q}}/h\to 0$, and $(\log n)/(n^{\theta}h)\to 0$ as $n\rightarrow \infty$ and $\Delta, h \to 0$,
where constants $\kappa \in(0,1/2)$ and $\theta\in(0,1)$ satisfy $1-\theta -\kappa>0$.
Furthermore, $q\geq \frac{2(1+\theta-\kappa)}{\kappa}$.      \\
Let $a_{n,T}=\Delta h^{-1/(1+q)} +\sqrt{(\log{n})^{3}/(nh)} + h^2$.
If
\begin{flalign}
\label{betaconditiondiff}
\breve{\beta}>\max\left\{
\frac{2+3\theta}{1-\theta -\kappa},\ \frac{2+1/ (2+q)}{1-2\kappa} \right\},
\end{flalign}
and   $a_{n,T}/\delta_{n,T}\to 0$, $\Delta^{\frac{1}{2}-\frac{1}{2+q}}/(h\delta_{n,T})\to 0$, $\sqrt{(\log{n})/(n^{\theta}h)}/\delta_{n,T}\to 0$, where $\delta_{n,T}:=\inf_{\vert x\vert\le \bar{b}_{n,T}}\pi(x)>0$ for any sequence $\bar{b}_{n,T} \to \infty$ as $T,n \to \infty$.
Then we have, as $\Delta, h \to 0$ and $n,T \to \infty$,
\begin{equation}
\label{conclusionthmdiffusion}
\sup_{\vert x\vert\le \bar{b}_{n,T}} \left\vert\hat{\sigma}^{2}(x)-\sigma^{2}(x)\right\vert
=O_{P}\left(a_{n,T}/\delta_{n,T}\right).
\end{equation}  	
\end{theorem}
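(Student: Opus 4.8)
The plan is to use the ratio decomposition standard for Nadaraya--Watson estimators. Writing $\hat\sigma^2(x)=\hat L(x)/\hat p(x)$ with $\hat p(x)=\tfrac{\Delta}{T}\sum_{i=1}^{n-1}K_h(\breve X_{(i-1)\Delta}-x)$ and $\hat L(x)=\tfrac1T\sum_{i=1}^{n-1}K_h(\breve X_{(i-1)\Delta}-x)\,\tfrac32(\breve X_{(i+1)\Delta}-\breve X_{i\Delta})^2$, one has
\begin{equation*}
\hat\sigma^2(x)-\sigma^2(x)=\frac{\big[\hat L(x)-\sigma^2(x)\pi(x)\big]-\sigma^2(x)\big[\hat p(x)-\pi(x)\big]}{\hat p(x)}.
\end{equation*}
Since $\sigma^2$ is uniformly bounded by hypothesis, (\ref{conclusionthmdiffusion}) follows once I show, uniformly over $\{|x|\le\bar b_{n,T}\}$: (a) $\hat p(x)\ge c\,\delta_{n,T}$ with probability tending to one; (b) $\sup|\hat p(x)-\pi(x)|=O_P(a_{n,T})$; and (c) $\sup|\hat L(x)-\sigma^2(x)\pi(x)|=O_P(a_{n,T})$.

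For (a) and (b) the first step is to pass, inside the kernel, from the observable pseudo-observations $\breve X_{(i-1)\Delta}$ to the unobservable $X_{(i-1)\Delta}$. Because $\breve X_{i\Delta}-X_{i\Delta}=\tfrac1\Delta\int_{(i-1)\Delta}^{i\Delta}(X_s-X_{i\Delta})\,ds$, the global modulus of continuity (\ref{result-incrementop}) gives $\sup_i|\breve X_{i\Delta}-X_{i\Delta}|=O_{a.s.}(\Delta^{r})$ for every $r<\tfrac12-\tfrac1{2+q}$; together with the Lipschitz continuity of $K$ (Assumption \ref{assumption-kernel and damping}(2)), the boundedness of $\pi$, and the compact support of $K$, this replacement costs at most a term of order $\Delta^{1/2-1/(2+q)}/h$, which the side conditions render negligible relative to $\delta_{n,T}$ and $a_{n,T}$. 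It then remains to bound the infeasible estimator $\tfrac\Delta T\sum K_h(X_{(i-1)\Delta}-x)$ against $\pi(x)$. For (a), a robust exponential concentration inequality for $\alpha$-mixing arrays, combined with a covering-number argument over the expanding set $\{|x|\le\bar b_{n,T}\}$ (the Lipschitz continuity of $K$ making the covering number polynomial in $n$, so $\log(\text{covering number})\asymp\log n$), gives a uniform bound of order $\sqrt{(\log n)/(n^{\theta}h)}+h^2$ — here the effective sample size $n^{\theta}$ and the decay exponent in (\ref{betaconditiondiff}) are calibrated precisely so that the mixing tail is summable — and this is $o(\delta_{n,T})$ by the stated side conditions. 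For (b), the genuine time-integral (occupation-functional) structure of $\hat p$ lets one replace $n^{\theta}$ by $n$, so that $\sup|\hat p(x)-\pi(x)|=O_P(\sqrt{(\log n)/(nh)}+h^2)=O_P(a_{n,T})$; this is the content of an auxiliary lemma of Section 3.

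Part (c) is the heart of the proof. Setting $m_i:=\mathbb E\big[\tfrac32(\breve X_{(i+1)\Delta}-\breve X_{i\Delta})^2\mid\mathcal F_{(i-1)\Delta}\big]=\Delta\,\sigma^2(X_{(i-1)\Delta})+\Delta R_i$ with $|R_i|\le C\Delta(1+|X_{(i-1)\Delta}|^2)$ — the explicit form of the $O_P(\Delta)$ remainder, read off from the iterated-integral representation of $\breve X_{(i+1)\Delta}-\breve X_{i\Delta}$ as in \cite{nicolau2007nonparametric} — I decompose
\begin{equation*}
\hat L(x)-\sigma^2(x)\pi(x)=\mathrm{(I)}(x)+\mathrm{(II)}(x)+\sigma^2(x)\big[\hat p(x)-\pi(x)\big]+\mathrm{(III)}(x),
\end{equation*}
where $\mathrm{(I)}(x)=\tfrac1T\sum_iK_h(\breve X_{(i-1)\Delta}-x)\big[\tfrac32(\breve X_{(i+1)\Delta}-\breve X_{i\Delta})^2-m_i\big]$ is the centered stochastic part, $\mathrm{(II)}(x)=\tfrac\Delta T\sum_iK_h(\breve X_{(i-1)\Delta}-x)\big[\sigma^2(X_{(i-1)\Delta})-\sigma^2(x)\big]$ is the smoothing bias, and $\mathrm{(III)}(x)=\tfrac\Delta T\sum_iK_h(\breve X_{(i-1)\Delta}-x)R_i$. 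The $\sigma^2(x)[\hat p(x)-\pi(x)]$ term is handled by (b); $\mathrm{(II)}$, after the kernel replacement above, a change of variables and a second-order Taylor expansion (the first-order term vanishing by symmetry of $K$, the second-order term controlled by the hypotheses that $(\sigma^2\pi)''$ is bounded and $|\partial^2\sigma|=O(|\cdot|)$), is $O_P(h^2)$ plus a centered fluctuation of the type treated in (b); and $\mathrm{(III)}$, together with the discretization and truncation reductions made along the way, contributes the term $\Delta h^{-1/(1+q)}$ via the linear-growth bound on $R_i$ and the moment condition in Assumption \ref{assumption-stationary and mixing}(3). The term $\mathrm{(I)}$ then contributes the $\sqrt{(\log n)^3/(nh)}$ rate, so that, collecting all pieces, (c) holds; dividing by $\hat p(x)\ge c\,\delta_{n,T}$ yields (\ref{conclusionthmdiffusion}).

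The main obstacle is term $\mathrm{(I)}$, for two reasons. First, the ``time inconsistency'' noted in the introduction: since $\breve X_{(i+1)\Delta}-\breve X_{i\Delta}$ depends on the path of $X$ over $[(i-1)\Delta,(i+1)\Delta]$ while the kernel weight depends only on $\breve X_{(i-1)\Delta}$ — which is $\mathcal F_{(i-1)\Delta}$-measurable, so the summands of $\mathrm{(I)}$ are indeed conditionally centered — the centered summands for consecutive $i$ are not orthogonal, and $\hat\sigma^2$ cannot be recast, as $\tilde\sigma^2$ in (\ref{estimatorsigmainitial}) can, as a single stochastic integral amenable to It\^o's formula; I would restore an approximate martingale structure by an odd/even blocking and then invoke an exponential inequality for weakly dependent arrays, with the conditional variance of a summand of order $\Delta^2/h$ (hence $1/(nh)$ after normalization) and the truncation level calibrated against the covering number of $\{|x|\le\bar b_{n,T}\}$ and the mixing decay exponent $\breve\beta$ — which is exactly what the conditions on $q$, $\kappa$, $\theta$ and (\ref{betaconditiondiff}) are designed to make feasible. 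Second, the non-standard $(\log n)^3$ factor, in contrast to the $\log n$ of the continuous-observation case, arises because relating the time-averaged pseudo-observations to $X$ over the long horizon $T\to\infty$ must be done through the global modulus of continuity of $X$, whose size over an interval of length $T$ carries an extra logarithmic penalty; the precise accounting is given in Remark \ref{remarkdiffusion}.
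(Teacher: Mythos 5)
Your overall skeleton coincides with the paper's: the same ratio decomposition $\hat\sigma^2-\sigma^2=\bigl[(\hat L-\sigma^2\pi)-\sigma^2(\hat p-\pi)\bigr]/\hat p$, the same kernel-replacement step costing $\Delta^{\frac12-\frac1{2+q}}/h$ via the modulus of continuity (\ref{result-incrementop}) and the Lipschitz/compact-support properties of $K$ (this is exactly how the paper's terms $\bar H_1,\bar H_3$ are treated, via a dominating function $K^*$), the same covering-number plus mixing-exponential-inequality machinery for the centered kernel sums, the same $h^2$ bias from symmetry of $K$, and the same truncation at level $h^{-1/(q+1)}$ producing the $\Delta h^{-1/(1+q)}$ term. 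Where you genuinely diverge is in the stochastic term $\mathrm{(I)}$. The paper does \emph{not} condition on $\mathcal F_{(i-1)\Delta}$ and center; instead it expands $\Delta^2(\breve X_{(i+1)\Delta}-\breve X_{i\Delta})^2$ algebraically into $(Y_{(i+1)\Delta}-Y_{i\Delta})^2+(Y_{i\Delta}-Y_{(i-1)\Delta})^2-2(Y_{(i+1)\Delta}-Y_{i\Delta})(Y_{i\Delta}-Y_{(i-1)\Delta})$ and applies It\^o's formula to each piece, so that the fluctuation term becomes an explicit sum of iterated stochastic integrals; these are genuine continuous martingales, handled in Lemma \ref{lemmafydw} by the exponential inequality for continuous martingales combined with a Bernstein bound for the mixing array formed by the quadratic variation. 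Your alternative — conditional centering with respect to $\mathcal F_{(i-1)\Delta}$ (legitimate, since $\breve X_{(i-1)\Delta}$ is $\mathcal F_{(i-1)\Delta}$-measurable) followed by odd/even blocking to restore a martingale-difference structure and a discrete exponential inequality — is a viable substitute, and it buys a cleaner statement of \emph{why} the summands are centered without writing out the dozen iterated integrals the paper needs. What it costs you is that the entire difficulty migrates into two places you only gesture at: (i) the increments $\tfrac32(\breve X_{(i+1)\Delta}-\breve X_{i\Delta})^2-m_i$ are unbounded, so Freedman/Bernstein requires a truncation both on the modulus-of-continuity event for $\int\sigma\,dW$ (at level $\sqrt{\Delta}\log(1/\Delta)$, which is precisely the source of the extra $(\log n)^2$ in $a_{n,T}$ relative to the continuous-observation case — your diagnosis of this is correct) and on $|X_{(i-1)\Delta}|$ using the $(2+q)$-moment; and (ii) the predictable quadratic variation of each blocked sum is itself a kernel-weighted mixing array that must be bounded uniformly in $x$ by a second Bernstein argument before the martingale inequality can be applied on the good event. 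These two steps are where the paper spends essentially all of Lemma \ref{lemmafydw}, and your proposal would need the same amount of work there; also your clean bound $|R_i|\le C\Delta(1+|X_{(i-1)\Delta}|^2)$ on the conditional-expectation remainder needs a short SDE conditional-moment lemma rather than being read off directly. With those details supplied, your route closes.
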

\begin{remark}
\label{remarkdiffusion}
The convergence rate $a_{n,T}$ consists of three components: the discretization error, the variance-effect term, and the smoothing error. In estimating the diffusion coefficient of a continuous diffusion process, we need to handle a martingale term defined as follows,
\begin{equation*}
\widehat{M}_{T}(x):=\frac{1}{Th}\sum_{j=1}^{n-1}
K((X_{j\Delta}-x)/h)\int_{j\Delta}^{(j+1)\Delta}
\widehat{\rho}_{s,j\Delta}
\mathbb{I}_{\{\widehat{\rho}_{s,j\Delta}\leq\sqrt{\Delta}\log(1/\Delta)\}}dW_{s},
\end{equation*}
where $\widehat{\rho}_{s,j\Delta}$ is an adapted process. Typically,
$\widehat{\rho}_{s,j\Delta}$  corresponds to a term like
$\int_{j\Delta}^{s}f(X_{s})dW_{s}$. If $f$ is uniformly bounded, then for sufficiently small $\Delta$, we have $\mathbb{I}_{\{\widehat{\rho}_{s,j\Delta}\leq\sqrt{\Delta}\log(1/\Delta)\}} =1$ almost surely.  Using the proof of Lemma \ref{lemmafydw}, we obtain $\sup_{ x\in \mathbb{R}}|\widehat{M}_{T}(x)|=O_{P}(\sqrt{(\log{n})^{3}/(nh)})$ under appropriate conditions, which is the same as the convergence rate of the variance-effect term for the integrated diffusion process. Moreover, in this theorem, if the process has finite moments of all orders, then if
$\kappa=2/5$ and $h=((\log{n})^{3}/n)^{1/5}$, the convergence rate of the diffusion coefficient is $((\log{n})^{3}/n)^{2/5}$.
\end{remark}

\begin{theorem}
\label{thm-drift}
Assume that    \\
$(1)$ Assumptions $\ref{assumption-stationary and mixing}$ and $\ref{assumption-kernel and damping}$ are satisfied.
$b (\cdot)$ and $(b \pi)''(\cdot)$ are uniformly bounded.      \\
$(2)$ $\sup_{x}|\partial^{k}\pi(x)|<\infty$ for $k=0,1,2$, and
$\sup_{x,y}|\pi_{t,t+s}(x,y)|<\infty$.  \\
$(3)$ As $\Delta, h \to 0$ and $T\rightarrow \infty$,  $\Delta^{-1}=O(T^{\bar{\kappa}})$ for some $\bar{\kappa}>0$,
$(\log T)/(T^{\bar{\theta}}h) \to 0$ for $\bar{\theta}\in (0,1)$, $\Delta^{\frac{1}{2}-\frac{1}{2+q}}/h\to 0$, moreover, 
$ 1-(1+\frac{4}{q})\bar{\theta}-\frac{2\bar{\kappa}}{q}>0$. \\
Let $a^*_{n,T}=\Delta^{\frac{1}{2}-\frac{1}{2+q}} + \sqrt{(\log T)/(T^{\bar{\theta}}h)} + h^2$. If
\begin{equation}
\label{betaconditiondrift}
\breve{\beta}>\max\left\{
\frac{ \frac{3}{2}+\bar{\theta}+\bar{\kappa}}{ 1-\bar{\theta}(1+\frac{4}{q})-\frac{2\bar{\kappa}}{q}}-2,\
\frac{(2+\frac{1}{2+q})\bar{\theta}}{1-\bar{\theta}} \right\},
\end{equation}
and the conditions $a^*_{n,T}/\delta_{n,T}\to 0$ and  $\Delta^{\frac{1}{2}-\frac{1}{2+q}}/(h\delta_{n,T})\to 0$ hold, then as $\Delta, h \to 0$ and $n,T \to \infty$, we have,
\begin{equation}
\label{conclusionthmdrift}
\sup_{\vert x\vert\le \bar{b}_{n,T}} \left\vert\hat{b }(x)-b (x)\right\vert
=O_{P}\left(a^*_{n,T}/\delta_{n,T}\right).
\end{equation}
\end{theorem}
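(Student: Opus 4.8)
\medskip
\noindent\emph{Proof idea.}
The plan is to run the Nadaraya--Watson ratio decomposition, but to treat the discretization error, the growing estimation window $|x|\le\bar b_{n,T}$, and the overlap among the quantities entering the numerator all at once. Write the estimator \eqref{estimator-mu} as $\hat b(x)=\hat N(x)/\hat D(x)$, with $\hat D(x)=\frac{\Delta}{T}\sum_{i=1}^{n-1}K_h(\breve X_{(i-1)\Delta}-x)$ and $\hat N(x)=\frac1T\sum_{i=1}^{n-1}K_h(\breve X_{(i-1)\Delta}-x)(\breve X_{(i+1)\Delta}-\breve X_{i\Delta})$, so that
\[
\hat b(x)-b(x)=\frac{\hat N(x)-b(x)\hat D(x)}{\hat D(x)}.
\]
First I would bound the denominator from below. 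Since $\breve X_{(i-1)\Delta}$ is a $\Delta$-average of $X$, the a.s.\ modulus of continuity \eqref{result-incrementop}, Lipschitz continuity of $K$ (Assumption \ref{assumption-kernel and damping}), and an auxiliary lemma bounding the number of $i\le n$ with $K_h(\breve X_{(i-1)\Delta}-x)\ne0$ by $O_P(Th/\Delta)$ uniformly over $|x|\le\bar b_{n,T}$ give $\sup_{|x|\le\bar b_{n,T}}|\hat D(x)-\frac{\Delta}{T}\sum K_h(X_{(i-1)\Delta}-x)|=O_P(\Delta^{\frac12-\frac1{2+q}}/h)$. Combining this with the uniform convergence of the kernel density estimator built on $\{X_{(i-1)\Delta}\}$ --- obtained from $\mathbb E[\frac{\Delta}{T}\sum K_h(X_{(i-1)\Delta}-x)]=\pi(x)+O(h^2)$ (Taylor expansion, $\pi''$ bounded) plus a covering of $[-\bar b_{n,T},\bar b_{n,T}]$ and an exponential inequality for $\alpha$-mixing sequences, which is where the second bound in \eqref{betaconditiondrift} enters --- together with the hypotheses $a^{*}_{n,T}/\delta_{n,T}\to0$ and $\Delta^{\frac12-\frac1{2+q}}/(h\delta_{n,T})\to0$, I would conclude $\mathbb P(\inf_{|x|\le\bar b_{n,T}}\hat D(x)\ge\frac12\delta_{n,T})\to1$.

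For the numerator, the key is the representation, obtained by (stochastic) Fubini from \eqref{model-sde},
\[
\breve X_{(i+1)\Delta}-\breve X_{i\Delta}=\frac1\Delta\int_{(i-1)\Delta}^{(i+1)\Delta}\phi_i(u)\bigl(b(X_u)\,du+\sigma(X_u)\,dW_u\bigr),
\]
where $\phi_i$ is the piecewise-linear ``tent'' vanishing at $(i-1)\Delta$ and $(i+1)\Delta$, equal to $\Delta$ at $i\Delta$, with $\int\phi_i=\Delta^2$; thus $\frac1\Delta\int\phi_i(u)\,du\,b(x)=\Delta b(x)$ and $\mathbb E[\frac1\Delta\int\phi_i\sigma(X_u)dW_u\mid\mathcal F_{(i-1)\Delta}]=0$. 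Writing $b(X_u)-b(x)=(b(X_u)-b(X_{(i-1)\Delta}))+(b(X_{(i-1)\Delta})-b(x))$ decomposes $\hat N(x)-b(x)\hat D(x)$ into: (i) $L_1(x)=\frac{\Delta}{T}\sum K_h(\breve X_{(i-1)\Delta}-x)(b(X_{(i-1)\Delta})-b(x))$, whose mean (after the negligible replacement $\breve X_{(i-1)\Delta}\mapsto X_{(i-1)\Delta}$ in the kernel) equals $\int K_h(u-x)(b(u)-b(x))\pi(u)\,du=O(h^2)$ uniformly in $x$ --- the $O(h^2)$ constant involves $(b\pi)''-b\pi''=b''\pi+2b'\pi'$, which is bounded because $b$ is Lipschitz and $(b\pi)''$, $b\pi''$ are bounded --- and whose centered part is $o_P(a^{*}_{n,T})$ by the same covering/mixing argument; (ii) $L_2(x)=\frac1{\Delta T}\sum K_h(\breve X_{(i-1)\Delta}-x)\int\phi_i(u)(b(X_u)-b(X_{(i-1)\Delta}))\,du$, which by $b$ Lipschitz, \eqref{result-incrementop}, and the count of nonzero terms is $O_P(\Delta^{\frac12-\frac1{2+q}})$ uniformly; (iii) the stochastic term $L_3(x)=\frac1T\sum K_h(\breve X_{(i-1)\Delta}-x)\zeta_i$ with $\zeta_i=\frac1\Delta\int\phi_i\sigma(X_u)dW_u$. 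For $L_3$ I would keep $K_h(\breve X_{(i-1)\Delta}-x)$ as is (the naive replacement $\breve X\mapsto X$ is too crude here because of the $h^{-2}$ from differentiating $K_h$), use that this weight is $\mathcal F_{(i-1)\Delta}$-measurable while $\zeta_i$ has zero $\mathcal F_{(i-1)\Delta}$-conditional mean, split $\{1,\dots,n-1\}$ into a bounded number of subsequences along each of which the summands form a martingale difference sequence (this is forced by the ``time inconsistency'' of the introduction: $\breve X_{(i+1)\Delta}-\breve X_{i\Delta}$, $K_h(\breve X_{(i-1)\Delta}-x)$ and the windows of consecutive $\zeta_i$ overlap), observe that on the support of $K_h(\cdot-x)$ with $|x|\le\bar b_{n,T}$ the coefficient $\sigma$ is bounded (linear growth plus \eqref{result-incrementop}), so $\zeta_i$ is conditionally sub-Gaussian with variance proxy $O(\Delta(1+\bar b_{n,T})^2)$, and apply a Freedman/Bernstein-type inequality at each point of a fine grid in $[-\bar b_{n,T},\bar b_{n,T}]$, controlling oscillation between grid points via Lipschitz continuity of $K$. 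This gives $\sup_{|x|\le\bar b_{n,T}}|L_3(x)|=O_P(\sqrt{(\log T)/(T^{\bar\theta}h)})$, and this is the step consuming the conditions $(\log T)/(T^{\bar\theta}h)\to0$, $\Delta^{-1}=O(T^{\bar\kappa})$, $\Delta^{\frac12-\frac1{2+q}}/h\to0$, $1-(1+\frac4q)\bar\theta-\frac{2\bar\kappa}{q}>0$, and the first bound in \eqref{betaconditiondrift}; cf.\ the estimator \eqref{estimatormuinitial} and Lemma \ref{lemmafydw} for the analogous term in the continuously-observed model.

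Assembling, on the event $\{\inf_{|x|\le\bar b_{n,T}}\hat D(x)\ge\frac12\delta_{n,T}\}$, which has probability tending to $1$,
\[
\sup_{|x|\le\bar b_{n,T}}\bigl|\hat b(x)-b(x)\bigr|\le\frac{2}{\delta_{n,T}}\sup_{|x|\le\bar b_{n,T}}\bigl|L_1(x)+L_2(x)+L_3(x)\bigr|=O_P\!\bigl(a^{*}_{n,T}/\delta_{n,T}\bigr),
\]
which is \eqref{conclusionthmdrift}. The hard part will be the term $L_3$: controlling a stochastic fluctuation uniformly over a window $|x|\le\bar b_{n,T}$ that is permitted to grow, while (a) the summands are a martingale difference sequence only after the subsequence splitting, (b) the conditional variances and sub-Gaussian proxies carry a factor growing with $\bar b_{n,T}$, and (c) the discretization error is accessible only through \eqref{result-incrementop}, whose exponent is strictly below $\frac12-\frac1{2+q}$. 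Reconciling the size of the covering net (hence the $\log$-factor in the union bound), the effective sample size $T^{\bar\theta}$ produced by the mixing inequality, the growth rates of $\bar b_{n,T}$ and $\delta_{n,T}$, and the demand that the discretization error remain dominated is precisely what is encoded in the lower bound \eqref{betaconditiondrift} on $\breve\beta$ and in the joint constraints among $\bar\theta$, $\bar\kappa$ and $q$.
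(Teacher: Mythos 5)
Your architecture is the same as the paper's: write $\hat b=\hat N/\hat D$, lower-bound the denominator by $\tfrac12\delta_{n,T}$ uniformly on $|x|\le\bar b_{n,T}$ (uniform LLN for the kernel density estimator plus the $\Delta^{\frac12-\frac1{2+q}}/h$ cost of replacing $\breve X_{(i-1)\Delta}$ by $X_{(i-1)\Delta}$), and split the numerator into a smoothing bias $O(h^2)$, a drift-discretization error $O_P(\Delta^{\frac12-\frac1{2+q}})$, a centered kernel-weighted drift term, and a stochastic-integral term of order $\sqrt{(\log T)/(T^{\bar\theta}h)}$; these are exactly the paper's $\tilde A_{14}$, $\tilde A_{11}$, $\tilde A_{13}$, $\tilde A_{12}$ plus $\tilde A_2$. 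Two local choices differ. Your tent-function identity $\breve X_{(i+1)\Delta}-\breve X_{i\Delta}=\frac1\Delta\int_{(i-1)\Delta}^{(i+1)\Delta}\phi_i(u)\,dX_u$ is a correct and tidier repackaging of the paper's three iterated integrals (the weights $u-(i-1)\Delta$ and $(i+1)\Delta-u$ are exactly what Fubini produces), and your bias constant $b''\pi+2b'\pi'$ matches what the stated hypotheses on $(b\pi)''$ and $\pi''$ control. For the stochastic term, the paper first moves the kernel onto $X_{(i-1)\Delta}$ (absorbing the difference into $\tilde A_2$) and then invokes Lemma \ref{lemmafdwtmu}, whose engine is the covering-number bound of Lemma \ref{lemma-covering number} combined with the exponential inequality for continuous martingales and a Bernstein inequality for the mixing array controlling the quadratic variation; you instead keep $K_h(\breve X_{(i-1)\Delta}-x)$, split indices by parity to obtain genuine martingale-difference sequences, and apply Freedman on a grid. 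That is a legitimate alternative (and your observation that the naive kernel replacement is delicate for the stochastic piece is fair), but note that the quadratic variation of your martingale is itself a random sum whose deviation from its mean must be controlled by a mixing inequality — this is where the first lower bound in \eqref{betaconditiondrift} actually enters, and your sketch does not say where $\breve\beta$ is consumed in the $L_3$ step.

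The one concrete gap is the unboundedness of $\sigma$ in $\zeta_i$. Theorem \ref{thm-drift} does not assume $\sigma$ bounded, and you correctly compute that on the kernel's support the conditional variance proxy of $\zeta_i$ is $O(\Delta(1+\bar b_{n,T})^2)$; you then assert $\sup_{|x|\le\bar b_{n,T}}|L_3(x)|=O_P(\sqrt{(\log T)/(T^{\bar\theta}h)})$ with no $\bar b_{n,T}$ factor, and flag the reconciliation as ``the hard part'' without supplying the idea. A Freedman bound with that variance proxy yields an extra factor $(1+\bar b_{n,T})$ in the rate, which is not absorbed by $a^*_{n,T}$ since $\bar b_{n,T}\to\infty$ is arbitrary. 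The paper's device (visible in the $V_1$ term of Lemma \ref{lemmafydw} and implicit in Lemma \ref{lemmafdwtmu}) is a moment-based truncation: split on $\{|X_{i\Delta}|>\bar\psi_n\}$ versus its complement, bound the tail piece in $L^1$ by $\mathbb E[|X|^{2+q}]/\bar\psi_n^{q}$ via Markov, and choose $\bar\psi_n$ so that both pieces match the target rate — this is precisely how the exponent $q$ and the constraint $1-(1+\frac4q)\bar\theta-\frac{2\bar\kappa}{q}>0$ enter. Without this truncation step your bound on $L_3$ does not close, so you should add it explicitly before applying Freedman on the truncated piece.
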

\section{Auxiliary Lemmas}
\begin{lemma}[Lemma A.3 in \cite{kanaya2017uniform}]
\label{lemma-covering number}
Let $K(\cdot)$ $(: \mathbb{R} \rightarrow \mathbb{R})$ be a function of bounded variation, satisfying $\sup _{x \in \mathbb{R}}\vert K(x)\vert \leq \breve{K}$ for some $\breve{K}>0$. Denote $\mathbb{L}^{\breve{r}}(Q)$ as the space of functions $\breve{g}\left(: \mathbb{R} \rightarrow \mathbb{R}\right)$ satisfying $\left[\int \vert \breve{g}\vert^{\breve{r}} dQ\right]^{1/\breve{r}}<\infty$, where $\breve{r}\geq 1$ and $Q$ is a probability measure on $\mathbb{R}$. Consider the class of rescaled and translated versions of $K$:
\begin{equation*}
\mathcal{K}:=\left\{K\left(\left(\cdot-x\right)/h\right) \vert x \in \mathbb{R},\ \ h>0\right\}.
\end{equation*}
Then, for $\varepsilon \in(0,1)$, the covering number of $\mathcal{K}$ satisfies
\begin{equation*}
\sup _{Q} N\left(\varepsilon 8 \breve{K}, \mathcal{K}, \mathbb{L}^{\breve{r}}\left(Q\right)\right) \leq \Lambda \varepsilon^{-4 \breve{r}},\ \Lambda>0\ \text{is\  independent\ of}\ Q.
\end{equation*}
The quantity $ N\left(\varepsilon 8 \breve{K}, \mathcal{K}, \mathbb{L}^{\breve{r}}\left(Q\right)\right)$ denotes the minimal  number of $\varepsilon 8 \breve{K}$-balls in $\mathbb{L}^{\breve{r}}(Q)$ need to cover $\mathcal{K}$, where an $\varepsilon 8 \breve{K}$-ball centered at $\breve{g}\in \mathbb{L}^{\breve{r}}(Q)$ is defined as $\Big\{\check{f}\in \mathbb{L}^{\breve{r}}(Q)\big\vert\left[\int \vert\check{f}-\breve{g}\vert^{\breve{r}} dQ\right]^{\frac{1}{\breve{r}}}  <\varepsilon 8 \breve{K}\Big\}$.
\end{lemma}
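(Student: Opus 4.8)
The plan is to show that $\mathcal{K}$ is a VC-type (Euclidean) class, so that the stated polynomial bound on its covering number follows from the classical entropy estimate for classes with finite Vapnik--Chervonenkis (VC) index, uniformly over all probability measures $Q$. First, I would invoke the Jordan decomposition of a function of bounded variation: write $K=K_{+}-K_{-}$, where $K_{+}$ and $K_{-}$ are nondecreasing and bounded (each by $\tfrac12(\mathrm{TV}(K)+\breve{K})$, with $\mathrm{TV}(K)<\infty$). Each rescaled translate satisfies $K((\cdot-x)/h)=K_{+}((\cdot-x)/h)-K_{-}((\cdot-x)/h)$, so it suffices to control the two subclasses $\mathcal{G}_{\pm}=\{u\mapsto K_{\pm}((u-x)/h)\mid x\in\mathbb{R},\ h>0\}$ of rescaled translates of a fixed bounded nondecreasing function, and then recombine.

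Second, I would prove that for any bounded nondecreasing $g$ the class $\mathcal{G}=\{u\mapsto g((u-x)/h)\mid x\in\mathbb{R},\ h>0\}$ is VC-subgraph and shatters no $3$ points, an absolute bound. Fix points $(u_{1},t_{1}),\dots,(u_{m},t_{m})\in\mathbb{R}^{2}$. A point $(u_{i},t_{i})$ lies in the subgraph of $u\mapsto g((u-x)/h)$ iff $g((u_{i}-x)/h)>t_{i}$; since $g$ is nondecreasing, the level set $\{v:g(v)>t_{i}\}$ is a half-line (possibly empty or all of $\mathbb{R}$), so after discarding the trivial cases the membership condition reads $u_{i}-x-c_{i}h>0$ (or $\geq 0$), a linear inequality in the parameter $(x,h)\in\mathbb{R}\times(0,\infty)$ with slope $c_{i}$ determined by $t_{i}$. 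The $m$ lines $\{x+c_{i}h=u_{i}\}$ cut the parameter region into at most $1+m+\binom{m}{2}$ cells, so at most that many subsets of $\{(u_{i},t_{i})\}$ are realized; since $2^{m}>1+m+\binom{m}{2}$ for $m\geq 3$, no $3$ points are shattered. A constant envelope $F_{\pm}\equiv\|K_{\pm}\|_{\infty}$ serves for $\mathcal{G}_{\pm}$.

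Third, I would assemble the pieces. By the standard entropy bound for VC-subgraph classes, since no $3$ points are shattered there are absolute constants with $\sup_{Q}N(\eta,\mathcal{G}_{\pm},\mathbb{L}^{\breve{r}}(Q))\leq C\,(\|K_{\pm}\|_{\infty}/\eta)^{2\breve{r}}$ for $0<\eta<\|K_{\pm}\|_{\infty}$, the exponent $2\breve{r}$ coming from the VC index. The functions in $\mathcal{K}$ are the differences $g_{+}-g_{-}$ with $g_{\pm}\in\mathcal{G}_{\pm}$ sharing the same $(x,h)$; combining an $\eta$-net of $\mathcal{G}_{+}$ with an $\eta$-net of $\mathcal{G}_{-}$ gives, by the triangle inequality in $\mathbb{L}^{\breve{r}}(Q)$, a $2\eta$-net of $\mathcal{K}$, whence
\[
\sup_{Q}N(2\eta,\mathcal{K},\mathbb{L}^{\breve{r}}(Q))\leq \sup_{Q}N(\eta,\mathcal{G}_{+},\mathbb{L}^{\breve{r}}(Q))\cdot\sup_{Q}N(\eta,\mathcal{G}_{-},\mathbb{L}^{\breve{r}}(Q))\leq C^{2}\Big(M/\eta\Big)^{4\breve{r}},
\]
with $M:=\max(\|K_{+}\|_{\infty},\|K_{-}\|_{\infty})$. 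Setting $2\eta=8\breve{K}\varepsilon$ (so $\eta=4\breve{K}\varepsilon$) and absorbing the factor $(M/(4\breve{K}))^{4\breve{r}}$ into the constant yields $\sup_{Q}N(8\breve{K}\varepsilon,\mathcal{K},\mathbb{L}^{\breve{r}}(Q))\leq\Lambda\,\varepsilon^{-4\breve{r}}$ with $\Lambda$ depending only on $\mathrm{TV}(K)$, $\breve{K}$ and $\breve{r}$ but not on $Q$ --- as claimed.

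The main obstacle is the second step: handling cleanly the degenerate cases in the half-line description of the level sets of a monotone function (empty, full, and open versus closed endpoints) together with the boundary behaviour forced by $h>0$, so that the shattering count genuinely reduces to counting cells of an arrangement of $m$ lines, and keeping the resulting VC index small enough that the entropy exponent is at most $4\breve{r}$ after the difference construction. The remaining steps are routine applications of standard empirical-process results.
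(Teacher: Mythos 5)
Your proposal is correct, but note that the paper itself gives no proof of this statement: it is imported verbatim as Lemma A.3 of the cited Kanaya (2017) reference, whose proof rests on exactly the argument you outline (Jordan decomposition of the bounded-variation kernel into monotone pieces, a VC-subgraph bound for rescaled translates of a monotone function, the uniform entropy inequality for VC classes, and recombination of the two nets). The only load-bearing detail is the one you already flag --- the shattering count for the arrangement of lines in the $(x,h)$-parameter half-plane must yield VC dimension $2$ per monotone piece, including the degenerate and boundary cases, since a larger VC dimension would inflate the exponent beyond the stated $4\breve{r}$ --- and your cell-counting argument does deliver this.
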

\begin{lemma}[$(15)$ in \cite{kanaya2017uniform}]
\label{lemmarhodwasbound}
Let $\mathbb{P}[\Omega^*]=1$, and for each $\omega \in \Omega^*$, there exists some $\breve{\Delta}>0$ such that for any $\Delta \in[0, \breve{\Delta}]$,
\begin{equation*}
\sup _{|t_{2}-t_{1}| \in[0, \Delta];t_{1}, t_{2} \in[0, \infty)}\left|\int_{t_{1}}^{t_{2}} \breve{\rho}_u d W_u\right| \leq \sqrt{2 \Delta \log (1 / \Delta)} \sqrt{\max \left\{1, \sup _{u \in[0, \infty)} \breve{\rho}_{u}^2\right\}},
\end{equation*}
where $\{\breve{\rho}_u\}$ is uniformly bounded over $u \in[0, \infty)$, and $\sup _{u \in[0, \infty)} \breve{\rho}_{u}^2>0$. Additionally, the process $\{M_s\}_{s \geq 0}$ defined as $M_s:=\int_0^s \breve{\rho}_u d W_u$ is well-posed.
\end{lemma}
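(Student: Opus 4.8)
This lemma supplies the global, infinite-horizon modulus of continuity for It\^{o} integrals with uniformly bounded integrands that is used to bound the discretization errors in Theorems~\ref{thm-diffusion} and~\ref{thm-drift}; it is \cite[eq.~(15)]{kanaya2017uniform}, and the plan is to realize $M$ as a time-changed Brownian motion and transfer L\'{e}vy's classical modulus of continuity. Write $\bar{\rho}^{2}:=\sup_{u\geq 0}\breve{\rho}_{u}^{2}\in(0,\infty)$. Since $\{\breve{\rho}_{u}\}$ is progressively measurable and uniformly bounded, $M_{s}=\int_{0}^{s}\breve{\rho}_{u}\,dW_{u}$ is a continuous local martingale whose quadratic variation $\langle M\rangle_{s}=\int_{0}^{s}\breve{\rho}_{u}^{2}\,du$ is absolutely continuous and Lipschitz in time, namely $0\leq\langle M\rangle_{t_{2}}-\langle M\rangle_{t_{1}}\leq\bar{\rho}^{2}(t_{2}-t_{1})$ for $t_{1}\leq t_{2}$. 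Well-posedness of $\{M_{s}\}_{s\geq 0}$ forces $\langle M\rangle_{\infty}<\infty$ almost surely (in the applications of the lemma, cf.\ Remark~\ref{remarkdiffusion}, $\breve{\rho}$ is supported on a bounded time window, so this holds automatically), whence $M$ is an $L^{2}$-bounded martingale.

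Next I would apply the Dambis--Dubins--Schwarz representation: on a full-measure event $\Omega^{*}$, after enlarging the probability space if necessary, there is a standard Brownian motion $\{B_{v}\}_{v\geq 0}$ with $M_{s}=B_{\langle M\rangle_{s}}$ for every $s\geq 0$. Hence, whenever $|t_{2}-t_{1}|\leq\Delta$,
\[
\Bigl|\int_{t_{1}}^{t_{2}}\breve{\rho}_{u}\,dW_{u}\Bigr|=\bigl|B_{\langle M\rangle_{t_{2}}}-B_{\langle M\rangle_{t_{1}}}\bigr|,\qquad\bigl|\langle M\rangle_{t_{2}}-\langle M\rangle_{t_{1}}\bigr|\leq\bar{\rho}^{2}\Delta\leq\max\{1,\bar{\rho}^{2}\}\,\Delta,
\]
so increments of the stochastic integral over time gaps of length $\leq\Delta$ are controlled by increments of $B$ over clock gaps of length $\leq\max\{1,\bar{\rho}^{2}\}\Delta$.

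The third step is to invoke L\'{e}vy's modulus of continuity for $B$ on the compact clock interval $[0,\langle M\rangle_{\infty}]$: almost surely there is $\delta_{0}>0$ such that $\sup_{|u-v|\leq\delta}|B_{u}-B_{v}|\leq\sqrt{2\delta\log(1/\delta)}$ for all $0<\delta\leq\delta_{0}$ (up to the asymptotically negligible factor in L\'{e}vy's theorem, which is absorbed into a matching generic constant, as in the cited reference). Taking $\delta=\max\{1,\bar{\rho}^{2}\}\Delta$, using that $t\mapsto\sqrt{2t\log(1/t)}$ is increasing near the origin and that $\log\bigl(1/(\max\{1,\bar{\rho}^{2}\}\Delta)\bigr)\leq\log(1/\Delta)$ once $\Delta$ is small, and setting $\breve{\Delta}:=\delta_{0}/\max\{1,\bar{\rho}^{2}\}$, one obtains $\sup_{|t_{2}-t_{1}|\leq\Delta}\bigl|\int_{t_{1}}^{t_{2}}\breve{\rho}_{u}\,dW_{u}\bigr|\leq\sqrt{2\Delta\log(1/\Delta)}\,\sqrt{\max\{1,\bar{\rho}^{2}\}}$ for every $\Delta\in(0,\breve{\Delta}]$, which is the asserted bound.

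I expect the unbounded time horizon $t_{1},t_{2}\in[0,\infty)$ to be the main obstacle. For a genuinely unbounded clock $\langle M\rangle_{\infty}=\infty$ (e.g.\ $\breve{\rho}\equiv 1$, where the supremum of increments over gaps $\leq\Delta$ is $+\infty$) the statement can hold only because well-posedness makes $\langle M\rangle_{\infty}$ finite, which is precisely what reduces the problem to Brownian motion on a compact clock interval; alternatively, over a slowly growing window $[0,\bar{T}_{n}]$ one would run a Borel--Cantelli argument along a dyadic sequence of $\Delta$'s together with the all-order moment bounds $\mathbb{E}|M_{t_{2}}-M_{t_{1}}|^{p}\leq C_{p}(\bar{\rho}^{2}|t_{2}-t_{1}|)^{p/2}$ coming from the Burkholder--Davis--Gundy inequality, in the spirit of Lemma~A.1 of \cite{kanaya2016estimation}. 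The remaining delicate point is pinning down the exact constant $\sqrt{2}$, which is handled by the monotonicity and absorption step above.
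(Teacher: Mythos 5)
The paper does not prove this lemma at all: it is imported verbatim as equation (15) of \cite{kanaya2017uniform}, so there is no in-paper argument to compare against, and your attempt has to stand on its own. Its skeleton (Dambis--Dubins--Schwarz plus L\'evy's modulus of continuity on the transformed clock) is a reasonable way to attack such a bound, and your bookkeeping with $\delta=\max\{1,\bar\rho^2\}\Delta$ and the absorption of $\log\max\{1,\bar\rho^2\}$ is fine as far as it goes.

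The genuine gap is the step ``well-posedness of $\{M_s\}_{s\ge 0}$ forces $\langle M\rangle_\infty<\infty$ almost surely.'' That is false: well-posedness only requires $\int_0^s\breve\rho_u^2\,du<\infty$ for each finite $s$, which is automatic from boundedness, and for $\breve\rho\equiv 1$ the process $M=W$ is perfectly well-posed with $\langle M\rangle_\infty=\infty$. Since your entire reduction to L\'evy's modulus on a \emph{compact} clock interval hinges on $\langle M\rangle_\infty<\infty$, the primary argument collapses exactly in the regime the paper needs: in Lemma~\ref{lemmafydw} the lemma is applied with $\breve\rho_u=\sigma(X_u)$ (or the $\tilde\rho_{t,i}$'s) over $[0,T]$ with $T=n\Delta\to\infty$, so the relevant quadratic variation grows without bound, and your side remark that ``$\breve\rho$ is supported on a bounded time window'' does not describe the actual use. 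As you yourself observe, the supremum over $t_1,t_2\in[0,\infty)$ with $\breve\rho\equiv 1$ is $+\infty$, so the statement can only be meant over a window $[0,T]$ with $T$ and $\Delta$ linked (here $\Delta^{-1}=O(n^{\kappa})$, so $\log(T/\Delta)\asymp\log(1/\Delta)$ and the extra logarithm is harmless against the threshold $\sqrt{\Delta}\log(1/\Delta)$ in the indicators $e_{\Delta,i}$). The correct route is the one you relegate to your final paragraph: a Borel--Cantelli argument over a dyadic grid on the growing window, fed by the Burkholder--Davis--Gundy moment bounds $\mathbb{E}|M_{t_2}-M_{t_1}|^p\le C_p(\bar\rho^2|t_2-t_1|)^{p/2}$, in the spirit of Lemma~A.1 of \cite{kanaya2016estimation}. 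That argument should be promoted to the main proof (with the statement restated over $[0,T]$ under a rate condition on $T$ versus $\Delta$), and the DDS/L\'evy reduction either dropped or explicitly conditioned on $\langle M\rangle_\infty<\infty$.
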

\begin{lemma}
\label{lemmafydw}
Assume that  Assumption $\ref{assumption-stationary and mixing}$ and condition $(1)$ of  Assumption  $\ref{assumption-kernel and damping}$  hold. $\sigma$ is uniformly bounded and 
$\sup_{x}|\pi(x)|<\infty$. As $\Delta, h \to 0$ and $n\rightarrow \infty$, we have the following conditions: 
$\Delta^{-1}=O(n^{\kappa})$ for some constant $\kappa>0$, and $(\log n)/(n^{\theta}h) \to 0$ for $\theta\in (0,1)$. Additionally, 
$q\geq \frac{2(1+\theta-\kappa)}{\kappa}$.
If
\begin{flalign*}
\breve{\beta}>\frac{2+3\theta}{1-\theta-\kappa},
\end{flalign*}
then as $n,T \to \infty$ and $\Delta, h \to 0$, we obtain,
\begin{flalign*}
&\sup\limits_{x\in \mathbb{R}}
\bigg|\frac{1}{T\Delta} \sum_{i=1}^{n-1} K_{h}\left(X_{i\Delta}-x\right)
\int_{i\Delta}^{(i+1)\Delta}\left(
\sigma(X_{t})(Y_{t}-Y_{i\Delta})-\sigma(X_{t})\int_{i\Delta}^{t}X_{i\Delta}ds\right)dW_{t}\bigg|
=O_{P}(\sqrt{(\log n)^{3}/(nh)}).
\end{flalign*}
\end{lemma}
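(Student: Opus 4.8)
\emph{Proof strategy.} The plan is to exploit the martingale structure in $W$ and lift a pointwise exponential estimate to a uniform one over $x$ by the covering number of Lemma~\ref{lemma-covering number}. Fix $x$ and abbreviate $\rho_{t,i}:=\sigma(X_t)\bigl((Y_t-Y_{i\Delta})-X_{i\Delta}(t-i\Delta)\bigr)=\sigma(X_t)\int_{i\Delta}^t(X_s-X_{i\Delta})\,ds$ and $\psi_t(x):=\sum_{i=1}^{n-1}K_h(X_{i\Delta}-x)\,\rho_{t,i}\,\mathbb{I}_{[i\Delta,(i+1)\Delta)}(t)$, so that the quantity to be bounded is $M_T(x):=\frac{1}{T\Delta}\int_0^T\psi_t(x)\,dW_t$. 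Since $\rho_{t,i}$ is $\mathcal F_t$-adapted on $[i\Delta,(i+1)\Delta]$ and $K_h(X_{i\Delta}-x)$ is $\mathcal F_{i\Delta}$-measurable, $s\mapsto M_s(x)$ is, for each $x$, a continuous local martingale with quadratic variation $\langle M(x)\rangle_T=\frac{1}{T^2\Delta^2}\sum_{i=1}^{n-1}K_h(X_{i\Delta}-x)^2\int_{i\Delta}^{(i+1)\Delta}\rho_{t,i}^2\,dt$.

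The first and decisive step is a sharp almost-sure bound on the integrand. Writing $X_s-X_{i\Delta}=\int_{i\Delta}^s b(X_u)\,du+\int_{i\Delta}^s\sigma(X_u)\,dW_u$, Lemma~\ref{lemmarhodwasbound} applied with $\breve\rho_u=\sigma(X_u)$ (uniformly bounded, by hypothesis) gives, on a probability-one event and for $\Delta$ small, $\sup_i\sup_{s\in[i\Delta,(i+1)\Delta]}\bigl|\int_{i\Delta}^s\sigma(X_u)\,dW_u\bigr|\le\sqrt{2\Delta\log(1/\Delta)}\,\max\{1,\|\sigma\|_\infty\}=:\gamma_n$, while the linear growth of $b$ together with (\ref{result-incrementop}) gives $\sup_i\sup_{s\in[i\Delta,(i+1)\Delta]}\bigl|\int_{i\Delta}^s b(X_u)\,du\bigr|=O\bigl(\Delta(1+|X_{i\Delta}|)\bigr)$. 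Hence $\sup_i\sup_{t\in[i\Delta,(i+1)\Delta]}|\rho_{t,i}|=O\bigl(\Delta\gamma_n+\Delta^2(1+|X_{i\Delta}|)\bigr)$; the essential point is that the stochastic contribution is of order $\Delta^{3/2}(\log(1/\Delta))^{1/2}$, which is far smaller than the bound $\Delta^{1+r}$ supplied by (\ref{result-incrementop}) alone — the latter is too crude to deliver the asserted rate. Substituting, $\int_{i\Delta}^{(i+1)\Delta}\rho_{t,i}^2\,dt=O\bigl(\Delta^4\log(1/\Delta)+\Delta^5(1+|X_{i\Delta}|)^2\bigr)$ on this event.

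Next I bound $\langle M(x)\rangle_T$ uniformly in $x$. Since $K$ is compactly supported, $K_h(X_{i\Delta}-x)\ne0$ forces $|X_{i\Delta}|\ge|x|-\bar{c}_K h$, so $K_h(X_{i\Delta}-x)^2(1+|X_{i\Delta}|)^2=O\bigl((1+|x|)^2K_h(X_{i\Delta}-x)^2\bigr)$; combined with $K_h(\cdot)^2\le h^{-1}\|K\|_\infty|K_h(\cdot)|$ and a uniform law of large numbers for $\frac{\Delta}{T}\sum_iK_h(X_{i\Delta}-x)$ (obtainable from $\sup_x\pi(x)<\infty$, Assumption~\ref{assumption-stationary and mixing}, and the same covering-plus-exponential machinery used below, costing at most a logarithmic factor), this yields, on a high-probability event, $\sup_{|x|\le\bar{b}_{n,T}}\langle M(x)\rangle_T=O_P\bigl((\log n)^2/(nh)\bigr)$, using $T=n\Delta$ and $\log(1/\Delta)=O(\log n)$ since $\Delta^{-1}=O(n^\kappa)$. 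The reduction from $\sup_{x\in\mathbb R}$ to $\sup_{|x|\le\bar{b}_{n,T}}$ is legitimate because, by compact support of $K$, $\sup_{|x|>\bar{b}_{n,T}}|M_T(x)|$ involves only indices $i$ with $|X_{i\Delta}|$ large, and the number and magnitude of such outliers are controlled by stationarity, $\mathbb E|X_0|^{2+q}<\infty$, the polynomial mixing $\alpha(s)\le\breve A s^{-\breve\beta}$, and the exponent constraints $q\ge2(1+\theta-\kappa)/\kappa$ and $\breve\beta>(2+3\theta)/(1-\theta-\kappa)$, making this tail $o_P(\sqrt{(\log n)^3/(nh)})$.

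Finally I uniformize over $\{|x|\le\bar{b}_{n,T}\}$. By the Dambis--Dubins--Schwarz time change, each $M_\bullet(x)$ is a time-changed Brownian motion, so a standard localization at $\inf\{s:\langle M(x)\rangle_s>v\}$ gives $\mathbb P\bigl(|M_T(x)|>\lambda,\ \langle M(x)\rangle_T\le v\bigr)\le2\exp(-\lambda^2/(2v))$. Taking $v\asymp(\log n)^2/(nh)$ and $\lambda\asymp\sqrt{(\log n)^3/(nh)}$, and a minimal $\varepsilon$-net of $\{|x|\le\bar{b}_{n,T}\}$ — which, by Lemma~\ref{lemma-covering number} applied to $\mathcal K=\{K((\cdot-x)/h)\}$ through the empirical measure of $\{X_{i\Delta}\}$ and the at-most-polynomial growth of $\bar{b}_{n,T}$, can be chosen of cardinality polynomial in $n$ once $\varepsilon$ is a small negative power of $n$ — a union bound controls the net, while the Lipschitz continuity of $K$ bounds $|M_T(x)-M_T(x')|$ by $O\bigl(h^{-2}|x-x'|\cdot\sup_{i,t}|\rho_{t,i}|\cdot n\bigr)$, which is negligible for $|x-x'|\le\varepsilon$. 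This yields $\sup_{|x|\le\bar{b}_{n,T}}|M_T(x)|=O_P(\sqrt{(\log n)^3/(nh)})$, which with the tail estimate above completes the argument. I expect the main obstacle to be the third step: obtaining a bound on $\langle M(x)\rangle_T$ that is simultaneously uniform over the unbounded state space and tight enough in its power of $\log n$ to produce exactly the stated rate after the union bound. This is precisely where the refined Itô-integral control of Lemma~\ref{lemmarhodwasbound} (rather than the cruder (\ref{result-incrementop})), the moment condition on $X_t$, and the polynomial mixing rate must be combined, and is why the precise exponent conditions enter the hypotheses.
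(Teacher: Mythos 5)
Your overall architecture---an almost-sure bound on the integrand via Lemma \ref{lemmarhodwasbound}, the exponential inequality for continuous (local) martingales, and a covering/union-bound step to make the estimate uniform in $x$---is the same as the paper's, and your bookkeeping of the powers of $\log n$ is consistent with the stated rate. However, two steps as written would not go through. First, you invoke the compact support and the Lipschitz continuity of $K$ (to replace $|X_{i\Delta}|$ by $|x|$ inside the quadratic variation, and to control $|M_T(x)-M_T(x')|$ over a Euclidean net). Those are condition $(2)$ of Assumption \ref{assumption-kernel and damping}, whereas the lemma assumes only condition $(1)$. The paper avoids both by using Lemma \ref{lemma-covering number}: the class of bounded-variation translates $\{K((\cdot-x)/h)\}$ is covered in $\mathbb{L}^{\breve r}(Q)$ for the empirical measure of $\{X_{i\Delta}\}$, and the within-ball discrepancy ($\mathcal{V}_{21}$) is controlled by H\"older and BDG, so neither Lipschitzness nor compact support of $K$ is needed.

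Second, and more substantively, the unbounded state space. The drift contribution to your $\rho_{t,i}$ is of order $\Delta^{2}(1+|X_{i\Delta}|)$, which is not uniformly negligible, and the lemma's supremum is over all of $\mathbb{R}$ ($\bar b_{n,T}$ is not a parameter of this lemma); your plan to restrict to $|x|\le\bar b_{n,T}$ and dispose of the tail by appeal to ``stationarity and mixing'' leaves the decisive computation undone. The paper instead truncates at $|X_{i\Delta}|\le\bar\psi_n$ with $\bar\psi_n=\big(n\Delta/(h(\log n)^{3})\big)^{1/q}$, bounds the complementary piece $V_1$ by H\"older, BDG and Markov using $\mathbb{E}|X_t|^{2+q}<\infty$, and then checks $\Delta\bar\psi_n\le\sqrt{\Delta}\log n$ so that on the truncated event the drift term is dominated by the stochastic one; this verification is exactly where the hypothesis $q\ge 2(1+\theta-\kappa)/\kappa$ enters, and your sketch never uses it. Relatedly, the threshold $\breve\beta>(2+3\theta)/(1-\theta-\kappa)$ arises in the paper from a Bernstein inequality for strong-mixing arrays applied to the centered quadratic-variation increments $Z_{n,i}$ with block length $m=nh/\log n$; you push the mixing requirement into an auxiliary uniform law of large numbers for the kernel sum, which is a workable alternative but must itself be established (with its own covering and mixing argument) before the exponential bound conditional on $\langle M(x)\rangle_T\le v$ can be applied.
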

\begin{proof}
Using triangle inequality, we have,
\begin{flalign*}
&\sup\limits_{x\in \mathbb{R}}
\bigg|\frac{1}{T\Delta} \sum_{i=1}^{n-1} K_{h}\left(X_{i\Delta}-x\right)
\int_{i\Delta}^{(i+1)\Delta}\left(
\sigma(X_{t})(Y_{t}-Y_{i\Delta})-\sigma(X_{t})\int_{i\Delta}^{t}X_{i\Delta}ds\right)dW_{t}\bigg|  \\
&\leq\sup\limits_{x\in \mathbb{R}}
\bigg|\frac{1}{T} \sum_{i=1}^{n-1} K_{h}\left(X_{i\Delta}-x\right)
\int_{i\Delta}^{(i+1)\Delta}\rho_{t,i}dW_{t}\bigg|
+\sup\limits_{x\in \mathbb{R}}
\bigg|\frac{1}{T} \sum_{i=1}^{n-1} K_{h}\left(X_{i\Delta}-x\right)
\int_{i\Delta}^{(i+1)\Delta}\tilde{\rho}_{t,i}dW_{t}\bigg| \\
&\quad+\sup\limits_{x\in \mathbb{R}}
\bigg|\frac{1}{T} \sum_{i=1}^{n-1} K_{h}\left(X_{i\Delta}-x\right)
\int_{i\Delta}^{(i+1)\Delta}\tilde{\rho}^{(1)}_{t,i}dW_{t}\bigg|
+\sup\limits_{x\in \mathbb{R}}
\bigg|\frac{1}{T} \sum_{i=1}^{n-1} K_{h}\left(X_{i\Delta}-x\right)
\int_{i\Delta}^{(i+1)\Delta}\bar{\rho}_{t,i}dW_{t}\bigg|        \\
&:=V_{1}+V_{2}+V_{3}+V_{4},
\end{flalign*}
where
\begin{flalign*}
\rho_{t,i}&:=\frac{1}{\Delta}\sigma(X_{t})
\left[\int_{i\Delta}^{t}\int_{i\Delta}^{s}
b (X_{u})\mathbb{I}_{\{|X_{i\Delta}|>\bar{\psi}_{n}\}}duds\right] \bar{e}_{\Delta,i},  \\
\tilde{\rho}_{t,i}&:=\frac{1}{\Delta}\sigma(X_{t})
\left[\int_{i\Delta}^{t}  \left(\int_{i\Delta}^{s}
b (X_{u})\mathbb{I}_{\{|X_{i\Delta}|\leq \bar{\psi}_{n}\}}du
+\int_{i\Delta}^{s}\sigma(X_{u})dW_{u}\right)e_{\Delta,i}
ds\right]\bar{e}_{\Delta,i}  ,    \quad t\in[i\Delta,(i+1)\Delta],      \\
\tilde{\rho}^{(1)}_{t,i}&:=\frac{1}{\Delta}\sigma(X_{t})
\left[\int_{i\Delta}^{t}  \left(\int_{i\Delta}^{s}
b (X_{u})\mathbb{I}_{\{|X_{i\Delta}|\leq \bar{\psi}_{n}\}}du
+\int_{i\Delta}^{s}\sigma(X_{u})dW_{u}\right)(1-e_{\Delta,i})
ds\right] \bar{e}_{\Delta,i},         \\
\bar{\rho}_{t,i}&:=
\left(\rho_{t,i}+\tilde{\rho}_{t,i}+\tilde{\rho}^{(1)}_{t,i}\right)(1-\bar{e}_{\Delta,i}). \\
e_{\Delta,i}&:=\mathbb{I}_{\left\{\sup_{s\in[i\Delta,(i+1)\Delta]}
\left|\int_{i\Delta}^{s}\sigma(X_{u})dW_{u}\right|\leq \sqrt{\Delta}\log(1/\Delta)\right\}},       \quad
\bar{e}_{\Delta,i}:=\mathbb{I}_{\left\{\sup_{s\in[i\Delta,(i+1)\Delta]}
\left|X_{s}-X_{i\Delta}\right|\leq 1\right\}},
\end{flalign*}
where $\bar{\psi}_{n}$ is a sequence of positive numbers.
According to Lemma $\ref{lemmarhodwasbound}$ and $(\ref{result-incrementop})$, $V_{3}=0$ and $V_{4}=0$ almost surely for sufficiently small $\Delta$.
Using H{\"o}lder and BDG inequalities, we have,
\begin{flalign*}
\mathbb{E}\left[V_{1}\right]
&\leq \frac{\breve{K}(n-1)}{Th} \left[\frac{1}{n-1}\sum_{i=1}^{n-1}
\mathbb{E}\left[\left(\int_{i\Delta}^{(i+1)\Delta}
\rho_{t,i} dW_{t}\right)^{2}\right]\right]^{1/2} \\
&\leq \frac{\breve{K}(n-1)}{Th}\left[\mathbb{E}\left[\int_{i\Delta}^{(i+1)\Delta}
\rho^{2}_{t,i} dt\right]\right]^{1/2}    \\
&\leq \frac{\breve{K}\Delta^{3/2}}{\Delta h} \left(\frac{\mathbb{E}[|X_{i\Delta}|^{2+q}]}{\bar{\psi}_{n}^{q}}\right)^{1/2}
=O\left(\frac{\Delta^{1/2}}{h\bar{\psi}_{n}^{q/2}}\right),
\end{flalign*}
the third inequality follows from
\begin{flalign*}
|b (X_{u})|\mathbb{I}_{\{|X_{i\Delta}|>\bar{\psi}_{n}\}}\bar{e}_{\Delta,i}
=\left|b (X_{u})-b (X_{i\Delta})+b (X_{i\Delta})\right|
\mathbb{I}_{\{|X_{i\Delta}|>\bar{\psi}_{n}\}}\bar{e}_{\Delta,i},
\end{flalign*}
according to the Lipschitz property of $b $, $|b (X_{u})-b (X_{i\Delta})|\bar{e}_{\Delta,i}\leq C$ almost surely for sufficiently small $\Delta$. Note that $h=O(n^{-\theta}\log n)$ and $\Delta=O(n^{-\kappa})$.
set
$\bar{\psi}_{n}=\left(\frac{n\Delta }{h(\log n)^{3}}\right)^{1/q}$, we have
$V_{1}=O_{P}(\sqrt{(\log n)^{3}/(nh)})$.
Moreover, if
$q\geq \frac{2(1+\theta-\kappa)}{\kappa}$,  we have
$\Delta \bar{\psi}_{n}\leq \sqrt{\Delta}\log n$ as $\Delta, h\to 0$ and $n\to\infty$.
To find the rate of $V_{2}$, for each $h>0$, we define the function class
$\mathcal{K}(h)=\left\{K((\cdot-x)/h)\vert x\in \mathbb{R}\right\}$. With  Lemma \ref{lemma-covering number}, we can cover $\mathcal{K}(h)$ with a finite collection   $\left\{\mathcal{K}_{k}(h)\right\}_{k=1}^{\breve{v}(h)}$ such that: $\mathcal{K}(h)\subset \bigcup_{k=1}^{\breve{v}(h)}\mathcal{K}_k(h)$; each  $\mathcal{K}_k(h)$  is centered at   $\breve{g}_{k}\left(\cdot\right)=K\left((\cdot-x_k)/h\right)$;  for any $\varepsilon\in(0,1)$, $\breve{g}\in \mathcal{K}_k(h)$, and any probability measure $Q$,
\begin{equation}
\label{proof-covering-number}
\int\vert \breve{g}-\breve{g}_{k}\vert dQ\leq \varepsilon 8\breve{K},\ \ 
\breve{v}(h)\leq \Lambda \varepsilon^{-4}.
\end{equation}
Then, $V_{2}$ can be bounded by,
\begin{flalign*}
V_{2}&\leq \max_{k\in \{1,\cdots, \breve{v}(h)\}}\sup_{\breve{g}\in \mathcal{K}_k(h)}\left\vert \frac{1}{Th} \sum_{i=1}^{n-1}\left[\breve{g}_{k}(X_{i\Delta})-\breve{g}(X_{i\Delta})\right]
\int_{i\Delta}^{(i+1) \Delta}  \tilde{\rho}_{t,i} dW_{t}\right\vert  \\
& \quad
+\max_{k\in \{1,\cdots, \breve{v}(h)\}}\left\vert\frac{1}{Th}\sum_{i=1}^{n-1} \breve{g}_{k}(X_{i\Delta})\int_{i\Delta}^{(i+1) \Delta} \tilde{\rho}_{t,i}dW_{t} \right\vert  \\
&=: \mathcal{V}_{21} + \mathcal{V}_{22}.
\end{flalign*}
For $\mathcal{V}_{21}$, it is easy to obtain that,
\begin{flalign*}
\mathcal{V}_{21}
&\leq
\frac{n-1}{Th}\left\{\max_{k\in \{1,\cdots, \breve{v}(h)\}}\sup_{g\in \mathcal{K}_k(h)} \frac{1}{n-1} \sum_{i=1}^{n-1}\left|\breve{g}_{k}(X_{i\Delta})-\breve{g}(X_{i\Delta})\right|^{2}\right\}^{\frac{1}{2}}    
\left\{\frac{1}{n-1}\sum_{i=1}^{n-1}
\left\vert\int_{i\Delta}^{(i+1) \Delta}  \tilde{\rho}_{t,i} dW_{t}\right\vert^{2}\right\}^{\frac{1}{2}}   \\
&\leq \frac{n-1}{Th} \varepsilon 8\breve{K}\times O_{P}(\Delta\log(1/\Delta))    \\
&= O_{P}\left(\frac{\log(1/\Delta) \varepsilon }{h}\right) ,
\end{flalign*}
where the first and second inequalities follow from the H{\"o}lder and BDG inequalities, respectively.
Let
\begin{flalign*}
\varepsilon=\sqrt{h\log n/n} ,
\end{flalign*}
we can get $\mathcal{V}_{21}=O_{P}(\sqrt{(\log n)^{3}/(nh)})$.
For $\mathcal{V}_{22}$, define
\begin{flalign*}
\check{M}_{T}:=\sum_{i=1}^{n-1} \breve{g}_{k}(X_{i\Delta})\int_{i\Delta}^{(i+1) \Delta} \tilde{\rho}_{t,i} dW_{t},
\end{flalign*}
note that $\check{M}_{T}$ can be represented as a continuous martingale with quadratic variation
\begin{flalign*}
\langle \check{M} \rangle_{T}=\sum_{i=1}^{n-1} \breve{g}_{k}^{2}(X_{i\Delta}) \int_{i\Delta}^{(i+1) \Delta}\tilde{\rho}_{t,i}^{2}dt.
\end{flalign*}
Denote
\begin{flalign*}
Z_{n,i}(x_k,h)
:=\breve{g}_{k}^{2}(X_{i\Delta}) \int_{i\Delta}^{(i+1) \Delta}\tilde{\rho}_{t,i}^{2}dt
-\mathbb{E}\left[\breve{g}_{k}^{2}(X_{i\Delta}) \int_{i\Delta}^{(i+1) \Delta}\tilde{\rho}_{t,i}^{2}dt\right].
\end{flalign*}
According to the boundedness of $K$, we have,
\begin{flalign*}
|Z_{n,i}(x_k,h)|&\leq C_{1}\Delta^{2}(\log(1/\Delta))^{2}, \\
\mathbb{E}\left[\left(\sum_{i=1}^{m}Z_{n,i}(x_k,h)\right)^{2}\right]&\leq C_{2}m^{2}\Delta^{4}(\log(1/\Delta))^{4}h, \quad 0<m\leq (n-1),
\end{flalign*}
where $C_{1}$, $C_{2}>0$.
Note that
\begin{flalign*}
\langle \check{M} \rangle_{T}
&=\sum_{i=1}^{n-1}Z_{n,i}(x_k,h)
+\sum_{i=1}^{n-1}\mathbb{E}\left[\breve{g}_{k}^{2}(X_{i\Delta}) \int_{i\Delta}^{(i+1) \Delta}\tilde{\rho}_{t,i}^{2}dt\right] \\
&\leq \sum_{i=1}^{n-1}Z_{n,i}(x_k,h)+C_{3}Th\Delta\log(1/\Delta)^{2}  ,
\end{flalign*}
where $C_{3}>0$.
For sufficiently large $a>0$, we can get,
\begin{flalign*}
& P\left[\mathcal{V}_{22}\geq a \sqrt{(\log n)^{3}/(nh)} \right]        \\
&\leq \Lambda \varepsilon^{-4}\left\{
P\left[|\check{M}_{T}|\geq aTh \sqrt{(\log n)^{3}/(nh)},
\langle \check{M} \rangle_{T}\leq 2aTh\Delta(\log n)^{2} \right]  \right.       \\
&\quad\left.+P\left[\sum_{i=1}^{n-1}|Z_{n,i}(x_k,h)|
+C_{3}Th\Delta\log(1/\Delta)^{2}> 2aTh\Delta(\log n)^{2}\right]
\right\}      \\
&\leq \Lambda \varepsilon^{-4}\left\{
P\left[|\check{M}_{T}|\geq aTh \sqrt{(\log n)^{3}/(nh)},
\langle \check{M} \rangle_{T}\leq 2aTh\Delta(\log n)^{2} \right]  
+P\left[\sum_{i=1}^{n-1}|Z_{n,i}(x_k,h)|> aTh\Delta(\log n)^{2}\right]
\right\}      \\
&\leq \Lambda \varepsilon^{-4}\left[
\exp \left(-\frac{a \log n}{4}\right)+4\breve{A}nm^{-1-\breve{\beta}}\Delta^{-\breve{\beta}} \right.\\
&\quad\left.+4\exp
\left(-\frac{a^{2}T^{2}h^{2}\Delta^{2}(\log n)^{4} }{64n
\left(C_{2}m\Delta^{4}(\log(1/\Delta))^{4}h\right)  +\frac{8}{3}C_{1}\Delta^{2}(\log(1/\Delta))^{2}aTh\Delta(\log n)^{2} m }\right)    \right]      \\
&\leq \Lambda \varepsilon^{-4}\left[
n^{-\frac{a}{4} }
+4\breve{A}n^{-\breve{\beta}}h^{-1-\breve{\beta}}(\log n)^{1+\breve{\beta}}\Delta^{-\breve{\beta}}
+4\exp\left(-\frac{a\log n}{64C_{2}/a+(8/3)C_{1}} \right)
\right]\\
&= O\left((\log n)^{-2}n^{2}h^{-2}
(n^{-\frac{a}{4} }+4n^{-C_{4} a}+4n^{-\breve{\beta}+(1+\breve{\beta})\theta+\breve{\beta}\kappa} )  \right)     \\
&=O\left((\log n)^{-4}n^{2+2\theta}
(n^{-\frac{a}{4} }+4n^{-C_{4} a}+4n^{-\breve{\beta}+(1+\breve{\beta})\theta+\breve{\beta}\kappa} )  \right),
\end{flalign*}
where $C_{4}=1/(64C_{2}/a+(8/3)C_{1})$.
The third inequality directly follows from the exponential inequality of continuous martingales and the Bernstein inequality applied to strong mixing arrays.
The fourth inequality holds with
$m=\frac{nh}{\log n}$.
It is worth mentioning that for sufficiently large
$a,n$ and sufficiently small $h, \Delta$, $m$ satisfies,
\begin{flalign*}
m\leq \min\left\{n-1,
\frac{aTh\Delta(\log n)^{2}}{C_{1}\Delta^{2}(\log(1/\Delta))^{2}}\right\}.
\end{flalign*}
If
$2+2\theta-\breve{\beta}+(1+\breve{\beta})\theta+\breve{\beta}\kappa\leq 0$,
i.e.,
$\breve{\beta}\geq\frac{2+3\theta}{1-\theta-\kappa}$,
we could get $\mathcal{V}_{22}=O_{P}(\sqrt{(\log n)^{3}/(nh)})$.
Consequently, $V_{2}=O_{P}(\sqrt{(\log n)^{3}/(nh)})$.
\end{proof}
\begin{lemma}
\label{lemmafdwtmu}
Suppose Assumption $\ref{assumption-stationary and mixing}$ and condition $(1)$ of Assumption $\ref{assumption-kernel and damping}$ hold.
$\sup_{x}|\pi(x)|<\infty$. As $\Delta, h \to 0$, $T\rightarrow \infty$, we have $\Delta^{-1}=O(T^{\bar{\kappa}})$  for some $\bar{\kappa}>0$, and $(\log T)/(T^{\bar{\theta}}h) \to 0$ 
for some constant $\bar{\theta}\in (0,1)$. 
Moreover, $1-(1+\frac{4}{q})\bar{\theta}-\frac{2\bar{\kappa}}{q}>0$.
If
$\breve{\beta}\geq\frac{ \frac{3}{2}+\bar{\theta}+\bar{\kappa}}{ 1-\bar{\theta}(1+\frac{4}{q})-\frac{2\bar{\kappa}}{q}}-2,$
then as $n, T \to\infty$ and $\Delta, h \to 0$,
\begin{equation*}
\label{lemmarhodw}
\sup\limits_{x\in \mathbb{R}}
\bigg|\frac{1}{T} \sum_{i=1}^{n-1} K_{h}\left(X_{(i-1)\Delta}-x\right)
\int_{i\Delta}^{(i+1)\Delta}\sigma(X_{t})dW_{t}\bigg|
=O_{P}(\sqrt{(\log T)/(T^{\bar{\theta}}h)}).
\end{equation*}
\end{lemma}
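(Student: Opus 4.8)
Write $M_T(x):=\sum_{i=1}^{n-1}K_h(X_{(i-1)\Delta}-x)\int_{i\Delta}^{(i+1)\Delta}\sigma(X_t)\,dW_t$, so that the target is $\sup_{x}T^{-1}|M_T(x)|$. The idea is to treat $M_T(x)$ as a continuous $L^2$-martingale in its terminal time: for $t\in(i\Delta,(i+1)\Delta]$ the integrand $K_h(X_{(i-1)\Delta}-x)\sigma(X_t)$ is adapted, and $\mathbb{E}\int_0^T(\cdot)^2\,dt<\infty$ by the linear growth of $\sigma$ and stationarity, so $\langle M(x)\rangle_T=\sum_{i=1}^{n-1}K_h^2(X_{(i-1)\Delta}-x)\int_{i\Delta}^{(i+1)\Delta}\sigma^2(X_t)\,dt$. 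Since $\sigma$ is not uniformly bounded in this lemma, I would first truncate with a slowly diverging level $\psi_T$, writing $\sigma(X_t)=\sigma(X_t)\mathbb{I}_{\{|X_{(i-1)\Delta}|\le\psi_T\}}+\sigma(X_t)\mathbb{I}_{\{|X_{(i-1)\Delta}|>\psi_T\}}$; both indicators are $\mathcal F_{(i-1)\Delta}$-measurable and so remain admissible stochastic-integral weights. This splits $T^{-1}|M_T(x)|$ into a ``tail'' part $V_1$ and a ``bounded'' part $V_2$, paralleling the $V_1,V_2$ decomposition in the proof of Lemma~\ref{lemmafydw}. The crucial structural difference is that the inner integrand here is $\sigma(X_t)$, of order one, rather than the tiny process $\tilde\rho_{t,i}=O(\sqrt\Delta\log(1/\Delta))$ there; accordingly the relevant quadratic variation will be of order $Th$ instead of $Th\Delta(\log n)^2$, which is exactly why the rate comes out as $\sqrt{(\log T)/(T^{\bar\theta}h)}$ rather than $\sqrt{(\log n)^3/(nh)}$.

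For $V_1$ I would use the crude pointwise bound $\sup_x T^{-1}|\sum_i K_h(X_{(i-1)\Delta}-x)a_i|\le (\breve K/Th)\sum_i|a_i|$ with $a_i:=\int_{i\Delta}^{(i+1)\Delta}\sigma(X_t)\mathbb{I}_{\{|X_{(i-1)\Delta}|>\psi_T\}}dW_t$, and take expectations: the It\^{o} isometry plus stationarity reduce everything to $\mathbb{E}[\sigma^2(X_t)\mathbb{I}_{\{|X_{(i-1)\Delta}|>\psi_T\}}]$, which by (\ref{result-incrementop}) (the modulus event empties the set $\{|X_t|\le\psi_T/2,\,|X_{(i-1)\Delta}|>\psi_T\}$ for small $\Delta$) together with $\mathbb{E}|X_t|^{2+q}<\infty$ is $O(\psi_T^{-q})$. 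This yields $\mathbb{E}[V_1]=O(\Delta^{-1/2}h^{-1}\psi_T^{-q/2})$, and the choice $\psi_T=(T^{\bar\theta}/(\Delta h\log T))^{1/q}$ makes $V_1=O_P(\sqrt{(\log T)/(T^{\bar\theta}h)})$.

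For $V_2$ I would rerun the covering-number argument of Lemma~\ref{lemmafydw} essentially verbatim. By Lemma~\ref{lemma-covering number}, cover $\mathcal K(h)=\{K((\cdot-x)/h):x\in\mathbb R\}$ by $\breve v(h)\le\Lambda\varepsilon^{-4}$ balls with centres $\breve g_k=K((\cdot-x_k)/h)$, and split $V_2\le\mathcal V_{21}+\mathcal V_{22}$. The approximation remainder $\mathcal V_{21}$ is handled by Cauchy--Schwarz --- the kernel part being $\le 8\breve K\varepsilon$ in the empirical $\mathbb L^1$-metric and $n^{-1}\sum_i|\int_{i\Delta}^{(i+1)\Delta}\sigma(X_t)dW_t|^2=O_P(\Delta)$ by the It\^{o} isometry and Markov --- giving $\mathcal V_{21}=O_P(\varepsilon/(\sqrt\Delta\,h))$, so $\varepsilon:=\sqrt{\Delta h(\log T)/T^{\bar\theta}}$ produces the target rate. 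For $\mathcal V_{22}$, each $\check M^{(k)}_T:=\sum_i\breve g_k(X_{(i-1)\Delta})\int_{i\Delta}^{(i+1)\Delta}\sigma(X_t)\mathbb{I}_{\{|X_{(i-1)\Delta}|\le\psi_T\}}dW_t$ is a continuous martingale; on the (asymptotically full-probability) modulus event of (\ref{result-incrementop}) the truncation forces $\sigma^2(X_t)\le C(1+\psi_T^2)$ on the support of the weight, so $\langle\check M^{(k)}\rangle_T\le C(1+\psi_T^2)\Delta\sum_i\breve g_k^2(X_{(i-1)\Delta})$, a bounded $\alpha$-mixing sum of mean $O((1+\psi_T^2)Th)$ uniformly in $k$ (using $\sup_x\pi(x)<\infty$). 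Decomposing this into mean plus a centred sum and controlling the latter by a Bernstein inequality for strong-mixing arrays with block length $m\asymp nh/\log T$, and then invoking the exponential inequality for continuous martingales, I would get
\[
P\!\left(|\check M^{(k)}_T|\ge aTh\sqrt{\tfrac{\log T}{T^{\bar\theta}h}},\ \langle\check M^{(k)}\rangle_T\le C(1+\psi_T^2)Th\right)\ \le\ 2\exp\!\left(-\,c\,a^2\,\frac{T^{1-\bar\theta}\log T}{1+\psi_T^2}\right).
\]
A union bound over the $\breve v(h)=O(\varepsilon^{-4})$ centres (a polynomial in $T$), together with the mixing tail from the Bernstein step, then tends to $0$, giving $\mathcal V_{22}=O_P(\sqrt{(\log T)/(T^{\bar\theta}h)})$ and hence the assertion.

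The main obstacle, exactly as in Lemma~\ref{lemmafydw}, is calibrating the single truncation level $\psi_T$: it must be large enough that the bias $V_1$ is negligible and small enough that the factor $1+\psi_T^2$ does not swamp the martingale deviation bound for $\mathcal V_{22}$. Feeding $\psi_T^q\asymp T^{\bar\theta}/(\Delta h\log T)$ and the worst admissible bandwidth $h\asymp(\log T)/T^{\bar\theta}$ into the exponent above gives $\psi_T^2\asymp T^{2(2\bar\theta+\bar\kappa)/q}$ up to logarithms, so that exponent is a positive power of $T$ precisely when $1-(1+\tfrac4q)\bar\theta-\tfrac{2\bar\kappa}{q}>0$ --- that is the source of this hypothesis. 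The lower bound $\breve\beta\ge\frac{\frac32+\bar\theta+\bar\kappa}{1-\bar\theta(1+\frac4q)-\frac{2\bar\kappa}{q}}-2$ is what forces the mixing tail, multiplied by the net cardinality, to vanish; executing that estimate requires choosing the block length $m$ simultaneously compatible with the polynomial mixing rate $\alpha(s)\le\breve A s^{-\breve\beta}$, with $\Delta^{-1}=O(T^{\bar\kappa})$, and with the martingale-deviation scale, which is the most delicate bookkeeping in the proof.
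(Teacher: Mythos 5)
The paper states Lemma \ref{lemmafdwtmu} without proof, so there is no official argument to compare against; the implicit intention is clearly that one adapts the proof of Lemma \ref{lemmafydw}, and that is exactly what you do. Your architecture is the right one and matches the paper's template: truncate at a level $\psi_T$ through the $\mathcal F_{(i-1)\Delta}$-measurable indicator (necessary here because, unlike in Lemma \ref{lemmafydw}, $\sigma$ is not assumed bounded), kill the tail term by It\^o isometry plus the moment condition $\mathbb{E}|X_t|^{2+q}<\infty$, and handle the main term by the covering-number reduction of Lemma \ref{lemma-covering number} followed by the exponential inequality for continuous martingales and a Bernstein inequality for mixing arrays applied to the quadratic variation. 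Your calibration of $\psi_T$ and $\varepsilon$ is consistent, and your identification of the hypothesis $1-(1+\tfrac4q)\bar\theta-\tfrac{2\bar\kappa}{q}>0$ as precisely the condition making $T^{1-\bar\theta}/\psi_T^2$ a positive power of $T$ is correct and is the genuinely informative part of the proof. (Minor shared blemish with the paper's own Lemma \ref{lemmafydw}: Cauchy--Schwarz on $\mathcal V_{21}$ needs the $\mathbb L^2$ covering radius, so the cardinality should be $\Lambda\varepsilon^{-8}$ rather than $\Lambda\varepsilon^{-4}$; this is harmless since it stays polynomial.)

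The one genuine soft spot is the final bookkeeping you yourself flag: you assert, but do not verify, that the stated threshold $\breve\beta\ge\frac{\frac32+\bar\theta+\bar\kappa}{1-\bar\theta(1+\frac4q)-\frac{2\bar\kappa}{q}}-2$ makes the mixing remainder times the net cardinality vanish. With your block length $m\asymp nh/\log T$ and the worst admissible $h\asymp(\log T)/T^{\bar\theta}$, the remainder $\varepsilon^{-4}\,n\,m^{-1-\breve\beta}\Delta^{-\breve\beta}$ is of order $T^{5\bar\theta+2\bar\kappa-\breve\beta(1-\bar\theta)}$ up to logarithms, which requires $\breve\beta\ge(5\bar\theta+2\bar\kappa)/(1-\bar\theta)$ --- a condition that the paper's stated threshold does not imply for all admissible $(\bar\theta,\bar\kappa,q)$ (e.g.\ the stated right-hand side can be negative while this one is positive). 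So either the block length must be chosen differently from the Lemma \ref{lemmafydw} template to land on the paper's constant, or the paper's condition is itself not the exact outcome of this argument; in either case this step needs to be carried out explicitly rather than asserted before the proof can be considered complete.
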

\begin{lemma}
\label{lemmafef}
Suppose Assumptions $\ref{assumption-stationary and mixing}$ and $\ref{assumption-kernel and damping}$ hold. Assume that
$\sup_{x}|\pi(x)|<\infty$, $\sup_{x,y}|\pi_{t,t+s}(x,y)|<\infty$.
Let $\Delta^{-1}=O(n^{\kappa})$, and as $n\rightarrow \infty$ and $\Delta, h \to 0$, $(\log n)/(n^{\theta}h) \to 0$, where $\theta\in(0,1)$ and
$\kappa\in(0,1/2)$ are constants.    \\
$(1)$ If $\sigma$ is uniformly bounded. Moreover, $\breve{\beta}>\frac{2+\frac{1}{2+q}}{1-2\kappa}$, then as $n, T \rightarrow \infty$ and $\Delta, h \rightarrow 0$, it follows that
\begin{align*}
&\sup\limits_{x\in \mathbb{R}}
\left|\frac{1}{n} \sum_{i=1}^{n-1}\left[
K_{h}\left(X_{i\Delta}-x\right)\left(\sigma^{2}(X_{i\Delta})-\sigma^{2}(x)\right)
-\mathbb{E}\left[K_{h}\left(X_{i\Delta}-x\right)\left(\sigma^{2}(X_{i\Delta})-\sigma^{2}(x)\right) \right] \right]\right|      
=O_{P}(\sqrt{(\log n)/(nh)}).
\end{align*}
$(2)$ If $\breve{\beta}>\frac{(2+\frac{1}{2+q})\theta}{1-\theta-\kappa}$ with $1-\theta-\kappa>0$, then as $n, T \rightarrow \infty$ and $\Delta, h \rightarrow 0$, we obtain,
\begin{align*}
&\sup\limits_{x\in \mathbb{R}}
\left|\frac{1}{n} \sum_{i=1}^{n-1}\left[
K_{h}\left(X_{i\Delta}-x\right)
-\mathbb{E}\left[K_{h}\left(X_{i\Delta}-x\right) \right] \right]\right|
=O_{P}(\sqrt{(\log n)/(n^{\theta}h)}).
\end{align*}
\end{lemma}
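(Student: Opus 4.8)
The plan is to prove both parts by the same scheme used for Lemma~\ref{lemmafydw}, replacing the exponential inequality for continuous martingales by the Bernstein-type inequality for strong-mixing arrays already invoked in the last display of that proof. I would present part~(1) in full and deduce part~(2) verbatim after replacing the summand $\phi_x(y):=K_h(y-x)\bigl(\sigma^2(y)-\sigma^2(x)\bigr)$ by $K_h(y-x)$, which only enlarges the pointwise and variance bounds (hence the rate), as explained at the end. Write $S_n(x):=\frac1n\sum_{i=1}^{n-1}\bigl[\phi_x(X_{i\Delta})-\mathbb{E}\phi_x(X_{i\Delta})\bigr]$; the goal is $\sup_{x\in\mathbb{R}}|S_n(x)|=O_P(\sqrt{(\log n)/(nh)})$.

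First I would truncate the effective support. Since $K$ is supported on $[-\bar c_K,\bar c_K]$, $\phi_x(X_{i\Delta})=0$ once $|x|>\max_i|X_{i\Delta}|+\bar c_K h$; by Assumption~\ref{assumption-stationary and mixing}$(3)$ and a union bound, $\max_{1\le i\le n-1}|X_{i\Delta}|\le\psi_n$ with probability tending to one for a slowly growing $\psi_n$ (a power of $n$ fixed by $\mathbb{E}|X_t|^{2+q}<\infty$ — this is the source of the exponent $1/(2+q)$ in the hypotheses), while the tail decay of $\pi$ entailed by that moment bound makes $\sup_{|x|>\psi_n}|\mathbb{E}\phi_x(X_1)|=o(\sqrt{(\log n)/(nh)})$. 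Hence it suffices to control $\sup_{|x|\le\psi_n}|S_n(x)|$. Next I would cover $[-\psi_n,\psi_n]$ by a grid $\{x_k\}_{k\le N_n}$ of mesh $\eta_n$, so $N_n\asymp\psi_n/\eta_n$ (equivalently one could apply the covering-number Lemma~\ref{lemma-covering number}; the explicit grid simply keeps track of the $\psi_n$-dependence). Using that $K$ and $\sigma^2$ are Lipschitz (Assumption~\ref{assumption-kernel and damping}) and that $|\sigma^2(y)-\sigma^2(x)|\le Ch$ on the support of $K_h(\cdot-x)$, one gets the deterministic estimate $\sup_{|x-x_k|\le\eta_n}|\phi_x(y)-\phi_{x_k}(y)|\le C\eta_n/h$; taking $\eta_n$ to be a suitable power of $n$ renders the discretization error $o_P(\sqrt{(\log n)/(nh)})$, so that $\sup_{|x|\le\psi_n}|S_n(x)|\le\max_{k\le N_n}|S_n(x_k)|+o_P(\sqrt{(\log n)/(nh)})$.

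Then, at a fixed grid point $x_k$, I would set $Z_{n,i}:=\phi_{x_k}(X_{i\Delta})-\mathbb{E}\phi_{x_k}(X_{i\Delta})$: the cancellation $\sigma^2(X_{i\Delta})-\sigma^2(x_k)$ with the compact support of $K$ gives $|Z_{n,i}|\le C_1$, the kernel localization gives $\mathrm{Var}(Z_{n,i})\le C_2h$, and the joint-density bound $\sup_{u,v}|\pi_{s,t}(u,v)|<\infty$ gives $|\mathrm{Cov}(Z_{n,i},Z_{n,j})|\le C_2h^2$, whence $\mathbb{E}\bigl[(\sum_{i=1}^m Z_{n,i})^2\bigr]\le C_3(mh+m^2h^2)$. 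Splitting the sample into blocks of length $m$ (a power of $n$, as in Lemma~\ref{lemmafydw}) and applying the Bernstein inequality for strong-mixing arrays, for all large $a$,
\[
P\bigl[|S_n(x_k)|>a\sqrt{(\log n)/(nh)}\bigr]\le C\exp(-c_1 a\log n)+C\breve A\,n\,m^{-1-\breve\beta}\Delta^{-\breve\beta}.
\]
A union bound over the $N_n$ grid points then makes the first term $N_n n^{-c_1a}\to0$ for $a$ large (since $N_n$ is polynomial in $n$), while the mixing term $N_n C\breve A\,n\,m^{-1-\breve\beta}\Delta^{-\breve\beta}$, after inserting $\Delta^{-\breve\beta}\le n^{\kappa\breve\beta}$, the form of $N_n$ (which carries the factor $n^{1/(2+q)}$ from $\psi_n$) and the chosen $m$, tends to zero provided $\breve\beta>(2+\tfrac1{2+q})/(1-2\kappa)$; this gives part~(1). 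For part~(2), where $\phi_x=K_h(\cdot-x)$, one has $|Z_{n,i}|\le2\breve K/h$, $\mathrm{Var}(Z_{n,i})\le C/h$ and $|\mathrm{Cov}(Z_{n,i},Z_{n,j})|\le C$, so the Bernstein bound degrades by a factor $h$; running the same argument with target rate $\sqrt{(\log n)/(n^\theta h)}$ and the block length of Lemma~\ref{lemmafydw} yields the condition $\breve\beta>(2+\tfrac1{2+q})\theta/(1-\theta-\kappa)$.

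The hard part will be the joint calibration of the truncation level $\psi_n$, the grid mesh $\eta_n$ (equivalently the covering radius $\varepsilon$), and the block length $m$ — all powers of $n$ up to logarithmic factors — so that simultaneously the discretization and large-$|x|$ contributions are $o(\mathrm{rate})$, the Gaussian-type exponential term dominates the polynomial grid size $N_n$, and the polynomially decaying mixing term, inflated by $N_n$ and by $\Delta^{-\breve\beta}\le n^{\kappa\breve\beta}$, still vanishes; it is this last constraint that forces the stated lower bounds on $\breve\beta$. The difference between the denominators $1-2\kappa$ in part~(1) and $1-\theta-\kappa$ in part~(2) reflects the two effective sample sizes $nh$ and $n^\theta h$, which come respectively from the bounded summand $\phi_x$ in part~(1) and from the merely $O(1/h)$-bounded kernel $K_h$ in part~(2).
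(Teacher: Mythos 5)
Your overall architecture coincides with the paper's: restrict to a growing compact set, discretize by a grid whose mesh is calibrated so that the Lipschitz-in-$x$ oscillation falls below the target rate, apply a Bernstein-type inequality for strongly mixing arrays at each grid point, and close with a union bound whose polynomially decaying mixing remainder dictates the lower bound on $\breve{\beta}$. However, two of the quantitative steps you commit to are too crude to recover the lemma under its stated hypotheses. First, you truncate via $\max_{i\le n-1}|X_{i\Delta}|\le \psi_n$ and a union bound, which under $\mathbb{E}|X_t|^{2+q}<\infty$ forces $\psi_n\gtrsim n^{1/(2+q)}$. The paper instead observes that for $|x|>\tilde{b}_n$ the summand is nonzero only when $|X_{i\Delta}|>\tilde{b}_n/2$, and bounds the resulting \emph{time average} by a Markov-type argument, $\frac1n\sum_i|X_{i\Delta}|^{2+q}/\tilde{b}_n^{2+q}=O_P(\tilde{b}_n^{-(2+q)})$; this permits the much smaller $\tilde{b}_n=(nh/\log n)^{1/(2(2+q))}$. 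Since the mesh is pinned at $\tilde{r}_n\asymp\sqrt{h^3\log n/n}$ by the discretization error $O_P(\tilde{r}_n/h^2)$, the grid cardinality with your $\psi_n$ carries $n^{1/(2+q)}$ rather than $n^{1/(2(2+q))}$, and the union bound then requires $\breve{\beta}>(2+\frac{2}{2+q})/(1-2\kappa)$, strictly stronger than the stated $(2+\frac{1}{2+q})/(1-2\kappa)$.

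Second, your block-variance bound $\mathbb{E}[(\sum_{i=1}^m Z_{n,i})^2]\le C(mh+m^2h^2)$ sums the crude covariance bound $Ch^2$ over all $m^2$ pairs. The paper's key refinement (its inequality (\ref{prooflemmafefvariance})) is to bound each covariance by the \emph{minimum} of the joint-density bound and the Billingsley bound $4\alpha(j\Delta)\Vert H_1\Vert_\infty^2\lesssim (j\Delta)^{-\breve{\beta}}h^2$, split the lag sum at $j^*=\Delta^{-1}h^{-2/\breve{\beta}}$, and obtain a block variance that is \emph{linear} in $m$, namely $\Sigma^2_{H,m}\lesssim m\Delta^{-1}h^{4-2/\breve{\beta}}$ in the paper's normalization. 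Without this, the Gaussian part of the Bernstein exponent does not dominate at the block length $m=\sqrt{n/(h\log n)}$ that is needed to make the mixing remainder $nm^{-1-\breve{\beta}}\Delta^{-\breve{\beta}}$, multiplied by the grid cardinality, vanish under the stated threshold; you would be forced to shrink $m$, and the resulting condition on $\breve{\beta}$ would become $\theta$-dependent and stronger than claimed. The same refinement is needed, \emph{mutatis mutandis}, in part (2). Everything else in your plan --- the compact-support truncation, the Lipschitz discretization, and the attribution of the two effective sample sizes $nh$ and $n^{\theta}h$ to the bounded versus $O(1/h)$-bounded summands --- matches the paper's proof.
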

\begin{proof}
Denote
\begin{align*}
H_{i}(x)=K\left((X_{i\Delta}-x)/h\right)\left(\sigma^{2}(X_{i\Delta})-\sigma^{2}(x)\right).
\end{align*}
Consider a compact interval $[-\tilde{b}_{n},\tilde{b}_{n}]\subset\mathbb{R}$, where $\tilde{b}_{n}\to \infty$ at a specified growth rate. This interval is covered by a finite collection of closed balls $\{I_{k}\}_{k=1}^{\bar{v}(n)}$, such that $[-\tilde{b}_{n},\tilde{b}_{n}]\subset\cup_{k=1}^{\bar{v}(n)}I_{k}$. Each ball $I_{k}$ is centered at  $x_k$ and has radius $\tilde{r}_{n}$, with $ \bar{v}(n)= \tilde{b}_{n}/\tilde{r}_{n}$. Then, we can obtain that,
\begin{align*}
&\sup\limits_{x\in \mathbb{R}} \left|
\frac{1}{nh} \sum_{i=1}^{n-1}
\left[H_{i}(x)-\mathbb{E}[H_{i}(x)]\right] \right|  \\
&\leq \sup\limits_{|x|>\tilde{b}_{n}}
\frac{1}{nh} \sum_{i=1}^{n-1}
\left|H_{i}(x)-\mathbb{E}[H_{i}(x)] \right|    
+\max\limits_{k\in\{1,\cdots,\bar{v}(n)\}}\sup\limits_{x\in I_{k}}
\frac{1}{nh} \sum_{i=1}^{n-1}\left[
\left|H_{i}(x)-H_{i}(x_{k})\right|+\left|\mathbb{E}[H_{i}(x)]-\mathbb{E}[H_{i}(x_{k})] \right|\right]   \\ 
&~~+\max\limits_{k\in\{1,\cdots,\bar{v}(n)\}}
\left|\frac{1}{nh} \sum_{i=1}^{n-1}\left[
H_{i}(x_{k})-\mathbb{E}[H_{i}(x_{k})] \right]\right|        \\ 
&:=R_{1}+R_{2} +R_{3}.
\end{align*}
For $R_{1}$, we have,
\begin{align*}
R_{1}&\leq \sup\limits_{|x|>\tilde{b}_{n}}
\frac{1}{nh} \sum_{i=1}^{n-1}
\left|H_{i}(x)\mathbb{I}_{\{|X_{i\Delta}|>\frac{\tilde{b}_{n}}{2}\}}
-\mathbb{E}\left[H_{i}(x)\mathbb{I}_{\{|X_{i\Delta}|>\frac{\tilde{b}_{n}}{2}\}}\right] \right|   \\
&~~+ \sup\limits_{|x|>\tilde{b}_{n}}
\frac{1}{nh} \sum_{i=1}^{n-1}
\left|H_{i}(x)\mathbb{I}_{\{|X_{i\Delta}|\leq\frac{\tilde{b}_{n}}{2}\}}
-\mathbb{E}\left[H_{i}(x)\mathbb{I}_{\{|X_{i\Delta}|\leq\frac{\tilde{b}_{n}}{2}\}}\right] \right|    \\
&:=R_{11}+R_{12}.
\end{align*}
Note that $(X_{i\Delta}-x)/h\geq \tilde{b}_{n}/(2h)$, if $|x|>\tilde{b}_{n}$ and $|X_{i\Delta}|\leq \tilde{b}_{n}/2$. Then, $K\left((X_{i\Delta}-x)/h\right)=0$, if $\tilde{b}_{n}/(2h)\geq \bar{c}_{K}$ with sufficiently large $n$. Therefore, the convergence rate of $R_{1}$ is denominated by $R_{11}$.
Applying the mean-value theorem, we have,
\begin{align*}
&\sup\limits_{|x|>\tilde{b}_{n}}
\frac{1}{nh} \sum_{i=1}^{n-1}
\left|H_{i}(x)\mathbb{I}_{\{|X_{i\Delta}|>\frac{\tilde{b}_{n}}{2}\}} \right|   \\
&\leq  \frac{\breve{K}}{nh} \sum_{i=1}^{n-1}\mathbb{I}_{\{|y-x|\leq \bar{c}_{K}h\}}
|\sigma^{2}(y)-\sigma^{2}(x)|\mathbb{I}_{\{|X_{i\Delta}|>\frac{\tilde{b}_{n}}{2}\}}       \\
&\leq  \frac{\breve{K}}{nh} \sum_{i=1}^{n-1}\sup\limits_{|y-x|\leq \bar{c}_{K}h}
\left|(\sigma^{2})'\left(y+\bar{\lambda}(x-y)\right)(y-x)\right|\frac{|X_{i\Delta}|^{2+q}}{(\tilde{b}_{n}/2)^{2+q}}      \\
&=O_{P}\left(1/(\tilde{b}_{n}^{2+q})\right),
\end{align*}
where $\bar{\lambda}\in[0,1]$.
The last equality follows from the boundedness of $(\sigma^{2})'$ and the moment condition of $X_t$.
Similarly, we could get
$$\sup\limits_{|x|>\tilde{b}_{n}}
\frac{1}{nh} \sum_{i=1}^{n-1}
\left|\mathbb{E}\left[H_{i}(x)\mathbb{I}_{\{|X_{i\Delta}|>\frac{\tilde{b}_{n}}{2}\}}\right] \right|=O\left(1/(\tilde{b}_{n}^{2+q})\right).  $$
In conclusion, $R_{1}=O_{P}\left(1/(\tilde{b}_{n}^{2+q})\right)$.
For $R_{2}$, define an event $D_{n,i}(x,x_k)$ in $\Omega$ as follows for each $(n,i,x,x_k)$,
\begin{align*}
D_{n,i}(x,x_k):=\{\max\{|X_{i\Delta}-x|,|X_{i\Delta}-x_k|\}\leq\bar{c}_{K}h+\tilde{r}_{n}  \}.
\end{align*}
Note that for any $x\in I_{k}$, $|x-x_{k}|\leq \tilde{r}_{n}$, we can get,
\begin{align*}
&\{|X_{i\Delta}-x|\leq \bar{c}_{K}h\}
\subset D_{n,i}(x,x_k),     \ \ \text{and},
\{|X_{i\Delta}-x_k|\leq\bar{c}_{K}h\}\subset D_{n,i}(x,x_k).
\end{align*}
For any $x\in I_{k}$,
\begin{align*}
&|H_{i}(x)-H_{i}(x_{k})|     \\
&\leq \sigma^{2}(X_{i\Delta})\left| K\left(\frac{X_{i\Delta}-x}{h}\right)-K\left(\frac{X_{i\Delta}-x_k}{h}\right)\right|
+K\left(\frac{X_{i\Delta}-x}{h}\right)\mathbb{I}_{D_{n,i}(x,x_k)}|\sigma^{2}(x)-\sigma^{2}(x_k)|    \\
&~~+\sigma^{2}(x_k)\mathbb{I}_{D_{n,i}(x,x_k) }
\left| K\left(\frac{X_{i\Delta}-x}{h}\right)-K\left(\frac{X_{i\Delta}-x_k}{h}\right)\right|     \\
&\leq \tilde{K}|x_{k}-x|/h
+\breve{K}\mathbb{I}_{D_{n,i}(x,x_k)}|(\sigma^{2})'(x+\lambda(x_{k}-x))||x-x_k|
+\tilde{K}\mathbb{I}_{D_{n,i}(x,x_k)}|x-x_k|/h      \\
&\leq (\tilde{K}_{1}\tilde{r}_{n})/h,
\end{align*}
where $\lambda\in[0,1]$, $\tilde{K}$, $\tilde{K}_{1}>0$. The first inequality uses the triangle inequality, the second inequality holds with the boundedness of $K$, $\sigma^{2}$, and Lipschitz property of $K$.
Therefore, $R_{2}=O_{P}(\tilde{r}_{n}/h^{2})$.
Let
\begin{align*}
\tilde{b}_{n}=\left(\frac{nh}{\log n}\right)^{1/(2(2+q))},\quad \tilde{r}_{n}=\sqrt{\frac{h^{3}\log n}{n}},
\end{align*}
we can get $R_{1}$ and $R_{2}$ are all of order $\sqrt{(\log n)/(nh)}$.
For $R_{3}$,
define
\begin{align*}
\Sigma_{H,m}^{2}&:=\mathbb{E}\left[\left|\sum_{i=1}^{m}[
H_{i}(x_{k})-\mathbb{E}[H_{i}(x_{k})]]\right|^{2}\right],\ \text{where} \
m\leq n-1.
\end{align*}
In the following, we will show that if $\breve{\beta}>1$, it can be obtained that,
\begin{align}
\label{prooflemmafefvariance}
\Sigma_{H,m}^{2}\leq\bar{C}_{2}m\Delta^{-1}h^{4-\frac{2}{\breve{\beta}}},
\end{align}
where $\bar{C}_{2}$ is some constant.  
Note that,
\begin{align}
\Sigma_{H,m}^{2}
&=\mathbb{E}\left[\sum_{i=1}^{m}
\left[H_{i}(x_{k})-\mathbb{E}[H_{i}(x_{k})]\right]^{2}\right]
+2\sum_{1\leq i< j\leq m}\mathbb{E}\left[
\left[H_{i}(x_{k})-\mathbb{E}[H_{i}(x_{k})]\right]
\left[H_{j}(x_{k})-\mathbb{E}[H_{j}(x_{k})]\right]\right]       \nonumber\\
&\leq \sum_{i=1}^{m}\mathbb{E}\left[H_{i}^{2}(x_{k})\right]
-\sum_{i=1}^{m}\left[\mathbb{E}[H_{i}(x_{k})]\right]^{2}
+2\sum_{1\leq i< j\leq m}\left[\mathbb{E}\left[H_{i}(x_{k})H_{j}(x_{k})\right]
-\mathbb{E}\left[H_{i}(x_{k})\right]\mathbb{E}\left[H_{j}(x_{k})\right]\right]      \nonumber\\
&\leq \sum_{i=1}^{m}\mathbb{E}\left[H_{i}^{2}(x_{k})\right]
+2\sum_{1\leq i< j\leq m}\mathbb{E}\left[|H_{i}(x_{k})||H_{j}(x_{k})|\right]    \nonumber\\
&\leq \bar{C}mh^{3}+2\sum_{1\leq i< j\leq m}\mathbb{E}\left[|H_{i}(x_{k})||H_{j}(x_{k})|\right], \label{prooflemmafeffirst}
\end{align}
where $\bar{C}>0$. The last inequality uses change-of variable and the boundedness of $(\sigma^{2})'$.
To establish the bound for the second term in (\ref{prooflemmafeffirst}), we bound $\mathbb{E}\left[|H_{1}(x_{k})||H_{j}(x_{k})|\right]$ using two approaches.
At first, for $j>1$,
\begin{align*}
&\mathbb{E}\left[|H_{1}(x_{k})||H_{j}(x_{k})|\right]        \\
&=\int_{-\infty}^{\infty}\int_{-\infty}^{\infty}h^{2}
K(u)|\sigma^{2}(uh+x_{k})-\sigma^{2}(x_{k})|K(v)|\sigma^{2}(vh+x_{k})-\sigma^{2}(x_{k})|
\pi_{1,j}(uh+x_{k},vh+x_{k})dudv    \\
&\leq \bar{C}_{1}h^{4},
\end{align*}
where $\bar{C}_{1}>0$.
The last inequality holds with the boundedness of $\pi_{1,j}$ and $(\sigma^{2})'$. Second, it is easy to obtain that $|H_{i}(x_{k})|\leq C_{H} h$ for some positive constant $C_{H}$, where $i\geq 1$. Using the Billingsley inequality (Corollary 1.1 in \cite{Bosq1998Nonparametric}), we have,
\begin{align*}
&\mathbb{E}\left[|H_{1}(x_{k})||H_{j}(x_{k})|\right]
\leq 4\alpha(j\Delta)\Vert  H_{1}(x_{k}) \Vert_{\infty}^{2}
\leq 4A C_{H}^{2}j^{-\breve{\beta}}\Delta^{-\breve{\beta}} h^{2},\quad j>1.
\end{align*}
Therefore,
\begin{align}
2\sum_{1\leq i< j\leq m}\mathbb{E}\left[|H_{i}(x_{k})||H_{j}(x_{k})|\right]
&\leq 2m\left[ \sum_{1< j\leq \Delta^{-1}h^{-2/\breve{\beta}}}\bar{C}_{1}h^{4}
+\sum_{j>\Delta^{-1}h^{-2/\breve{\beta}} }4A C_{H}^{2}j^{-\breve{\beta}}\Delta^{-\breve{\beta}} h^{2}\right] \nonumber  \\
&\leq 2\bar{C}_{1}m\Delta^{-1}h^{-2/\breve{\beta}}h^{4}
+8A C_{H}^{2}m\Delta^{-1}h^{-2/\breve{\beta}}h^{4} /(\breve{\beta}-1)    \nonumber  \\
&= \bar{C}_{2}m\Delta^{-1}h^{4-(2/\breve{\beta})},\label{prooflemmafefsecond}
\end{align}
where $\bar{C}_{2}=2\bar{C}_{1}+8A C_{H}^{2}/(\breve{\beta}-1)$. It is worth mentioning that $\breve{\beta}>1$, and $\Delta^{-1}h^{-2/\breve{\beta}}\geq 2$ for small $\Delta$ and $h$. The second inequality holds with the application of
\begin{align*}
\sum_{j>\Delta^{-1}h^{-2/\breve{\beta}} }j^{-\breve{\beta}}\leq
\int_{\Delta^{-1}h^{-2/\breve{\beta}}}^{\infty} x^{-\breve{\beta}}dx
=\frac{1}{\breve{\beta}-1}\Delta^{\breve{\beta}-1}h^{2-2/\breve{\beta}}.
\end{align*}
Combined (\ref{prooflemmafeffirst}) and (\ref{prooflemmafefsecond}), we could get (\ref{prooflemmafefvariance}). Therefore, for large enough $a$, we have,
\begin{align*}
&P [R_{3}\geq a\sqrt{(\log n)/(nh)}]        \\
&\leq \sum_{k=1}^{\bar{v}(n)}P \left[\left|\sum_{i=1}^{n}\left[
H_{i}(x_{k})-\mathbb{E}[H_{i}(x_{k})]\right]\right|\geq anh\sqrt{(\log n)/(nh)}\right]        \\
&\leq \bar{v}(n)\Bigg[
4\exp\left(-\frac{(a^{2}n^{2}h^{2}\log n)/(nh)}{64\bar{C}_{2}n\Delta^{-1}h^{4-2/\breve{\beta}}
+(8/3)am nh\sqrt{(\log n)/(nh)} C_H h }\right)
+4Anm^{-1-\breve{\beta}}\Delta^{-\breve{\beta}}\Bigg]      \\
&\leq \frac{\tilde{b}_{n}}{\tilde{r}_{n}}\Bigg[
4\exp\left(-\frac{a^{2}\log n}{64\bar{C}_{2}\Delta^{-1}h^{3-2/\breve{\beta}}
+(8/3)aC_H }\right)
+4An\left(\frac{n^{2-\theta}}{h\log n}\right)^{(-1-\breve{\beta})/2}\Delta^{-\breve{\beta}}\Bigg]      \\
&\leq \frac{4\tilde{b}_{n}}{\tilde{r}_{n}}n^{-\frac{a^{2}}{64\bar{C}_{2}+(8/3)aC_H}}
+4A\frac{\tilde{b}_{n}}{\tilde{r}_{n}}\left(\frac{n}{h\log n}\right)^{(-1-\breve{\beta})/2}n\Delta^{-\breve{\beta}},
\end{align*}
where the second inequality is because of the exponential inequality of strong mixing arrays, the third inequality holds with
\begin{align*}
m=\frac{\sqrt{n}}{\sqrt{h(\log n)}},\quad
\left(m\leq \max\left\{n-1,\frac{anh\sqrt{\log n}}{\sqrt{nh}4C_H h}\right\}~~\text{for~large}~ a\right).
\end{align*}
The last inequality holds with
\begin{align*}
\Delta^{-1}h^{3-(2/\breve{\beta})}
=(\log n)^{3-2/\breve{\beta}}n^{\kappa-(3-2/\breve{\beta})\theta}\leq 1,
\end{align*}
as $\Delta, h\to 0$ if $\breve{\beta}>1$.
Note that,
\begin{align*}
\frac{\tilde{b}_{n}}{\tilde{r}_{n}}=(\log n)^{-\frac{1}{2(2+q)}-\frac{1}{2}}h^{\frac{1}{2(2+q)}-\frac{3}{2}}
n^{\frac{1}{2(2+q)}+\frac{1}{2}},
\end{align*}
then, the second term of the last inequality is of the order
\begin{align*}
(\log n)^{-\frac{1}{2(2+q)}+\frac{\breve{\beta}}{2}}
h^{\frac{1}{2(2+q)}+\frac{\breve{\beta}}{2}-1}
n^{\frac{1}{2(2+q)}+1-\frac{\breve{\beta}}{2}+\breve{\beta}\kappa}.
\end{align*}
If
$\breve{\beta}>\frac{2+\frac{1}{2+q}}{1-2\kappa}$, we have
$R_{3}=O_{P}(\sqrt{(\log n)/(nh)})$.
The proof for the second part is analogous to that of the first part, for simplicity, we omit it here.
\end{proof}
\begin{lemma}
\label{lemmafefmu}
Assume that Assumptions $\ref{assumption-stationary and mixing}$ and $\ref{assumption-kernel and damping}$ hold.
$\sup_{x}|\pi(x)|<\infty$. $\sup_{x,y}|\pi_{t,t+s}(x,y)|<\infty$. As $\Delta, h \to 0$ and $T\rightarrow \infty$, let
$\Delta^{-1}=O(T^{\bar{\kappa}})$ for some $\bar{\kappa}>0$, and $(\log T)/(T^{\bar{\theta}}h) \to 0$, where $\bar{\theta}\in(0,1)$. \\
$(1)$ If $\breve{\beta}>\frac{(2+\frac{1}{2+q})\bar{\theta}}{1-\bar{\theta}}$,
then as $\Delta, h \rightarrow 0$ and $n, T \rightarrow \infty$,
\begin{align*}
&\sup\limits_{x\in \mathbb{R}}
\left|\frac{1}{n} \sum_{i=1}^{n-1}\left[
K_{h}\left(X_{i\Delta}-x\right)
-\mathbb{E}\left[K_{h}\left(X_{i\Delta}-x\right) \right] \right]\right|
=O_{P}(\sqrt{(\log T)/(T^{\bar{\theta}}h)}).
\end{align*}
$(2)$ If $b $ is uniformly bounded and $\breve{\beta}>\frac{(2+\frac{1}{2+q})\bar{\theta}}{2-\bar{\theta}}$,
then as $\Delta, h \rightarrow 0$ and $n, T \rightarrow \infty$,
\begin{align*}
&\sup\limits_{x\in \mathbb{R}}
\left|\frac{1}{n} \sum_{i=1}^{n-1}\left[
K_{h}\left(X_{i\Delta}-x\right)\left(b (X_{i\Delta})-b (x)\right)
-\mathbb{E}\left[K_{h}\left(X_{i\Delta}-x\right)\left(b (X_{i\Delta})-b (x)\right) \right] \right]\right|
=O_{P}(\sqrt{(\log T)/(T^{\bar{\theta}}h)}).
\end{align*}
\end{lemma}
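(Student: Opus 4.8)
The plan is to follow the scheme of the proof of Lemma~\ref{lemmafef}: localize the supremum over $x$ to a slowly growing window $[-\tilde b_n,\tilde b_n]$, reduce that supremum to a maximum over finitely many grid points $\{x_k\}$ using the Lipschitz continuity of $K$ (so the within-ball oscillation is controlled deterministically), and then bound the fluctuation at each grid point by the Bernstein inequality for $\alpha$-mixing arrays together with a sharp second-moment estimate; the tail $|x|>\tilde b_n$ is dispatched separately by the compact support of $K$ and the $(2+q)$-th moment of $X_t$. For part~(1) I would set $\bar H_i(x):=K((X_{i\Delta}-x)/h)$, cover $[-\tilde b_n,\tilde b_n]$ by $\bar v(n)=\tilde b_n/\tilde r_n$ balls $I_k$ of radius $\tilde r_n$ centred at $x_k$, and write $\sup_x|\tfrac{1}{nh}\sum_{i=1}^{n-1}(\bar H_i(x)-\mathbb{E}[\bar H_i(x)])|\le R_1+R_2+R_3$, with $R_1$ the contribution of $|x|>\tilde b_n$, $R_2$ the within-ball oscillation $|\bar H_i(x)-\bar H_i(x_k)|$ together with its expectation, and $R_3=\max_k|\tfrac{1}{nh}\sum_{i=1}^{n-1}(\bar H_i(x_k)-\mathbb{E}[\bar H_i(x_k)])|$. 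As in Lemma~\ref{lemmafef}, $K$ vanishing off $[-\bar c_K,\bar c_K]$ forces $\bar H_i(x)=0$ when $|x|>\tilde b_n$ and $|X_{i\Delta}|\le\tilde b_n/2$ (for $n$ large), so $R_1\le\breve K(nh)^{-1}\sum_i\mathbb{I}_{\{|X_{i\Delta}|>\tilde b_n/2\}}=O_P(h^{-1}\tilde b_n^{-(2+q)})$ by Markov's inequality, while Lipschitz continuity of $K$ gives $|\bar H_i(x)-\bar H_i(x_k)|\le\tilde K\tilde r_n/h$, hence $R_2=O_P(\tilde r_n/h^2)$; taking $\tilde b_n$ and $\tilde r_n$ of orders $(T^{\bar\theta}/(h\log T))^{1/(2(2+q))}$ and $\sqrt{h^{3}\log T/T^{\bar\theta}}$ makes both $O_P(\sqrt{(\log T)/(T^{\bar\theta}h)})$.

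The heart of the matter is $R_3$. With $\Sigma_{\bar H,m}^2:=\mathbb{E}[|\sum_{i=1}^m(\bar H_i(x_k)-\mathbb{E}[\bar H_i(x_k)])|^2]$ for $m\le n-1$, I would reproduce the two-regime covariance split behind (\ref{prooflemmafefvariance}): cut the lag $\ell$ at $\Delta^{-1}h^{-2/\breve\beta}$, bound short-lag covariances by a constant multiple of $h^2$ using $\sup_{x,y}\pi_{t,t+s}(x,y)<\infty$, and long-lag covariances by $4\alpha(\ell\Delta)\Vert\bar H_1(x_k)\Vert_\infty^2\le 4\breve A\breve K^2(\ell\Delta)^{-\breve\beta}$ via the Billingsley inequality; together with the diagonal contribution $\mathbb{E}[\bar H_i^2(x_k)]=O(h)$ this yields $\Sigma_{\bar H,m}^2\le\bar C_2\,m\,\Delta^{-1}h^{2-2/\breve\beta}$ up to a lower-order term — i.e. (\ref{prooflemmafefvariance}) inflated by $h^{-2}$, reflecting the absence of a $\sigma^2$-type multiplier here. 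Feeding this into the exponential/Bernstein inequality for $\alpha$-mixing arrays with block length $m$ of order $\sqrt{T^{\bar\theta}/(h\log T)}$, and translating through $n=T/\Delta$, $\Delta^{-1}=O(T^{\bar\kappa})$ and $h=O(T^{-\bar\theta}\log T)$, I expect $P[R_3\ge a\sqrt{(\log T)/(T^{\bar\theta}h)}]$ to be bounded by $\bar v(n)$ times the sum of an exponential term $\lesssim T^{-ca}$ and a mixing-remainder term $\lesssim n\,m^{-1-\breve\beta}\Delta^{-\breve\beta}$; the hypothesis $\breve\beta>(2+\tfrac{1}{2+q})\bar\theta/(1-\bar\theta)$ is exactly what makes this sum vanish for $a$ large, giving $R_3=O_P(\sqrt{(\log T)/(T^{\bar\theta}h)})$ and hence part~(1).

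Part~(2) runs along the same lines with $\check H_i(x):=K((X_{i\Delta}-x)/h)(b(X_{i\Delta})-b(x))$ in place of $\bar H_i$. Because $b$ is Lipschitz, on the support of the kernel $|b(X_{i\Delta})-b(x)|\le C\bar c_K h$, so $\check H_i$ carries an extra factor $h$: $\Vert\check H_i(x_k)\Vert_\infty=O(h)$, $\mathbb{E}[\check H_i^2(x_k)]=O(h^3)$, the short-lag covariances are $O(h^4)$, and the variance bound sharpens to $\Sigma_{\check H,m}^2\le\bar C_2\,m\,\Delta^{-1}h^{4-2/\breve\beta}$ (the uniform boundedness of $b$ serving to keep the relevant constants finite); the same extra $h$ cancels the $h^{-1}$ in $R_1$, so $R_1=O_P(\tilde b_n^{-(2+q)})$ with a correspondingly smaller $\tilde b_n$. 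Re-running the Bernstein step with this improved variance gives the conclusion under the weaker requirement $\breve\beta>(2+\tfrac{1}{2+q})\bar\theta/(2-\bar\theta)$.

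I expect the main obstacle to be the $R_3$ step: pinning down the correct exponent in $\Sigma_{\bar H,m}^2$ (the split point $\Delta^{-1}h^{-2/\breve\beta}$ has to be tuned to balance the two covariance regimes) and then calibrating the block length $m$ so that the Bernstein exponential term decays like a negative power of $T$ while the mixing remainder $\bar v(n)\,n\,m^{-1-\breve\beta}\Delta^{-\breve\beta}$ is simultaneously summable — it is precisely the interplay of $\bar\theta$, $\bar\kappa$ and $q$ (the last entering through the tail radius $\tilde b_n$) in this last bound that produces the displayed lower bounds on $\breve\beta$. A secondary nuisance is the bookkeeping needed to express the whole argument in terms of $T$ rather than $n$, via $n=T/\Delta$ and $\Delta^{-1}=O(T^{\bar\kappa})$.
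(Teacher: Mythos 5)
Your proposal follows exactly the scheme the paper intends: the paper omits the proof of this lemma, and your three-way decomposition (tail over $|x|>\tilde b_n$ via the compact support of $K$ and the $(2+q)$-th moment, within-ball oscillation via the Lipschitz continuity of $K$, grid-point maximum via the two-regime covariance split and the Bernstein inequality for $\alpha$-mixing arrays) is precisely the argument used for Lemma \ref{lemmafef}, with the correct adjustments of $\tilde b_n$, $\tilde r_n$, the sup-norm bound, and the variance bound to the $\sqrt{(\log T)/(T^{\bar\theta}h)}$ rate and to the absence (part (1)) or presence (part (2)) of the extra factor $h$ contributed by the Lipschitz multiplier.

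One concrete caveat on the calibration you defer: if the block length is taken as a \emph{number of observations} $m\asymp\sqrt{T^{\bar\theta}/(h\log T)}$, the mixing remainder $\bar v(n)\,n\,m^{-1-\breve\beta}\Delta^{-\breve\beta}$ retains the factor $\Delta^{-\breve\beta}=O(T^{\bar\kappa\breve\beta})$, which cannot be absorbed by the stated, $\bar\kappa$-free lower bound on $\breve\beta$ when $\bar\kappa$ is large (the lemma allows any $\bar\kappa>0$). The block length must instead be fixed in continuous time, i.e.\ one chooses $\tau=m\Delta$, so that $n\,m^{-1-\breve\beta}\Delta^{-\breve\beta}=\tfrac{n}{m}(m\Delta)^{-\breve\beta}=T\tau^{-1-\breve\beta}$ is free of $\Delta$; this is also why the hypotheses of this lemma, unlike those of Lemma \ref{lemmafef}, involve only $\bar\theta$ and not $\bar\kappa$. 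With that modification (and the paper's device of keeping the $h$-exponent explicit rather than crudely bounding $h^{-1}\le T^{\bar\theta}/\log T$ everywhere), the rest of your argument goes through as described.
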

\begin{lemma}
\label{lemmafsfi}
Under the conditions stated in Lemmas $\ref{lemmafydw}$ and $\ref{lemmafef}$, suppose that
$|(\sigma^{2})''(x)|=O(|x|)$ as $|x|\to\infty$.
Then, as $\Delta, h \rightarrow 0$ and $n, T \rightarrow \infty$, we can obtain,
\begin{align*}
&\sup\limits_{x\in \mathbb{R}}
\bigg|\frac{1}{T} \sum_{i=1}^{n-1} K_{h}\left(X_{i\Delta}-x\right)
\int_{i\Delta}^{(i+1)\Delta} \left( \sigma^{2}(X_{t})-\sigma^{2}(X_{i\Delta})\right) dt \bigg|
=O_{P}(\Delta h^{-\frac{1}{q+1}}+\sqrt{(\log n)^{3}/(nh)}).
\end{align*}
\end{lemma}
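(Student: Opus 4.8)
The plan is to expand the increment $\sigma^{2}(X_{t})-\sigma^{2}(X_{i\Delta})$ by It\^{o}'s formula and to treat the bounded-variation part and the martingale part separately. Writing $c(x):=(\sigma^{2})'(x)\,b(x)+\tfrac12(\sigma^{2})''(x)\,\sigma^{2}(x)$, It\^{o}'s formula together with the stochastic Fubini theorem gives
\begin{flalign*}
&\int_{i\Delta}^{(i+1)\Delta}\big(\sigma^{2}(X_{t})-\sigma^{2}(X_{i\Delta})\big)\,dt \\
&\quad=\int_{i\Delta}^{(i+1)\Delta}\int_{i\Delta}^{t}c(X_{s})\,ds\,dt
+\int_{i\Delta}^{(i+1)\Delta}\big((i+1)\Delta-s\big)(\sigma^{2})'(X_{s})\,\sigma(X_{s})\,dW_{s}.
\end{flalign*}
Substituting this into the kernel average and using the triangle inequality bounds the quantity in the lemma by $\mathcal{D}+\mathcal{M}$, where $\mathcal{D}$ collects the double time integral and $\mathcal{M}$ the stochastic integral term. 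I will show $\sup_{x}\mathcal{D}=O_{P}(\Delta h^{-1/(q+1)})$ and $\sup_{x}\mathcal{M}=O_{P}(\sqrt{(\log n)^{3}/(nh)})$, so that their sum is of the asserted order.

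For the drift part, $\big|\int_{i\Delta}^{(i+1)\Delta}\int_{i\Delta}^{t}c(X_{s})\,ds\,dt\big|\le\tfrac{\Delta^{2}}{2}\sup_{s\in[i\Delta,(i+1)\Delta]}|c(X_{s})|$. Since $\sigma$ is uniformly bounded and Lipschitz, $(\sigma^{2})'=2\sigma\sigma'$ and $\sigma^{2}$ are bounded, so with the hypothesis $|(\sigma^{2})''(x)|=O(|x|)$ and the linear growth of $b$ one has $|c(x)|\le C(1+|x|)$; combined with the global modulus of continuity (\ref{result-incrementop}) this gives $\sup_{s\in[i\Delta,(i+1)\Delta]}|c(X_{s})|\le C(1+|X_{i\Delta}|)$ almost surely for all sufficiently small $\Delta$. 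Using $T=n\Delta$, it then remains to bound $\frac{\Delta}{nh}\sup_{x}\sum_{i=1}^{n-1}|K((X_{i\Delta}-x)/h)|(1+|X_{i\Delta}|)$. I would split this sum at the level $\bar{\psi}_{n}:=h^{-1/(q+1)}$. On $\{|X_{i\Delta}|\le\bar{\psi}_{n}\}$ one bounds $1+|X_{i\Delta}|\le 1+\bar{\psi}_{n}$ and uses $\sup_{x}\tfrac{1}{nh}\sum_{i}|K((X_{i\Delta}-x)/h)|=O_{P}(1)$, which follows from $\sup_{x}\pi(x)<\infty$ together with Lemma \ref{lemmafef}(2) (equivalently Lemma \ref{lemmafefmu}(1)); this contributes $O_{P}(\Delta\bar{\psi}_{n})=O_{P}(\Delta h^{-1/(q+1)})$. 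On $\{|X_{i\Delta}|>\bar{\psi}_{n}\}$ one bounds the kernel by $\breve{K}$ and uses $(1+|X_{i\Delta}|)\mathbb{I}_{\{|X_{i\Delta}|>\bar{\psi}_{n}\}}\le 2|X_{i\Delta}|^{2+q}/\bar{\psi}_{n}^{\,1+q}$; since $\bar{\psi}_{n}^{\,1+q}=h^{-1}$ and $\tfrac1n\sum_{i}|X_{i\Delta}|^{2+q}=O_{P}(\mathbb{E}[|X_{t}|^{2+q}])=O_{P}(1)$, this contributes only $O_{P}(\Delta)=o_{P}(\Delta h^{-1/(q+1)})$. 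Hence $\sup_{x}\mathcal{D}=O_{P}(\Delta h^{-1/(q+1)})$.

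For the martingale part, $\mathcal{M}=\sup_{x}\big|\tfrac1T\sum_{i=1}^{n-1}K_{h}(X_{i\Delta}-x)\int_{i\Delta}^{(i+1)\Delta}((i+1)\Delta-s)(\sigma^{2})'(X_{s})\sigma(X_{s})\,dW_{s}\big|$, which is of the form of the martingale $\widehat{M}_{T}(x)$ discussed in Remark \ref{remarkdiffusion} with adapted integrand $((i+1)\Delta-s)(\sigma^{2})'(X_{s})\sigma(X_{s})$. As this integrand is bounded by $C\Delta\le\sqrt{\Delta}\log(1/\Delta)$ for small $\Delta$, the truncation indicator there is identically $1$, so no cut-off of the type $e_{\Delta,i}$ is needed and one may run the proof of Lemma \ref{lemmafydw} directly (indeed in a simplified form): cover $\{K((\cdot-x)/h):x\in\mathbb{R}\}$ by $\breve{v}(h)\le\Lambda\varepsilon^{-4}$ balls via Lemma \ref{lemma-covering number}, control the covering error by a H\"{o}lder/BDG estimate, and for each centre $x_{k}$ apply the exponential inequality for continuous martingales together with the Bernstein inequality for strong-mixing arrays to the quadratic variation $\sum_{i}K^{2}((X_{i\Delta}-x_{k})/h)\int_{i\Delta}^{(i+1)\Delta}((i+1)\Delta-s)^{2}[(\sigma^{2})'(X_{s})]^{2}\sigma^{2}(X_{s})\,ds$, using $\mathbb{E}[\sum_{i}K^{2}((X_{i\Delta}-x)/h)]=O(nh)$. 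Choosing $\varepsilon$ and the block length as in the proof of Lemma \ref{lemmafydw} and invoking $\breve{\beta}>(2+3\theta)/(1-\theta-\kappa)$ yields $\sup_{x}\mathcal{M}=O_{P}(\sqrt{(\log n)^{3}/(nh)})$ (in fact of the smaller order $\Delta^{1/2}\sqrt{(\log n)^{3}/(nh)}$). Combining the two bounds gives the claim.

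The step I expect to be the main obstacle is the martingale part: transplanting the covering-number and exponential-inequality machinery of Lemma \ref{lemmafydw} so that the bound is uniform over the unbounded index set $x\in\mathbb{R}$ — in particular balancing the covering number $\varepsilon^{-4}$ against the strong-mixing Bernstein tail with an appropriate choice of $\varepsilon$ and block length, and checking the attendant moment and growth estimates. A subsidiary difficulty is ensuring, in the drift part, that the almost-sure modulus-of-continuity bound (\ref{result-incrementop}), which holds only for $\Delta$ below a random threshold, can be applied uniformly in $x$ inside the kernel average.
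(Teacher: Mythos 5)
Your proposal is correct and follows essentially the same route as the paper: an It\^{o} decomposition of $\sigma^{2}(X_{t})-\sigma^{2}(X_{i\Delta})$ into the drift term $\mathcal{A}\sigma=(\sigma^{2})'b+\tfrac12(\sigma^{2})''\sigma^{2}$ and a stochastic-integral term, a truncation at level $h^{-1/(q+1)}$ combined with $\sup_{x}\bigl|\tfrac{1}{n}\sum_{i}K_{h}(X_{i\Delta}-x)\bigr|=O_{P}(1)$ for the drift part, and the covering-number plus exponential/Bernstein machinery of Lemma \ref{lemmafydw} for the martingale part. The only cosmetic differences (stochastic Fubini giving the weight $((i+1)\Delta-s)$ instead of the iterated integral, and truncating on $|X_{i\Delta}|$ rather than $|X_{s}|$) do not change the argument or the resulting rate.
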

\begin{proof}
Applying It$\mathrm{\hat{o}}$ formula, we can get,
\begin{align*}
\sigma^{2}(X_{(i+1)\Delta})-\sigma^{2}(X_{i\Delta})
=\int_{i\Delta}^{(i+1)\Delta}\mathcal{A}\sigma(X_{t})dt
+\int_{i\Delta}^{(i+1)\Delta}(\sigma (\sigma^{2})')(X_{t}) dW_{t},
\end{align*}
where $\mathcal{A}\sigma=(\sigma^{2})'b +\frac{1}{2}(\sigma^{2})''\sigma^{2}$. Then, we have,
\begin{align*}
&\sup\limits_{x\in \mathbb{R}}
\bigg|\frac{1}{T} \sum_{i=1}^{n-1} K_{h}\left(X_{i\Delta}-x\right)
\int_{i\Delta}^{(i+1)\Delta}  \left( \sigma^{2}(X_{t})-\sigma^{2}(X_{i\Delta})\right)  dt \bigg|      \\
&\leq\sup\limits_{x\in \mathbb{R}}
\bigg|\frac{1}{T} \sum_{i=1}^{n-1} K_{h}\left(X_{i\Delta}-x\right)
\int_{i\Delta}^{(i+1)\Delta}\int_{i\Delta}^{t}\mathcal{A}\sigma(X_{s})dsdt \bigg|    \\
&~~+\sup\limits_{x\in \mathbb{R}}
\bigg|\frac{1}{T} \sum_{i=1}^{n-1} K_{h}\left(X_{i\Delta}-x\right)
\int_{i\Delta}^{(i+1)\Delta}\int_{i\Delta}^{t}(\sigma (\sigma^{2})')(X_{s}) dW_{s}dt \bigg|  \\
&:=\mathcal{Q}_{1}+\mathcal{Q}_{2}.
\end{align*}
Note that
\begin{align}
&\sup\limits_{x\in \mathbb{R}}
\left|\frac{1}{n} \sum_{i=1}^{n-1} K_{h}\left(X_{i\Delta}-x\right)\right|         \nonumber   \\
&\leq \sup\limits_{x\in \mathbb{R}}\left|\frac{1}{n} \sum_{i=1}^{n-1}
\left[K_{h}\left(X_{i\Delta}-x\right)-\mathbb{E}\left[K_{h}\left(X_{i\Delta}-x\right)\right]  \right]   \right|
+\sup\limits_{x\in \mathbb{R}}\left|\mathbb{E}\left[K_{h}\left(X_{i\Delta}-x\right)\right]     \right|  \label{prooflemmaK}  \\
&=O_{P}(1),     \nonumber
\end{align}
where the first term in (\ref{prooflemmaK}) is $o_{P}(1)$, as verified in Lemma \ref{lemmafef}.
The second term is $O_{P}(1)$, which follows from the boundedness of $\pi$.
Then, we have,
\begin{align*}
\mathcal{Q}_{1}&\leq \sup\limits_{x\in \mathbb{R}}
\bigg|\frac{\Delta}{T} \sum_{i=1}^{n-1} K_{h}\left(X_{i\Delta}-x\right)
\int_{i\Delta}^{(i+1)\Delta}|\mathcal{A}\sigma(X_{s})|ds\bigg|       \\
&\leq\sup\limits_{x\in \mathbb{R}}
\bigg|\frac{\Delta}{T} \sum_{i=1}^{n-1} K_{h}\left(X_{i\Delta}-x\right)
\int_{i\Delta}^{(i+1)\Delta}|\mathcal{A}\sigma(X_{s})|\mathbb{I}_{\{|X_{s}|\leq \bar{\phi}_{n}\}}ds\bigg|       \\
&~~+\sup\limits_{x\in \mathbb{R}}
\bigg|\frac{\Delta}{T} \sum_{i=1}^{n-1} K_{h}\left(X_{i\Delta}-x\right)
\int_{i\Delta}^{(i+1)\Delta}|\mathcal{A}\sigma(X_{s})|\mathbb{I}_{\{|X_{s}|> \bar{\phi}_{n}\}}ds\bigg| \\
&=O_{P}(\Delta\bar{\phi}_{n})+O_{P}(\Delta/h\bar{\phi}_{n}^{q}),
\end{align*}
where $\bar{\phi}_{n}$ is a sequence of positive real numbers.
Let $\bar{\phi}_{n}=h^{-\frac{1}{q+1}}$,  $\mathcal{Q}_{1}=O_{P}(\Delta h^{-\frac{1}{q+1}})$.
Applying the method of proving Lemma $\ref{lemmafydw}$, we can get $\mathcal{Q}_{2}=O_{P}(\sqrt{(\log n)^{3}/(nh)})$.
\end{proof}

\begin{lemma}
\label{lemmafyds}
Suppose that $(\ref{betaconditiondiff})$ and the conditions $(1)$-$(3)$ in Lemma $\ref{lemmafef}$ hold.
$|b ''(x)|=O(|x|)$ as $|x|\to\infty$.
Then, as $n, T \rightarrow \infty$ and $\Delta, h \rightarrow 0$,
\begin{align*}
&\sup\limits_{x\in \mathbb{R}}
\left|\frac{1}{T\Delta} \sum_{i=1}^{n-1} K_{h}\left(X_{i\Delta}-x\right)
\left[\int_{i\Delta}^{(i+1)\Delta}
b (X_{s})(Y_{s}-Y_{i\Delta})ds -\frac{\Delta^{2}}{2}X_{i\Delta}b (X_{i\Delta})\right] \right|     \\
&=O_{P}(\Delta h^{-\frac{1}{q+1}}+\sqrt{(\log n)^{3}/(nh)}).
\end{align*}
\end{lemma}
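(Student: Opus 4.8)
The plan is to reproduce the two-part strategy of Lemmas~\ref{lemmafsfi} and~\ref{lemmafydw}: isolate the genuinely biased (Lebesgue-integral) part of the increment and bound it with a truncation argument, and handle the remaining stochastic-integral part with the covering-number / continuous-martingale machinery already developed.

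First I would rewrite the bracket as a double time integral. Since $Y_s-Y_{i\Delta}=\int_{i\Delta}^{s}X_u\,du$ and $\frac{\Delta^2}{2}X_{i\Delta}b(X_{i\Delta})=\int_{i\Delta}^{(i+1)\Delta}\int_{i\Delta}^{s}X_{i\Delta}b(X_{i\Delta})\,du\,ds$, the bracket equals $\int_{i\Delta}^{(i+1)\Delta}\int_{i\Delta}^{s}\bigl(b(X_s)X_u-b(X_{i\Delta})X_{i\Delta}\bigr)\,du\,ds$ with $i\Delta\le u\le s\le(i+1)\Delta$. The key is to split the integrand \emph{through the intermediate value} $b(X_u)$:
\[
b(X_s)X_u-b(X_{i\Delta})X_{i\Delta}=X_u\bigl(b(X_s)-b(X_u)\bigr)+\bigl(g(X_u)-g(X_{i\Delta})\bigr),\qquad g(x):=xb(x),
\]
and then apply It\^o's formula to $b(X_s)-b(X_u)$ (drift $b'b+\tfrac12b''\sigma^2$, diffusion $b'\sigma$) and to $g(X_u)-g(X_{i\Delta})$ (drift $g'b+\tfrac12g''\sigma^2$ with $g'=b+xb'$, $g''=2b'+xb''$, diffusion $g'\sigma$). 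Substituting and interchanging the orders of the Lebesgue integrals, the sum over $i$ breaks into finitely many terms of two kinds: ``drift'' terms with only Lebesgue integrals, and ``martingale'' terms containing a stochastic integral. Because of the split above, after collapsing the nested $du\,ds$ integrals every martingale term takes the form $\frac1T\sum_{i=1}^{n-1}K_h(X_{i\Delta}-x)\int_{i\Delta}^{(i+1)\Delta}\varrho_{r,i}\,dW_r$ with $\varrho_{r,i}$ equal to $\Delta^{-1}(b'\sigma)(X_r)$ or $\Delta^{-1}(g'\sigma)(X_r)$ times a factor such as $(Y_r-Y_{i\Delta})((i+1)\Delta-r)$ or $\tfrac12((i+1)\Delta-r)^2$ — all of which depend on the path only up to time $r$, so $\varrho_{r,i}$ is $\mathcal F_r$-adapted and the rearrangement is a legitimate stochastic-Fubini step.

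For the drift terms I would take absolute values, pull out $\frac1{T\Delta}=\frac1{n\Delta^2}$, use $|Y_s-Y_{i\Delta}|\le\Delta\sup_u|X_u|$ and the localizing indicator $\bar e_{\Delta,i}=\mathbb I_{\{\sup_s|X_s-X_{i\Delta}|\le1\}}$ (so $\sup_u|X_u|\le|X_{i\Delta}|+1$ and, by the Lipschitz / linear-growth conditions on $b$, $|b''(x)|=O(|x|)$ and the boundedness of $\sigma$, the coefficients grow polynomially in $|X_{i\Delta}|$ on that event), and then split each integrand by $\{|X_{i\Delta}|\le\bar\phi_n\}$ versus $\{|X_{i\Delta}|>\bar\phi_n\}$. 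On the truncated part, $(\ref{prooflemmaK})$ gives $\frac1n\sum_i K_h(X_{i\Delta}-x)=O_P(1)$ uniformly in $x$, leaving $\Delta$ times a power of $\bar\phi_n$; on the tail part, $K_h\le\breve K/h$, $\mathbb E[|X_t|^{2+q}]<\infty$ and Markov's inequality give $O_P(\Delta/(h\bar\phi_n^{q}))$; balancing the truncation level as for $\mathcal Q_1$ in Lemma~\ref{lemmafsfi} produces the discretization term $O_P(\Delta h^{-1/(q+1)})$ in the statement. For the martingale terms, Lemma~\ref{lemmarhodwasbound} kills the contribution of $\{\bar e_{\Delta,i}=0\}$ almost surely for small $\Delta$; truncating the remaining factor ($|Y_r-Y_{i\Delta}|$ or $|(g'\sigma)(X_r)|$) at a level $\bar\psi_n$ with $\Delta\bar\psi_n\le\sqrt\Delta\log n$ (as in Lemma~\ref{lemmafydw}) makes $|\varrho_{r,i}|$ of order $\sqrt\Delta\log n$, the tail $\{|X_{i\Delta}|>\bar\psi_n\}$ is controlled by H\"older and BDG, and the main piece is treated as $V_2$ in the proof of Lemma~\ref{lemmafydw}: reduce to a finite cover via Lemma~\ref{lemma-covering number} and apply the exponential inequality for continuous martingales together with the Bernstein inequality for strong-mixing arrays, the condition $(\ref{betaconditiondiff})$ on $\breve\beta$ being exactly what makes the union bound vanish. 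This yields the $O_P(\sqrt{(\log n)^3/(nh)})$ contribution, and the two parts together give the stated rate.

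The step I expect to be the obstacle is the reordering in the second paragraph. The naive decomposition $b(X_s)X_u-b(X_{i\Delta})X_{i\Delta}=X_u(b(X_s)-b(X_{i\Delta}))+b(X_{i\Delta})(X_u-X_{i\Delta})$ produces, after It\^o, stochastic integrals whose $dW_r$-weight contains $Y_s-Y_{i\Delta}$ with $s$ running past $r$; that weight is $\mathcal F_{(i+1)\Delta}$- but not $\mathcal F_r$-measurable, so it cannot be collapsed to a single adapted stochastic integral over the block and neither Lemma~\ref{lemmarhodwasbound} nor the argument of Lemma~\ref{lemmafydw} applies. Splitting through $b(X_u)$ — equivalently, writing the second part as $g(X_u)-g(X_{i\Delta})$ — arranges that in every surviving stochastic-integral term the innermost variable $r$ dominates the reference point $i\Delta$ and every other time variable left after the $du\,ds$ collapse, which is precisely what restores adaptedness. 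A secondary technical point is the joint calibration of the truncation levels $\bar\phi_n$, $\bar\psi_n$ against the moment exponent $q$ and against the $\beta$-condition, so that the two stated error rates, and no larger term, emerge.
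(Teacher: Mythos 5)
Your proposal is correct and is essentially the paper's proof: your split through $b(X_u)$ with $g(x)=xb(x)$, followed by It\^o on each piece and stochastic Fubini, reproduces exactly the paper's It\^o product-rule expansion of $b(X_s)(Y_s-Y_{i\Delta})$ into $\int_{i\Delta}^{s}X_ub(X_u)du$ plus the $(bb')$, $(\sigma^2b'')$ and $(\sigma b')\,dW$ terms, i.e.\ the terms $\bar{\mathcal{Q}}_1$--$\bar{\mathcal{Q}}_4$. The subsequent treatment (truncation at $\bar\phi_n$ for the Lebesgue parts giving $\Delta h^{-1/(q+1)}$, and the covering-number plus exponential-martingale/Bernstein machinery of Lemma \ref{lemmafydw} for the adapted stochastic integrals giving $\sqrt{(\log n)^3/(nh)}$) matches the paper's argument, and your observation about why the adaptedness-preserving split is needed is accurate.
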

\begin{proof}
Applying It$\mathrm{\hat{o}}$ formula, we have,
\begin{align*}
b (X_{s})(Y_{s}-Y_{i\Delta})
&=\int_{i\Delta}^{s}X_{u}b (X_{u})du
+\int_{i\Delta}^{s}(b b ')(X_{u})(Y_{u}-Y_{i\Delta})du    \\
&~~+\frac{1}{2}\int_{i\Delta}^{s}(\sigma^{2}b '')(X_{u})(Y_{u}-Y_{i\Delta})du
+\int_{i\Delta}^{s}(\sigma b ')(X_{u})(Y_{u}-Y_{i\Delta})dW_{u},
\end{align*}
then,
\begin{align*}
&\sup\limits_{x\in \mathbb{R}}
\left|\frac{1}{T\Delta} \sum_{i=1}^{n-1} K_{h}\left(X_{i\Delta}-x\right)
\left[\int_{i\Delta}^{(i+1)\Delta}
 b (X_{s})(Y_{s}-Y_{i\Delta})ds -\frac{\Delta^{2}}{2}X_{i\Delta} b (X_{i\Delta})\right] \right|     \\
&\leq \sup\limits_{x\in \mathbb{R}}
\left|\frac{1}{T\Delta} \sum_{i=1}^{n-1} K_{h}\left(X_{i\Delta}-x\right)
\int_{i\Delta}^{(i+1)\Delta}\int_{i\Delta}^{s}\left(X_{u} b (X_{u})-X_{i\Delta} b (X_{i\Delta})\right)du  ds  \right|    \\
&~~+\sup\limits_{x\in \mathbb{R}}
\left|\frac{1}{T\Delta} \sum_{i=1}^{n-1} K_{h}\left(X_{i\Delta}-x\right)
\int_{i\Delta}^{(i+1)\Delta}
\int_{i\Delta}^{s}( b  b ')(X_{u})(Y_{u}-Y_{i\Delta})duds  \right|    \\
&~~+\sup\limits_{x\in \mathbb{R}}
\left|\frac{1}{2T\Delta} \sum_{i=1}^{n-1} K_{h}\left(X_{i\Delta}-x\right)
\int_{i\Delta}^{(i+1)\Delta}
\int_{i\Delta}^{s}(\sigma^{2} b '')(X_{u})(Y_{u}-Y_{i\Delta})du ds  \right|    \\
&~~+\sup\limits_{x\in \mathbb{R}}
\left|\frac{1}{T\Delta} \sum_{i=1}^{n-1} K_{h}\left(X_{i\Delta}-x\right)
\int_{i\Delta}^{(i+1)\Delta}
\int_{i\Delta}^{s}(\sigma b ')(X_{u})(Y_{u}-Y_{i\Delta})dW_{u}ds  \right|    \\
&:=\bar{\mathcal{Q}}_{1}+\bar{\mathcal{Q}}_{2}+\bar{\mathcal{Q}}_{3}+\bar{\mathcal{Q}}_{4}.
\end{align*}
Employing the method in Lemma $\ref{lemmafsfi}$ , we have $\bar{\mathcal{Q}}_{1}=O_{P}(\Delta h^{-\frac{1}{q+1}})+O_{P}(\sqrt{(\log n)^{3}/(nh)})$. Denote $\bar{f}= b  b '$, $\bar{\mathcal{Q}}_{2}$ can be bounded by
\begin{align*}
\bar{\mathcal{Q}}_{2}
&\leq \sup\limits_{x\in \mathbb{R}}
\left|\frac{1}{T} \sum_{i=1}^{n-1} K_{h}\left(X_{i\Delta}-x\right)
\int_{i\Delta}^{(i+1)\Delta}|\bar{f}(X_{t})|dt
\int_{i\Delta}^{(i+1)\Delta}|X_{t}|dt   \right|     \\
&\leq \sup\limits_{x\in \mathbb{R}}
\left|\frac{1}{T} \sum_{i=1}^{n-1} K_{h}\left(X_{i\Delta}-x\right)
\int_{i\Delta}^{(i+1)\Delta}C(1+|X_{t}|)dt
\int_{i\Delta}^{(i+1)\Delta}|X_{t}|dt   \right|     \\
&\leq \sup\limits_{x\in \mathbb{R}}
\left|\frac{\tilde{C}}{T} \sum_{i=1}^{n-1} K_{h}\left(X_{i\Delta}-x\right)
\left(\int_{i\Delta}^{(i+1)\Delta}|X_{t}|dt \right) ^{2}  \right| \\
&\leq \sup\limits_{x\in \mathbb{R}}
\left|\frac{\tilde{C}\Delta}{T} \sum_{i=1}^{n-1} K_{h}\left(X_{i\Delta}-x\right)
\int_{i\Delta}^{(i+1)\Delta}|X_{t}|^{2}dt \right|    \\
&\leq \sup\limits_{x\in \mathbb{R}}
\left|\frac{\tilde{C}\Delta}{T} \sum_{i=1}^{n-1} K_{h}\left(X_{i\Delta}-x\right)
\int_{i\Delta}^{(i+1)\Delta}|X_{t}|^{2}\mathbb{I}_{\{|X_{t}|^{2}\leq \tilde{\phi}_{n}\}} dt \right|    \\
&~~+ \sup\limits_{x\in \mathbb{R}}
\left|\frac{\tilde{C}\Delta}{T} \sum_{i=1}^{n-1} K_{h}\left(X_{i\Delta}-x\right)
\int_{i\Delta}^{(i+1)\Delta}|X_{t}|^{2}\mathbb{I}_{\{|X_{t}|^{2}> \tilde{\phi}_{n}\}} dt \right| \\
&=O_{P}(\Delta h^{-\frac{1}{q+1}}),
\end{align*}
where $\tilde{\phi}_{n}$ is a sequence of positive real numbers. $\tilde{C},C$ are some constants.
The proof of the last equality is similar to $\mathcal{Q}_{1}$ in Lemma $\ref{lemmafsfi}$, we omit it here.
Moreover, it is easy to obtain that $\bar{\mathcal{Q}}_{3}=O_{P}(\Delta h^{-\frac{1}{q+1}}) $,
and $\bar{\mathcal{Q}}_{4}=O_{P}(\sqrt{(\log n)^{3}/(nh)})$.
\end{proof}

\section{Proofs of the Main Theorems}
\begin{proof}[\textbf{Proof of Theorem \ref{thm-diffusion}}]
Denote
\begin{flalign}
&\hat{\sigma}^{2}(x)=\frac{ \frac{1}{T} \sum_{i=1}^{n-1} K_{h}(\breve{X}_{(i-1)\Delta}-x)
\frac{3}{2}
(\breve{X}_{(i+1)\Delta}-\breve{X}_{i\Delta})^{2}  }
{ \frac{\Delta}{T} \sum_{i=1}^{n-1} K_{h}(\breve{X}_{(i-1)\Delta}-x)}
:=\frac{C_{n}(x)}{\breve{\Pi}(x)}.
\end{flalign}
At first, we will show that
\begin{equation}
\label{prooftheoremsigmaup}
\sup\limits_{x\in \mathbb{R}}\left|C_{n}(x)-\breve{\Pi}(x)\sigma^{2}(x)\right| =O_{P}(a_{n,T}).
\end{equation}
Observe that,
\begin{flalign*}
&\Delta^{2}(\breve{X}_{(i+1) \Delta}-\breve{X}_{i \Delta})^{2}
=(Y_{(i+1)\Delta}-Y_{i\Delta})^{2}
+(Y_{i\Delta}-Y_{(i-1)\Delta})^{2}-2(Y_{(i+1)\Delta}-Y_{i\Delta})(Y_{i\Delta}-Y_{(i-1)\Delta}).
\end{flalign*}
Then,
\begin{flalign*}
&\sup\limits_{x\in \mathbb{R}}
\left|C_{n}(x)-\breve{\Pi}(x)\sigma^{2}(x)\right| \leq \bar{H}_{1}+\bar{H}_{2}+\bar{H}_{3}+\bar{H}_{4},
\end{flalign*}
where
\begin{flalign*}
\bar{H}_{1}&:=\sup\limits_{x\in \mathbb{R}}
\left| \sum_{i=1}^{n-1}
\left(K_{h}(\breve{X}_{(i-1)\Delta}-x)
-K_{h}(X_{i\Delta}-x) \right) \left[ \frac{3}{2T\Delta^{2}} \left[(Y_{(i+1)\Delta}-Y_{i\Delta})^{2}-\Delta^{2}X_{i\Delta}^{2}
\right. \right.\right. \\
&~~\left.  \left.\left.  +2\Delta X_{i\Delta}(Y_{i\Delta}-Y_{(i-1)\Delta})
-2(Y_{(i+1)\Delta}-Y_{i\Delta})(Y_{i\Delta}-Y_{(i-1)\Delta})  \right]
-\frac{1}{2n} \sigma^{2}(x)\right]  \right|,     \\  
\bar{H}_{2}&:=\sup\limits_{x\in \mathbb{R}}
\left| \sum_{i=1}^{n-1}
K_{h}(X_{i\Delta}-x) \left[ \frac{3}{2T\Delta^{2}} \left[(Y_{(i+1)\Delta}-Y_{i\Delta})^{2}-\Delta^{2}X_{i\Delta}^{2}
\right. \right.\right. \\
&~~\left.  \left.\left.  +2\Delta X_{i\Delta}(Y_{i\Delta}-Y_{(i-1)\Delta})
-2(Y_{(i+1)\Delta}-Y_{i\Delta})(Y_{i\Delta}-Y_{(i-1)\Delta})  \right]
-\frac{1}{2n} \sigma^{2}(x)\right]  \right| ,    \\ 
\bar{H}_{3}&:=\sup\limits_{x\in \mathbb{R}}
\left| \sum_{i=1}^{n-1}
\left(K_{h}(\breve{X}_{(i-1)\Delta}-x)
-K_{h}(X_{(i-1)\Delta}-x) \right)
\left[\frac{3}{2T\Delta^{2}}(Y_{i\Delta}-Y_{(i-1)\Delta}-\Delta X_{i \Delta})^{2}
-\frac{1}{2n} \sigma^{2}(x)\right]  \right|,        \\   
\bar{H}_{4}&:=\sup\limits_{x\in \mathbb{R}}
\left|\sum_{i=1}^{n-1}
K_{h}(X_{(i-1)\Delta}-x)\left[\frac{3}{2T\Delta^{2}}(Y_{i\Delta}-Y_{(i-1)\Delta}-\Delta X_{i \Delta})^{2}
-\frac{1}{2n} \sigma^{2}(x)\right] \right| .          
\end{flalign*}

We analyze $\bar{H}_{2}$ at first,
applying It$\mathrm{\hat{o}}$ formula, we have,
\begin{flalign*}
&(Y_{(i+1)\Delta}-Y_{i\Delta})^{2} - \Delta^{2} X_{i\Delta}^{2}    \\
&=\int_{i\Delta}^{(i+1)\Delta}\int_{i\Delta}^{t}\int_{i\Delta}^{s}
\left(4X_{u} b (X_{u})+2\sigma^{2}(X_{u}) \right)dudsdt
+\int_{i\Delta}^{(i+1)\Delta}\int_{i\Delta}^{t}
2 b (X_{s})(Y_{s}-Y_{i\Delta})dsdt  \\
&~~+\int_{i\Delta}^{(i+1)\Delta}\int_{i\Delta}^{t}\int_{i\Delta}^{s}
4X_{u}\sigma(X_{u})dW_{u}dsdt
+\int_{i\Delta}^{(i+1)\Delta}\int_{i\Delta}^{t}
2\sigma(X_{s})(Y_{s}-Y_{i\Delta})dW_{s}dt ,
\end{flalign*}
and,
\begin{flalign*}
&2\left(Y_{(i+1)\Delta}-Y_{i\Delta}\right)
\left(Y_{i\Delta}-Y_{(i-1)\Delta}\right)-2\Delta X_{i\Delta}\left(Y_{i\Delta}-Y_{(i-1)\Delta}\right)\\
&=\int_{i\Delta}^{(i+1)\Delta}\int_{i\Delta}^{t}
2 b (X_{s}) dsdt
(Y_{i\Delta}-Y_{(i-1)\Delta})
+\int_{i\Delta}^{(i+1)\Delta}\int_{i\Delta}^{t}
2\sigma(X_{s}) (Y_{i\Delta}-Y_{(i-1)\Delta})  dW_{s}dt.
\end{flalign*}
Therefore,
\begin{flalign*}
\bar{H}_{2}\leq \bar{H}_{21}+\bar{H}_{22}+\bar{H}_{23}+\bar{H}_{24},
\end{flalign*}
where
\begin{flalign*}
\bar{H}_{21}&:=\sup\limits_{x\in \mathbb{R}}
\left|\frac{3}{2T\Delta^{2}} \sum_{i=1}^{n-1}
K_{h}(X_{i\Delta}-x)\left[
\left(\int_{i\Delta}^{(i+1)\Delta}\int_{i\Delta}^{t}\int_{i\Delta}^{s}
4X_{u} b (X_{u})dudsdt -\frac{2\Delta^{3}}{3}X_{i\Delta} b (X_{i\Delta})\right) \right.  \right.    \\
&~~\left.\left.
+\left(\int_{i\Delta}^{(i+1)\Delta}\int_{i\Delta}^{t}
2 b (X_{s})(Y_{s}-Y_{i\Delta})dsdt-\frac{\Delta^{3}}{3}X_{i\Delta} b (X_{i\Delta})\right) \right.\right. \\
&~~\left.\left.-\left(\int_{i\Delta}^{(i+1)\Delta}\int_{i\Delta}^{t}
2 b (X_{s})(Y_{i\Delta}-Y_{(i-1)\Delta})dsdt-\Delta^{3}X_{i\Delta} b (X_{i\Delta})\right) \right.\right. \\
&~~\left.\left.+\left(\int_{i\Delta}^{(i+1)\Delta}\int_{i\Delta}^{t}\int_{i\Delta}^{s}
2\sigma^{2}(X_{u})dudsdt-\frac{\Delta^{3}}{3}\sigma^{2}(X_{i\Delta})\right) \right] \right|,     \\ 
\bar{H}_{22}&:=\sup\limits_{x\in \mathbb{R}}
\left|\frac{3}{2T\Delta^{2}} \sum_{i=1}^{n-1}
K_{h}(X_{i\Delta}-x)\left[
\int_{i\Delta}^{(i+1)\Delta}\int_{i\Delta}^{t}\int_{i\Delta}^{s}
4(X_{u}\sigma(X_{u})-X_{i\Delta}\sigma(X_{i\Delta})) dW_{u}dsdt \right.  \right.    \\
&~~\left.\left.+\int_{i\Delta}^{(i+1)\Delta}\int_{i\Delta}^{t}
2(\sigma(X_{s})(Y_{s}-Y_{i\Delta})-(s-i\Delta)X_{i\Delta}\sigma(X_{i\Delta}))  dW_{s} dt \right.  \right. \\
&~~\left.\left.-\int_{i\Delta}^{(i+1)\Delta}\int_{i\Delta}^{t}
2(\sigma(X_{s})(Y_{i\Delta}-Y_{(i-1)\Delta})-
\Delta X_{i\Delta}\sigma(X_{i\Delta}) )  dW_{s} dt \right] \right|,  \\ 
\bar{H}_{23}&:=\sup\limits_{x\in \mathbb{R}}
\left|\frac{\Delta}{2T} \sum_{i=1}^{n-1}
\left[K_{h}(X_{i\Delta}-x)
\left(\sigma^{2}(X_{i\Delta})-\sigma^{2}(x)\right)
-\mathbb{E}\left[K_{h}(X_{i\Delta}-x)\left(\sigma^{2}(X_{i\Delta})-\sigma^{2}(x)\right)\right]\right]\right|, \\ 
\bar{H}_{24}&:=\sup\limits_{x\in \mathbb{R}}
\left|\frac{\Delta}{2T} \sum_{i=1}^{n-1}
\mathbb{E}\left[K_{h}(X_{i\Delta}-x)\left(\sigma^{2}(X_{i\Delta})-\sigma^{2}(x)\right)\right]\right|.  
\end{flalign*}
Applying Lemma $\ref{lemmafsfi}$ and \ref{lemmafyds}, we can get $\bar{H}_{21}=O_{P}(\Delta h^{-\frac{1}{q+1}})+O_{P}(\sqrt{(\log n)^{3}/(nh)})$. With Lemma \ref{lemmafydw}, we have $\bar{H}_{22}=O_{P}(\sqrt{(\log n)^{3}/(nh)})$. Moreover, $\bar{H}_{23}=O_{P}(\sqrt{(\log n)^{3}/(nh)})$, which follows from Lemma \ref{lemmafef}.
$\bar{H}_{24}=O(h^{2})$ by simple calculation.
For $\bar{H}_{4}$, note that
\begin{flalign*}
&2 \Delta X_{i\Delta}(Y_{i\Delta}-Y_{(i-1)\Delta})   \\
&=2\Delta^{2} X_{(i-1)\Delta}^{2}
+\Delta\int_{(i-1)\Delta}^{i\Delta}\int_{(i-1)\Delta}^{t}
\left(4X_{s} b (X_{s})+2\sigma^{2}(X_{s})\right)dsdt
+\Delta\int_{(i-1)\Delta}^{i\Delta}\int_{(i-1)\Delta}^{t}
4X_{s}\sigma(X_{s})  dW_{s}dt           \\
&~~+\Delta\int_{(i-1)\Delta}^{i\Delta}
2 b (X_{t})(Y_{t}-Y_{(i-1)\Delta})dt
+\Delta\int_{(i-1)\Delta}^{i\Delta}
2\sigma(X_{t})(Y_{t}-Y_{(i-1)\Delta})dW_{t}.
\end{flalign*}
Furthermore, it is easy to obtain
\begin{flalign*}
&(Y_{i\Delta}-Y_{(i-1)\Delta}-\Delta X_{i\Delta})^{2}     \\
&=\left(\Delta^{2} \int_{(i-1)i\Delta}^{i\Delta}2X_{t} b (X_{t})dt-2\Delta^{3}X_{(i-1)\Delta} b (X_{(i-1)\Delta})\right)   \\
&~~+\left(\int_{(i-1)i\Delta}^{i\Delta}\int_{(i-1)\Delta}^{t}\int_{(i-1)\Delta}^{s}
4X_{u} b (X_{u})dudsdt -\frac{2\Delta^{3}}{3}X_{(i-1)\Delta} b (X_{(i-1)\Delta})\right) \\
&~~+\left(\int_{(i-1)\Delta}^{i\Delta}\int_{(i-1)\Delta}^{t}
2 b (X_{s})(Y_{s}-Y_{(i-1)\Delta})dsdt -\frac{\Delta^{3}}{3}X_{(i-1)\Delta} b (X_{(i-1)\Delta})  \right)   \\
&~~-\left(\Delta\int_{(i-1)\Delta}^{i\Delta}\int_{(i-1)\Delta}^{t}
4X_{s} b (X_{s})dsdt -2\Delta^{3}X_{(i-1)\Delta} b (X_{(i-1)\Delta}) \right) \\
&~~-\left(\Delta\int_{(i-1)\Delta}^{i\Delta}
2 b (X_{t})(Y_{t}-Y_{(i-1)\Delta})dt- \Delta^{3}X_{(i-1)\Delta} b (X_{(i-1)\Delta}) \right)  \\
&~~+\left(\Delta^{2}
\int_{(i-1)i\Delta}^{i\Delta}\sigma^{2}(X_{t})dt -\Delta^{3} \sigma^{2}(X_{(i-1)\Delta})\right)
-\left(\Delta\int_{(i-1)\Delta}^{i\Delta}\int_{(i-1)\Delta}^{t}
2\sigma^{2}(X_{s})dsdt-\Delta^{3} \sigma^{2}(X_{(i-1)\Delta}) \right)      \\
&~~+\left(\int_{(i-1)i\Delta}^{i\Delta}\int_{(i-1)\Delta}^{t}\int_{(i-1)\Delta}^{s}
2\sigma^{2}(X_{u})dudsdt -\frac{\Delta^{3}}{3}\sigma^{2}(X_{(i-1)\Delta}) \right)   \\
&~~+\Delta^{2} \int_{(i-1)i\Delta}^{i\Delta}
2\left(X_{t}\sigma(X_{t})-X_{(i-1)\Delta}\sigma(X_{(i-1)\Delta})\right) dW_{t}      \\
&~~+\int_{(i-1)\Delta}^{i\Delta}\int_{(i-1)\Delta}^{t}\int_{(i-1)\Delta}^{s}
4\left(X_{u}\sigma(X_{u})-X_{(i-1)\Delta}\sigma(X_{(i-1)\Delta})\right)dW_{u}dsdt      \\
&~~+\int_{(i-1)\Delta}^{i\Delta}\int_{(i-1)\Delta}^{t}
2\left(\sigma(X_{s})(Y_{s}-Y_{(i-1)\Delta})- (s-(i-1)\Delta)X_{(i-1)\Delta}\sigma(X_{(i-1)\Delta})\right) dW_{s}dt   \\
&~~-\Delta\int_{(i-1)\Delta}^{i\Delta}\int_{(i-1)\Delta}^{t}
4\left(X_{s}\sigma(X_{s})-X_{(i-1)\Delta}\sigma(X_{(i-1)\Delta})\right)  dW_{s}dt       \\
&~~-\Delta\int_{(i-1)\Delta}^{i\Delta}
2\left(\sigma(X_{t})(Y_{t}-Y_{(i-1)\Delta})-(t-(i-1)\Delta)X_{(i-1)\Delta}\sigma(X_{(i-1)\Delta})\right) dW_{t}.
\end{flalign*}
Similar as the proof of $\bar{H}_{2}$, we have
$\bar{H}_{4}=O_{P}(\Delta h^{-\frac{1}{q+1}}+\sqrt{(\log n)^{3}/(nh)}+h^{2})$.
For $\bar{H}_{1}$ and $\bar{H}_{3}$,
using mean-value theorem, we have,
\begin{flalign*}
&K_{h}(\breve{X}_{(i-1)\Delta}-x)-K_{h}(X_{i\Delta}-x)
=\frac{1}{h^{2}}
K'\left(\frac{X_{i\Delta}-x}{h}+\frac{\bar{\lambda}(\breve{X}_{(i-1)\Delta}-X_{i\Delta})}{h}\right)
\left(\breve{X}_{(i-1)\Delta}-X_{i\Delta}\right) ,
\end{flalign*}
where $0\leq\bar{\lambda}\leq 1$.  According to the conditions on the kernel function, we can construct a function $K^{*}(u)=K_{1}\mathbb{I}_{\{|u|\leq \bar{c}_{K}+\bar{\upsilon}\}}$ such that
$\sup_{|\upsilon|\leq \bar{\upsilon}}|K'(u+\upsilon)|\leq K^{*}(u)$, where $K_{1}>0$, $\bar{\upsilon}>0$, $\upsilon$  are some constants. Then,
\begin{flalign*}
\left|K'\left(\frac{X_{i\Delta}-x}{h}+\frac{\bar{\lambda}(\breve{X}_{(i-1)\Delta}-X_{i\Delta})}{h}\right)\right|
\mathbb{I}_{\{|(\bar{\lambda}(\breve{X}_{(i-1)\Delta}-X_{i\Delta}))/h|\leq\bar{\upsilon}\}}
\leq K^{*}\left(\frac{X_{i\Delta}-x}{h}\right).
\end{flalign*}
Note that $(\breve{X}_{(i-1)\Delta}-X_{i\Delta})/h=O_{a.s.}(\Delta^{\frac{1}{2}-\frac{1}{2+q}}/h)$.
Following from the proof of $\bar{H}_{2}$ and $\bar{H}_{4}$, we can get $\bar{H}_{1}$ and $\bar{H}_{3}$ are all of the order of $O_{P}(a_{n,T})$.
For the denominator $\breve{\Pi}(x)$ of $\hat{\sigma}^{2}(x)$, applying Lemma \ref{lemmafef}, it is easy to obtain,
\begin{flalign*}
&\sup\limits_{x\in \mathbb{R}}
\left|\breve{\Pi}(x)-\pi(x)\right|    \\
&\leq\sup\limits_{x\in \mathbb{R}}
\left|\frac{1}{n} \sum_{i=1}^{n-1} \left[
K_{h}\left(X_{(i-1)\Delta}-x\right)-\mathbb{E}\left[K_{h}\left(X_{(i-1)\Delta}-x\right)\right]\right]\right|
+\sup\limits_{x\in \mathbb{R}}
\left|\mathbb{E}\left[K_{h}\left(X_{(i-1)\Delta}-x\right)\right]-\pi(x)\right|   \\
&~~+\sup\limits_{x\in \mathbb{R}}
\left|\left[\frac{1}{n}  \sum_{i=1}^{n-1}\left( K_{h}\left(\breve{X}_{(i-1)\Delta}-x\right)
- K_{h}\left(X_{(i-1)\Delta}-x\right)\right)\right]\right|      \\
&=O_{P}(\sqrt{(\log n)/(n^{\theta}h)})+O_{P}(h^{2})+O_{P}(\Delta^{\frac{1}{2}-\frac{1}{2+q}}/h).
\end{flalign*}
Therefore,
\begin{flalign}
&\sup\limits_{|x|\leq \bar{b}_{n,T}}\left|\frac{\breve{\Pi}(x)}{\pi(x)}-1\right|
=\sup\limits_{|x|\leq \bar{b}_{n,T}}\left|\frac{\breve{\Pi}(x)-\pi(x)}{\pi(x)}\right|
\leq \frac{O_{P}(\sqrt{(\log n)/(n^{\theta}h)}+h^{2}+\Delta^{\frac{1}{2}-\frac{1}{2+q}}/h)}{\delta_{n,T}} .  \label{prooftheorempi}
\end{flalign}
Combining (\ref{prooftheoremsigmaup}) and (\ref{prooftheorempi}), we have,
\begin{flalign*}
\hat{\sigma}^{2}(x)-\sigma^{2}(x)
=\frac{(C_{n}(x)-\breve{\Pi}(x)\sigma^{2}(x))/\pi(x)}{\breve{\Pi}(x)/\pi(x)}
=\frac{O_{P}(a_{n,T}/\delta_{n,T})}{1+o_{P}(1)}
=O_{P}\left(\frac{a_{n,T}}{\delta_{n,T}}\right),
\end{flalign*}
uniformly over $|x|\leq \bar{b}_{n,T}$.
\end{proof}

\begin{proof}[\textbf{Proof of Theorem \ref{thm-drift}}]
Denote
\begin{flalign*}
\hat{ b }(x)=\frac{ \frac{1}{T} \sum_{i=1}^{n-1} K_{h}(\breve{X}_{(i-1)\Delta}-x)
(\breve{X}_{(i+1)\Delta}-\breve{X}_{i\Delta})  }
{ \frac{\Delta}{T} \sum_{i=1}^{n-1} K_{h}(\breve{X}_{(i-1)\Delta}-x)  }
:=\frac{A_{n}(x)}{\breve{\Pi}(x)}.
\end{flalign*}
To get the result, we need to show that
\begin{equation*}
\sup\limits_{x\in \mathbb{R}}|A_{n}(x)-\breve{\Pi}(x) b (x)|=O_{P}(\Delta^{\frac{1}{2}-\frac{1}{2+q}}+h^{2}+\sqrt{(\log T)/(T^{\bar{\theta}}h)}).
\end{equation*}
It is easy to obtain
\begin{equation*}
\sup\limits_{x\in \mathbb{R}}|A_{n}(x)-\breve{\Pi}(x) b (x)|\leq \tilde{A}_{1}+\tilde{A}_{2},
\end{equation*}
where
\begin{flalign*}
\tilde{A}_{1}&:=\sup\limits_{x\in \mathbb{R}}\left|
\frac{1}{T} \sum_{i=1}^{n-1}K_{h}(X_{(i-1)\Delta}-x)\left( (\breve{X}_{(i+1)\Delta}-\breve{X}_{i\Delta})-\Delta b (x)\right) \right|,                    \\
\tilde{A}_{2}&:=\sup\limits_{x\in \mathbb{R}}\left|
\frac{1}{T} \sum_{i=1}^{n-1}\left( K_{h}(\breve{X}_{(i-1)\Delta}-x)-K_{h}(X_{(i-1)\Delta}-x) \right)
\left( (\breve{X}_{(i+1)\Delta}-\breve{X}_{i\Delta})-\Delta b (x)\right)  \right|.
\end{flalign*}

We analyze $\tilde{A}_{1}$ in the first step.
Using It\^o formula, we have,
\begin{flalign*}
&\breve{X}_{(i+1) \Delta}-\breve{X}_{i \Delta}  \\
&=\frac{1}{\Delta}\int_{i\Delta}^{(i+1)\Delta}(X_{t}-X_{i\Delta})dt
-\frac{1}{\Delta}\int_{(i-1)\Delta}^{i\Delta}(X_{t}-X_{(i-1)\Delta})dt
+(X_{i\Delta}-X_{(i-1)\Delta})      \\
&=\int_{(i-1)\Delta}^{i\Delta} b (X_{t})dt
+\frac{1}{\Delta}  \int_{i\Delta}^{(i+1)\Delta}\int_{i\Delta}^{t} b (X_{s})dsdt
-\frac{1}{\Delta} \int_{(i-1)\Delta}^{i\Delta}\int_{(i-1)\Delta}^{t} b (X_{s})dsdt \\
&~~+\int_{(i-1)\Delta}^{i\Delta}\sigma(X_{t})dW_{t}
+\frac{1}{\Delta}  \int_{i\Delta}^{(i+1)\Delta}\int_{i\Delta}^{t}\sigma(X_{s})dW_{s}dt
-\frac{1}{\Delta} \int_{(i-1)\Delta}^{i\Delta}\int_{(i-1)\Delta}^{t}\sigma(X_{s})dW_{s}dt.
\end{flalign*}
Then, we can get,
\begin{flalign*}
\tilde{A}_{1}\leq \tilde{A}_{11}+\tilde{A}_{12}+\tilde{A}_{13}+\tilde{A}_{14},
\end{flalign*}
where
\begin{flalign*}
\tilde{A}_{11}&:=\sup\limits_{x\in \mathbb{R}}\left|
\frac{1}{T} \sum_{i=1}^{n-1}K_{h}(X_{(i-1)\Delta}-x)
\left(\int_{(i-1)\Delta}^{i\Delta} b (X_{t})dt-\Delta  b (X_{(i-1)\Delta})\right)\right.\\
&~~\left. +\left(\frac{1}{\Delta}  \int_{i\Delta}^{(i+1)\Delta}\int_{i\Delta}^{t} b (X_{s})dsdt
-\frac{\Delta}{2}  b (X_{i\Delta})\right)
-\left(\frac{1}{\Delta} \int_{(i-1)\Delta}^{i\Delta}\int_{(i-1)\Delta}^{t} b (X_{s})dsdt
-\frac{\Delta}{2}  b (X_{i\Delta})\right)\right|,           \\ 
\tilde{A}_{12}&:=\sup\limits_{x\in \mathbb{R}}\left|
\frac{1}{T} \sum_{i=1}^{n-1}K_{h}(X_{(i-1)\Delta}-x)
\int_{(i-1)\Delta}^{i\Delta}\sigma(X_{t})dW_{t} \right.\\
&~~\left.+\frac{1}{\Delta}  \int_{i\Delta}^{(i+1)\Delta}
\int_{i\Delta}^{t}\sigma(X_{s})dW_{s}dt
-\frac{1}{\Delta} \int_{(i-1)\Delta}^{i\Delta}\int_{(i-1)\Delta}^{t}
\sigma(X_{s})dW_{s}dt\right|,           \\  
\tilde{A}_{13}&:=\sup\limits_{x\in \mathbb{R}}\left|
\frac{\Delta}{T} \sum_{i=1}^{n-1}
\left[K_{h}(X_{(i-1)\Delta}-x)( b (X_{(i-1)\Delta})- b (x) )
-\mathbb{E}[K_{h}(X_{(i-1)\Delta}-x)
( b (X_{(i-1)\Delta})- b (x) )]\right] \right|,    \\   
\tilde{A}_{14}&:=\sup\limits_{x\in \mathbb{R}}\left|
\frac{\Delta}{T} \sum_{i=1}^{n-1}\mathbb{E}\left[K_{h}(X_{(i-1)\Delta}-x)
\left( b (X_{(i-1)\Delta})- b (x) \right)\right]\right|. 
\end{flalign*}
Following from the lipschitz condition of $ b $,
$\tilde{A}_{11}$ can be bounded by,
\begin{flalign*}
\frac{\tilde{C}_{ b }}{n}
\sup\limits_{x\in \mathbb{R}}\left|
\sum_{i=1}^{n-1}K_{h}(X_{(i-1)\Delta}-x)\right|\times
\sup_{|s-t|\in[0,\Delta],s,t\in[0,\infty)}|X_{s}-X_{t}|
=O_{P}(\Delta^{\frac{1}{2}-\frac{1}{2+q}}),
\end{flalign*}
where $\tilde{C}_{ b }>0$.
Applying Lemma \ref{lemmafdwtmu} and \ref{lemmafefmu},  we could get $\tilde{A}_{12}=O_{P}(\sqrt{(\log T)/(T^{\bar{\theta}}h)})$,
and $\tilde{A}_{13}=O_{P}(\sqrt{(\log T)/(T^{\bar{\theta}}h)})$.
$\tilde{A}_{14}=O(h^{2})$ by simple calculation.
Similar to the proof of $\bar{H}_{1}$ in Theorem \ref{thm-diffusion}, we could get $\bar{A}_{2}=O_{P}(a^*_{n,T})$.
$(\ref{conclusionthmdrift})$ can be obtained as $(\ref{conclusionthmdiffusion})$,
we omit it here for simplicity.
\end{proof}
\section{Simulation}
In this section, we investigate the finite-sample performance of the N-W estimators for the diffusion coefficients in two types of diffusion process: the Cox-Ingersoll-Ross (CIR) process and the Ornstein-Uhlenbeck (OU) process. The models are specified as follows.
\begin{example}[$\textbf{CIR process}$]
\begin{align*}
dY_{t}&=X_{t}dt,        \\
dX_{t}&=\kappa(\theta-X_{t})dt+\sigma\sqrt{X_{t}}dW_{t},
\end{align*}
where the parameters $(\kappa,\theta,\sigma)$ considered are $(0.85837, 0.085711, 0.15660)$, following Chapman and Pearson \cite{chapman2000short}.
\end{example}
\begin{example}[$\textbf{OU process}$]
\begin{align*}
dY_{t}&=X_{t}dt,        \\
dX_{t}&=\kappa(\theta-X_{t})dt+\sigma dW_{t},
\end{align*}
where the parameters are set to $\kappa=0.5$, $\theta = -2.75$, and $\sigma= 0.43$, consistent with Stanton \cite{stanton1997nonparametric}.
\end{example}
For both models, we assume that the process $\{Y_{t}\}$ is observed at discrete time points $t_{i}=i\Delta,i=0,1,\cdots,n$. To assess the performance of the diffusion coefficient estimator, we compare the estimator based on direct observations of $X$ (defined in (\ref{estimatorsigmainitial})) with the estimator constructed from discrete observations of $Y$ (defined in (\ref{estimator-sigma}))). The comparison is conducted under various choices of observation interval $\Delta$ and bandwidth $h$.
For nonparametric estimation, we employ the Epanechnikov kernel
$K(u)=\frac{3}{4}(1-u^{2})\mathbb{I}_{\{|u|\leq 1\}}$.
To evaluate the accuracy of the estimators, we use the maximum absolute error (MAAE), defined as
\begin{align*}
&\text{MAAE}=\frac{1}{L}\sum_{k=1}^{L}\max_{i=1,\cdots,N}\{
|\hat{\sigma}^{2}(x_{i}^{k})-\sigma^{2}(x_{i}^{k})|\},
\end{align*}
where $\{x_{i}\}_{i=1}^{N}$ are equidistant points within the range of $X$, set as $[0.078, 0.09]$ for the CIR process and $[-2.79, -2.7]$ for the OU process. We set  $N=50$.  The Monte Carlo simulations are conducted with $L=1000$ replications, and trajectories are generated using Euler discretization.

Tables \ref{resulttableCIR} and \ref{resulttableOU} present the estimation errors, where MAAE$^{1}$ corresponds to the estimator (\ref{estimatorsigmainitial}), and MAAE$^{2}$ corresponds to the estimator (\ref{estimator-sigma}). The results demonstrate that the estimation error decreases as the sample size $n$ increases, confirming the consistency of the estimator. Moreover, the choice of bandwidth significantly affects estimation accuracy, highlighting its role in the estimation accuracy.
Figures \ref{resultestimatorCIR} and \ref{resultestimatorOU} depict the mean estimated diffusion coefficient at different evaluation points, computed from $1000$ sample paths  under different values of $\Delta$ and $h$. The "true" curve represents the true diffusion coefficient function, while the estimated curves labeled "$n = a, 1$" and "$n = a, 2$" represent the performance of estimators (\ref{estimatorsigmainitial}) and (\ref{estimator-sigma}), respectively, for sample sizes
$a=1000,5000,9000$. As $n$ increases, the estimated curves align more closely with the true function.

To compare the simulation results with theoretical convergence rates, Figures \ref{comprisontheosimuCIR} and \ref{comprisontheosimuOU} illustrate the relationship between the theoretical error rate $((\log n)^{3}/n)^{2/5}$ (horizontal axis) and the simulated error MAAE (vertical axis) for different $\Delta$ and $h$. The dashed line represents the MAAE of the estimator (\ref{estimatorsigmainitial}), while the solid line corresponds to estimator (\ref{estimator-sigma}). The nearly linear trend between the theoretical rate and the observed error further validates the theoretical uniform convergence rate.
\begin{table}[H]
\renewcommand{\arraystretch}{1.2}
  \centering
\begin{tabular}{cccccccc}
\hline
\multicolumn{1}{c}{$\Delta$}            & h                     & Error                   & n=1000 & n=3000 & n=5000 & n=7000 & n=9000 \\ \hline
\multirow{6}{*}{$0.002$} & \multirow{2}{*}{0.03} & $\text{MAAE}^{1}\times 10^{3}$ & 0.2931 & 0.1538 & 0.1156 & 0.0993 & 0.0903 \\
                                &                       & $\text{MAAE}^{2}\times 10^{3}$ & 1.0745 & 1.0208 & 1.013  & 1.0105 & 1.0083 \\ \cline{2-8}
                                & \multirow{2}{*}{0.12} & $\text{MAAE}^{1}\times 10^{3}$ & 0.5959 & 0.4348 & 0.3597 & 0.3213 & 0.297  \\
                                &                       & $\text{MAAE}^{2}\times 10^{3}$ & 1.1923 & 1.1139 & 1.0961 & 1.0885 & 1.0873 \\ \cline{2-8}
                                & \multirow{2}{*}{0.3}  & $\text{MAAE}^{1}\times 10^{3}$ & 0.6461 & 0.4942 & 0.4156 & 0.3756 & 0.3488 \\
                                &                       & $\text{MAAE}^{2}\times 10^{3}$ & 1.254  & 1.1972 & 1.1875 & 1.181  & 0.1832 \\ \hline
\multirow{6}{*}{$0.004$} & \multirow{2}{*}{0.03} & $\text{MAAE}^{1}\times 10^{3}$ & 0.2112 & 0.1207 & 0.098  & 0.0848 & 0.0782 \\
                                &                       & $\text{MAAE}^{2}\times 10^{3}$ & 0.3036 & 0.2383 & 0.2296 & 0.2272 & 0.2241 \\ \cline{2-8}
                                & \multirow{2}{*}{0.12} & $\text{MAAE}^{1}\times 10^{3}$ & 0.4955 & 0.3448 & 0.2884 & 0.2589 & 0.2385 \\
                                &                       & $\text{MAAE}^{2}\times 10^{3}$ & 0.5613 & 0.4178 & 0.3756 & 0.3572 & 0.3484 \\ \cline{2-8}
                                & \multirow{2}{*}{0.3}  & $\text{MAAE}^{1}\times 10^{3}$ & 0.5573 & 0.3998 & 0.3388 & 0.3069 & 0.2843 \\
                                &                       & $\text{MAAE}^{2}\times 10^{3}$ & 0.6328 & 0.4919 & 0.4518 & 0.4336 & 0.4264 \\ \hline
\multirow{6}{*}{$0.008$} & \multirow{2}{*}{0.03} & $\text{MAAE}^{1}\times 10^{3}$ & 0.1757 & 0.104  & 0.0882 & 0.0801 & 0.0754 \\
                                &                       & $\text{MAAE}^{2}\times 10^{3}$ & 0.1788 & 0.1018 & 0.0801 & 0.0686 & 0.0622 \\ \cline{2-8}
                                & \multirow{2}{*}{0.12} & $\text{MAAE}^{1}\times 10^{3}$ & 0.4022 & 0.272  & 0.2331 & 0.2126 & 0.2016 \\
                                &                       & $\text{MAAE}^{2}\times 10^{3}$ & 0.408  & 0.2756 & 0.2367 & 0.2162 & 0.207  \\ \cline{2-8}
                                & \multirow{2}{*}{0.3}  & $\text{MAAE}^{1}\times 10^{3}$ & 0.4605 & 0.3209 & 0.2762 & 0.2508 & 0.2398 \\
                                &                       & $\text{MAAE}^{2}\times 10^{3}$ & 0.4709 & 0.3341 & 0.2932 & 0.272  & 0.2622 \\ \hline
\multirow{6}{*}{$0.01$}  & \multirow{2}{*}{0.03} & $\text{MAAE}^{1}\times 10^{3}$ & 0.1665 & 0.1037 & 0.0865 & 0.0807 & 0.0761 \\
                                &                       & $\text{MAAE}^{2}\times 10^{3}$ & 0.1647 & 0.0993 & 0.0777 & 0.0684 & 0.0623 \\ \cline{2-8}
                                & \multirow{2}{*}{0.12} & $\text{MAAE}^{1}\times 10^{3}$ & 0.369  & 0.2558 & 0.2187 & 0.2044 & 0.1931 \\
                                &                       & $\text{MAAE}^{2}\times 10^{3}$ & 0.3705 & 0.2601 & 0.2204 & 0.2041 & 0.1918 \\ \cline{2-8}
                                & \multirow{2}{*}{0.3}  & $\text{MAAE}^{1}\times 10^{3}$ & 0.4223 & 0.3018 & 0.2591 & 0.2412 & 0.2273 \\
                                &                       & $\text{MAAE}^{2}\times 10^{3}$ & 0.429  & 0.3134 & 0.2699 & 0.2534 & 0.2393  \\ \hline
\end{tabular}
  \caption{MAAE of the diffusion function estimators for CIR process}
  \label{resulttableCIR}
\end{table}
\begin{figure}[htbp]
  \subfigure{
  \begin{minipage}{0.32\linewidth}
  \centering
    \includegraphics[width=2.3in,height=1.9in]{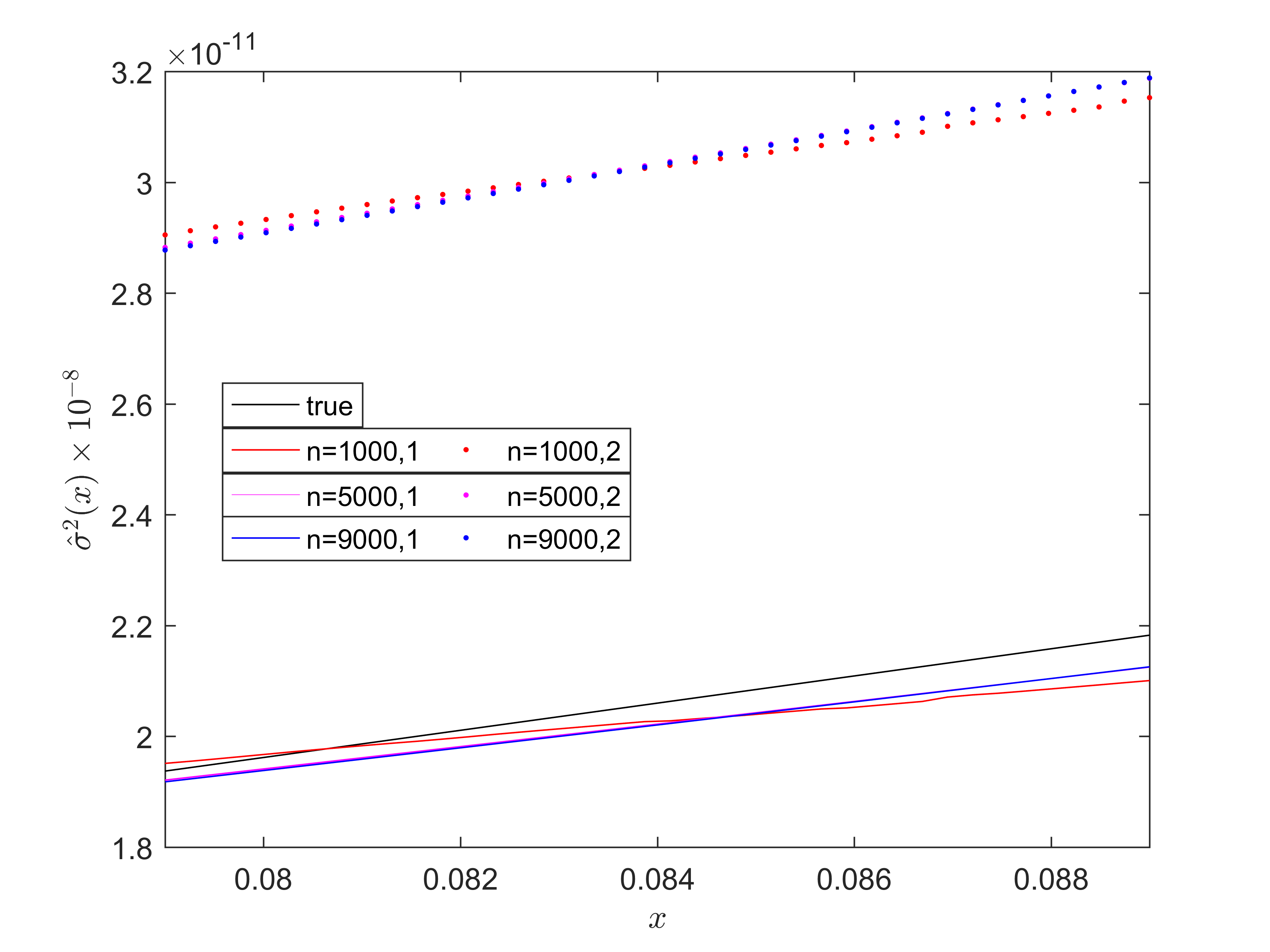}
    \caption*{$\Delta=0.002$, $h=0.03$}
  \end{minipage}
  }
  \subfigure{
    \begin{minipage}{0.32\linewidth}
    \includegraphics[width=2.3in,height=1.9in]{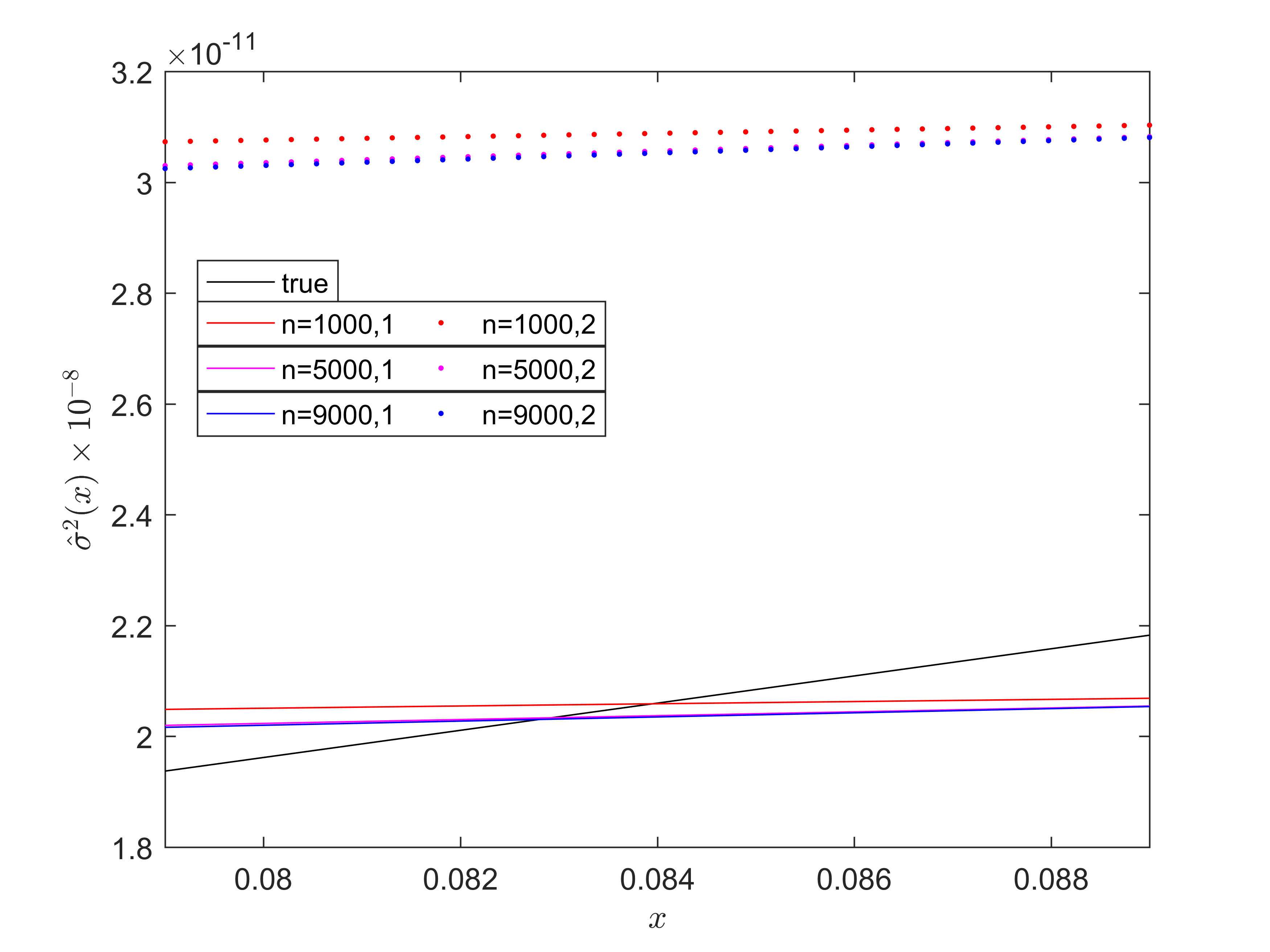}
    \caption*{$\Delta=0.002$, $h=0.12$}
  \end{minipage}
  }
  \subfigure{
    \begin{minipage}{0.32\linewidth}
    \includegraphics[width=2.3in,height=1.9in]{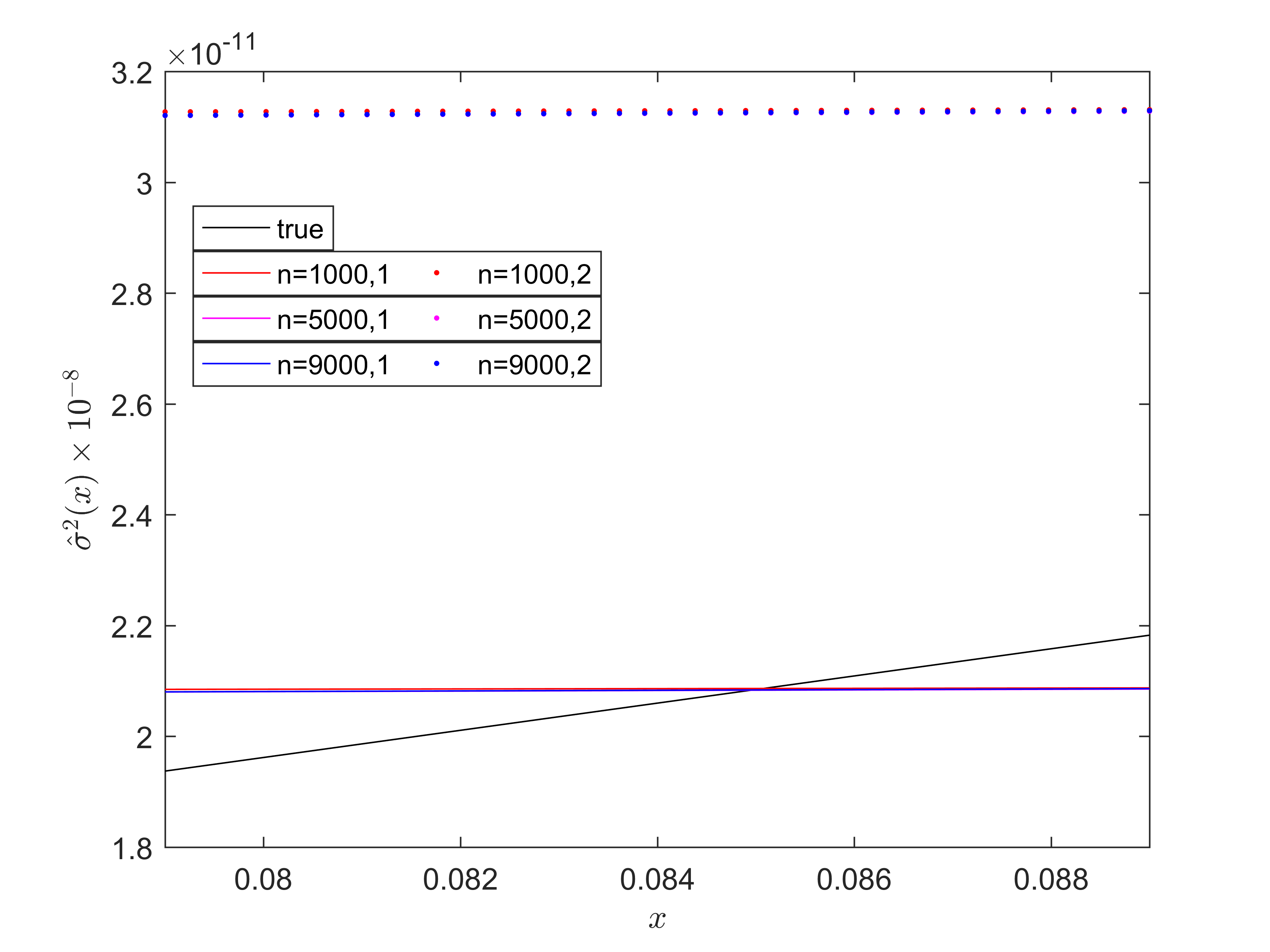}
    \caption*{$\Delta=0.002$, $h=0.3$}
  \end{minipage}
  }

  \subfigure{
  \begin{minipage}{0.32\linewidth}
  \centering
    \includegraphics[width=2.3in,height=1.9in]{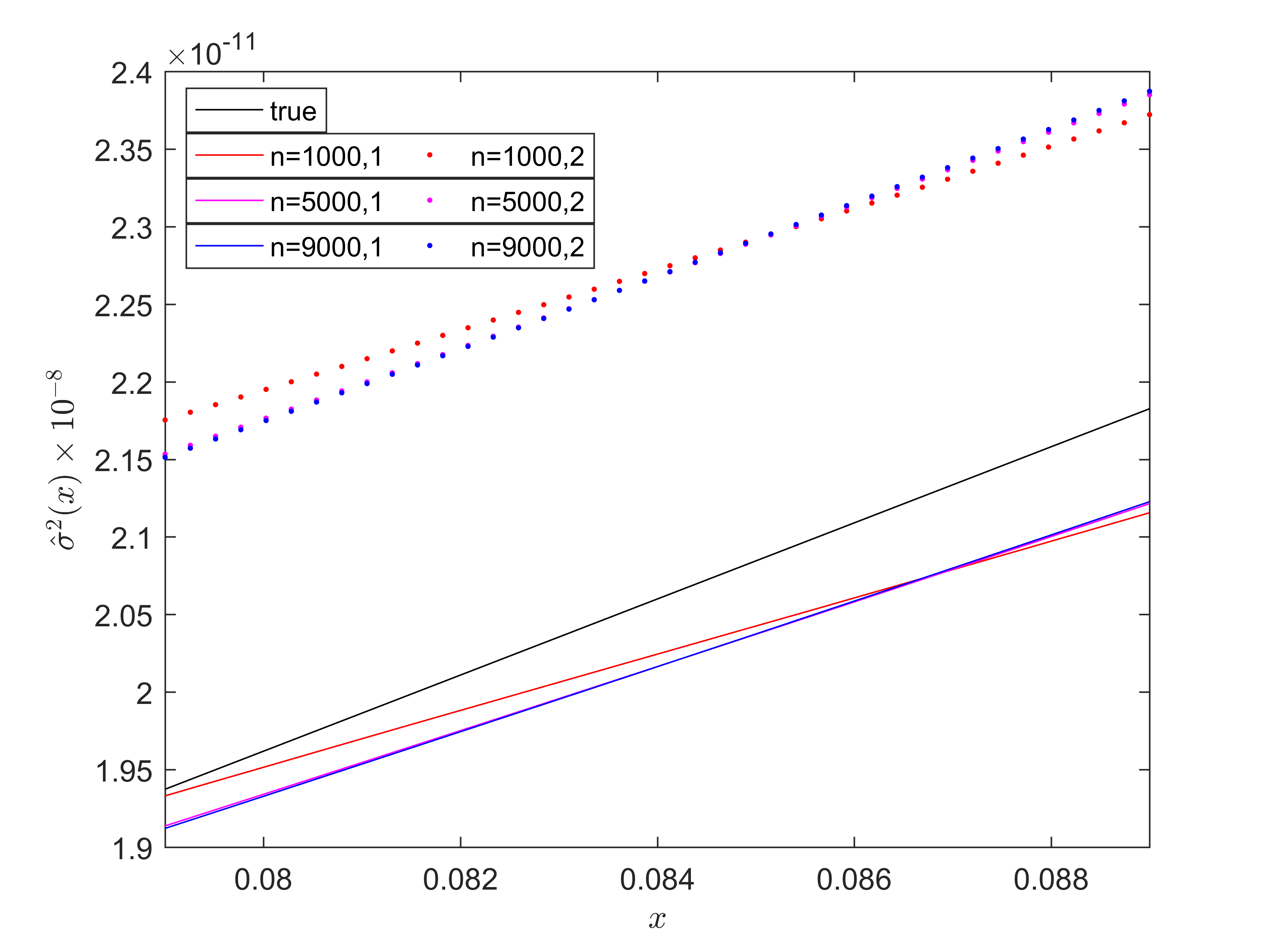}
    \caption*{$\Delta=0.004$, $h=0.03$}
  \end{minipage}
  }
  \subfigure{
    \begin{minipage}{0.32\linewidth}
    \includegraphics[width=2.3in,height=1.9in]{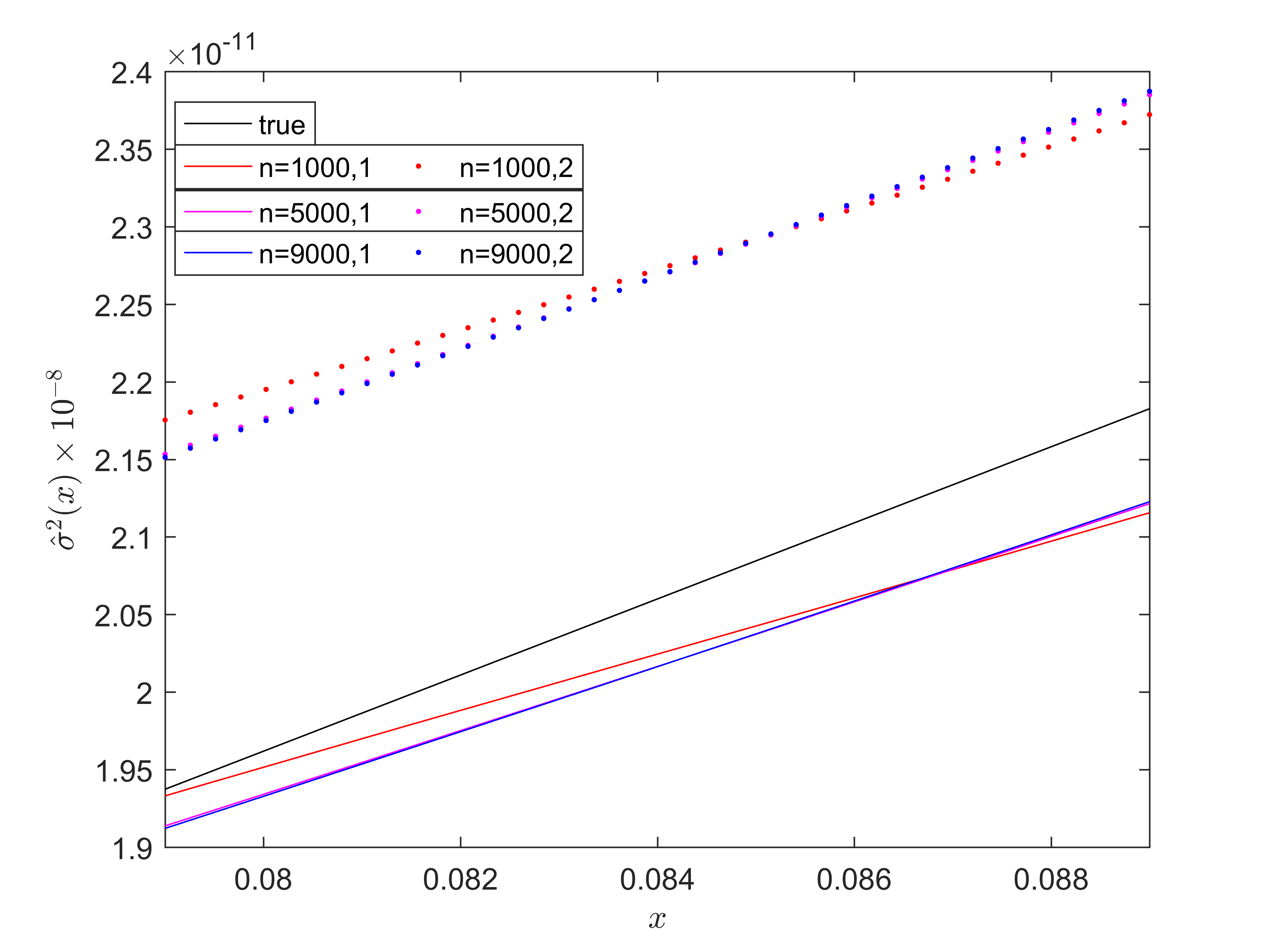}
    \caption*{$\Delta=0.004$, $h=0.12$}
  \end{minipage}
  }
  \subfigure{
    \begin{minipage}{0.32\linewidth}
    \includegraphics[width=2.3in,height=1.9in]{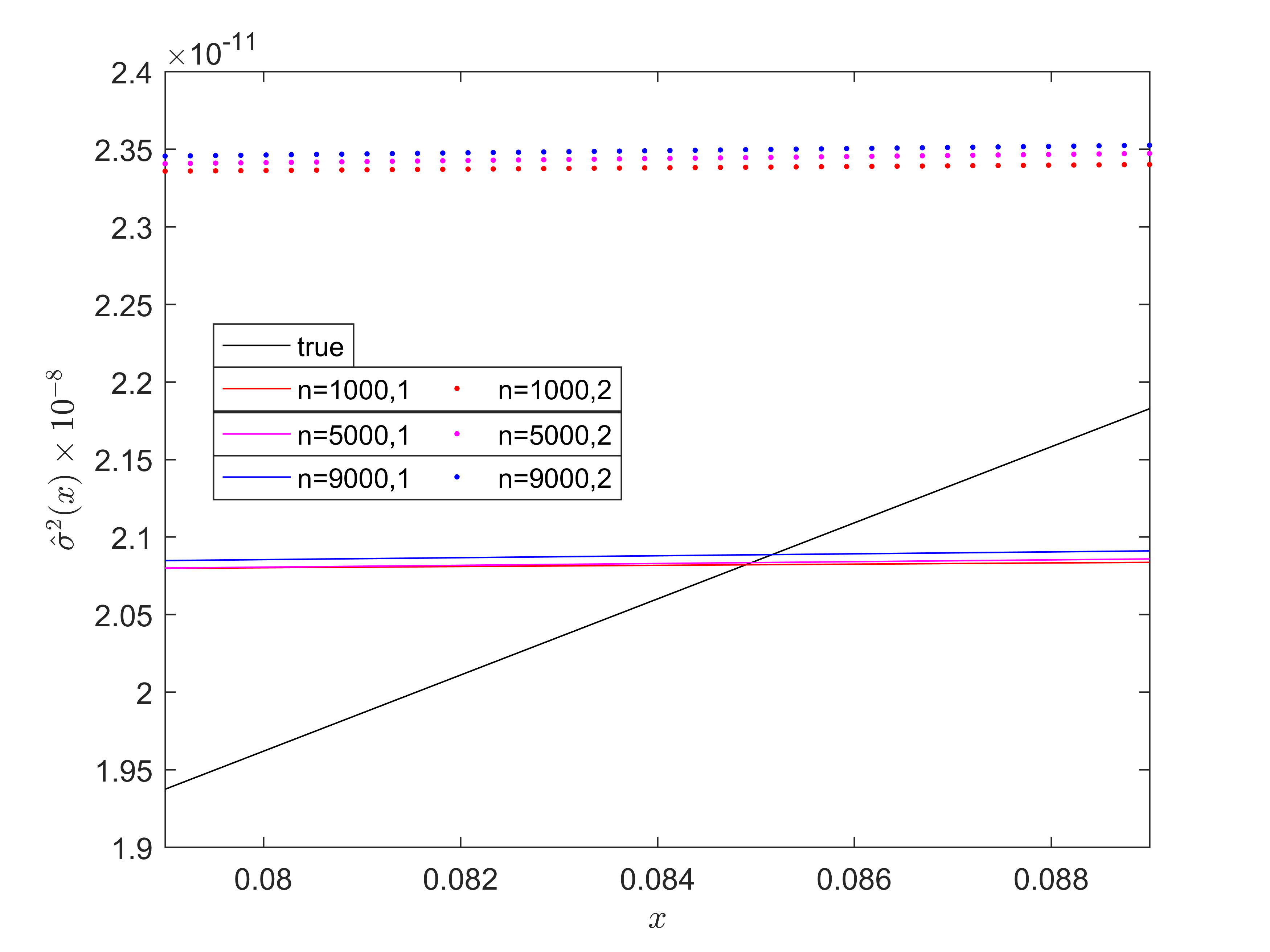}
    \caption*{$\Delta=0.004$, $h=0.3$}
  \end{minipage}
  }

  \subfigure{
    \begin{minipage}{0.32\linewidth}
  \centering
    \includegraphics[width=2.3in,height=1.9in]{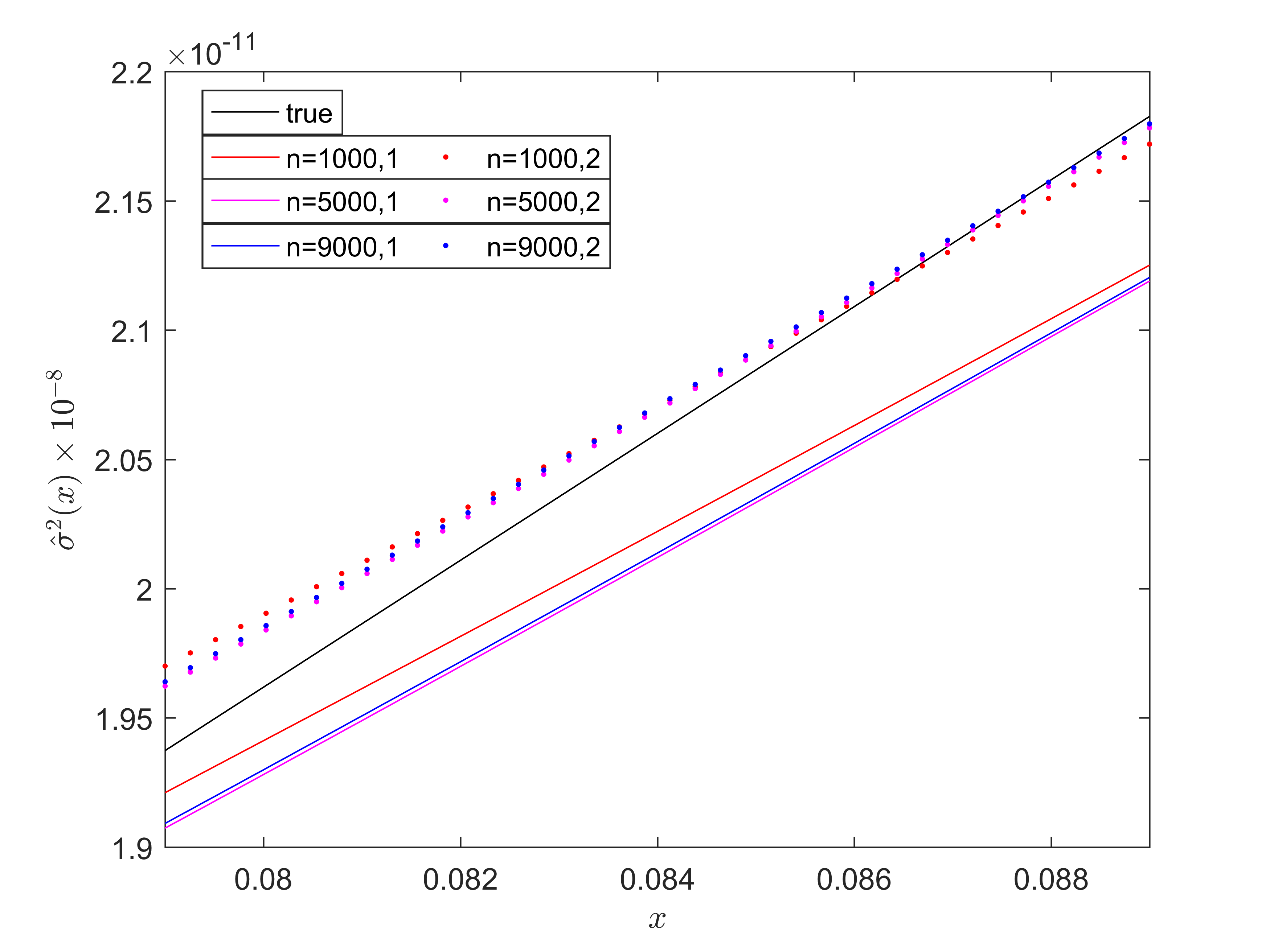}
    \caption*{$\Delta=0.008$, $h=0.03$}
  \end{minipage}
  }
  \subfigure{
    \begin{minipage}{0.32\linewidth}
    \includegraphics[width=2.3in,height=1.9in]{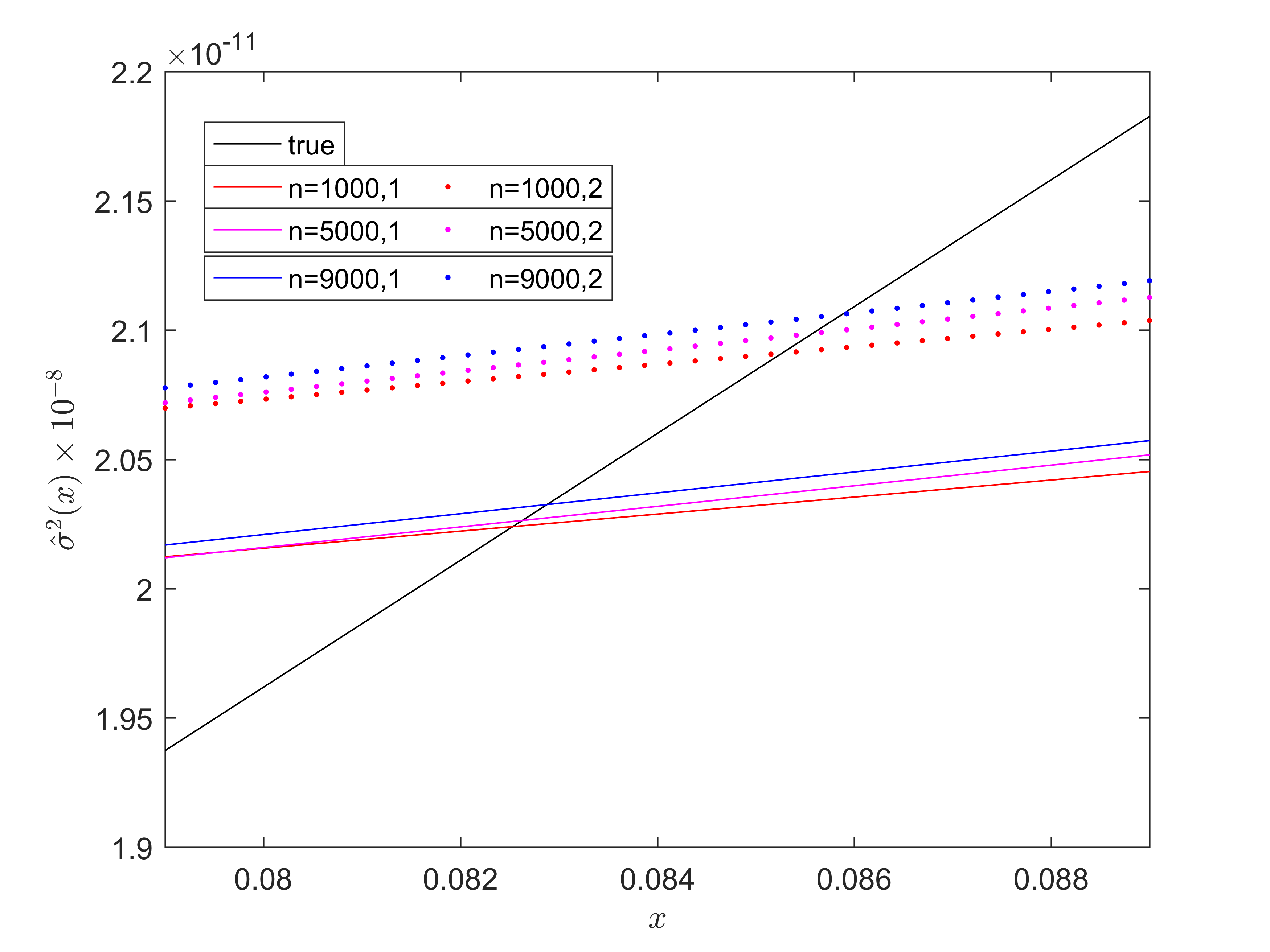}
    \caption*{$\Delta=0.008$, $h=0.12$}
  \end{minipage}
  }
  \subfigure{
    \begin{minipage}{0.32\linewidth}
    \includegraphics[width=2.3in,height=1.9in]{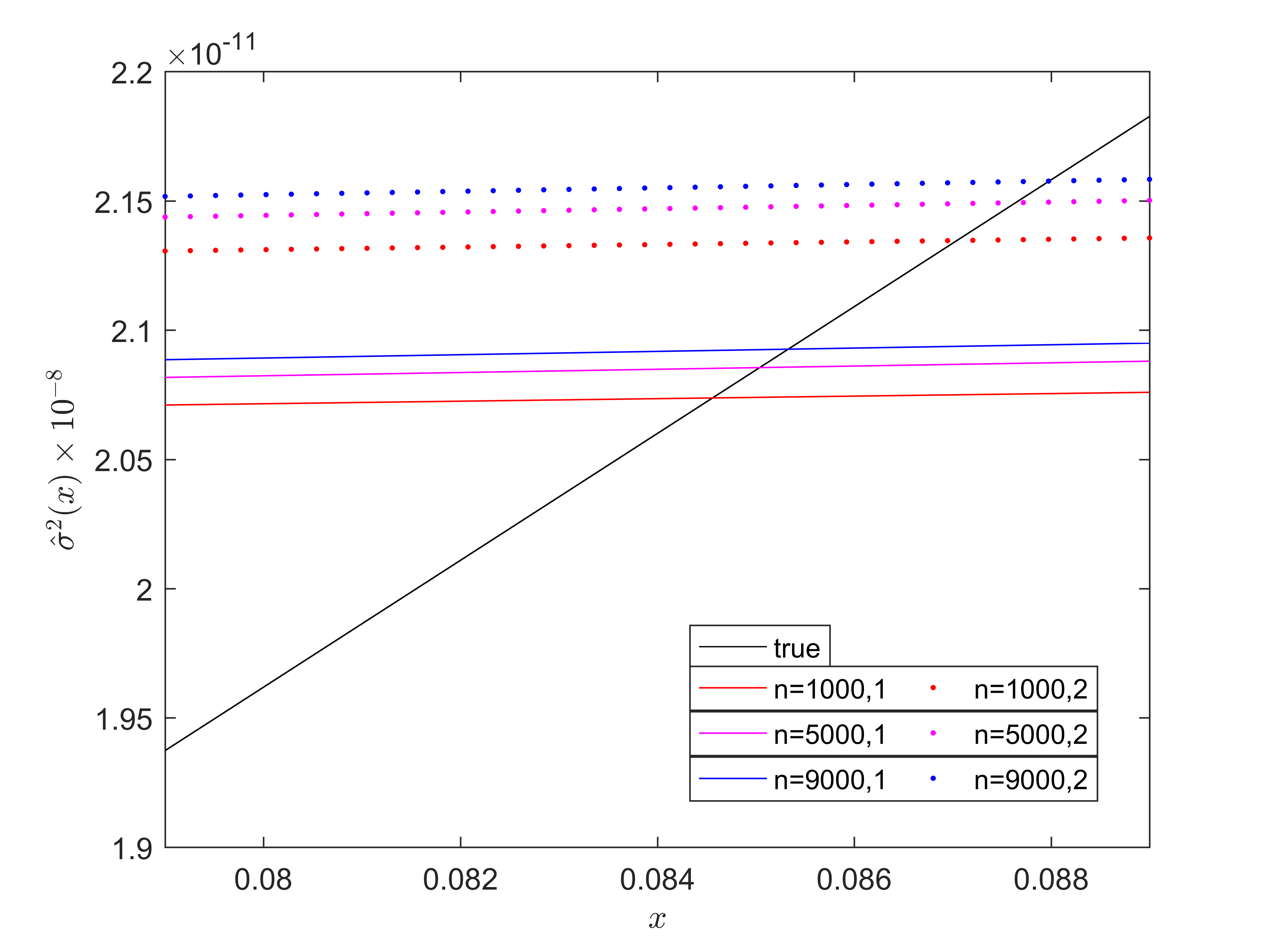}
    \caption*{$\Delta=0.008$, $h=0.3$}
  \end{minipage}
  }
  \caption{Mean Estimated Diffusion Coefficient Under Different $\Delta$ and $h$ for the CIR process}
  \label{resultestimatorCIR}
\end{figure}
\begin{figure}[htbp]
  \begin{minipage}{0.32\linewidth}
  \centering
    \includegraphics[width=2.3in,height=1.9in]{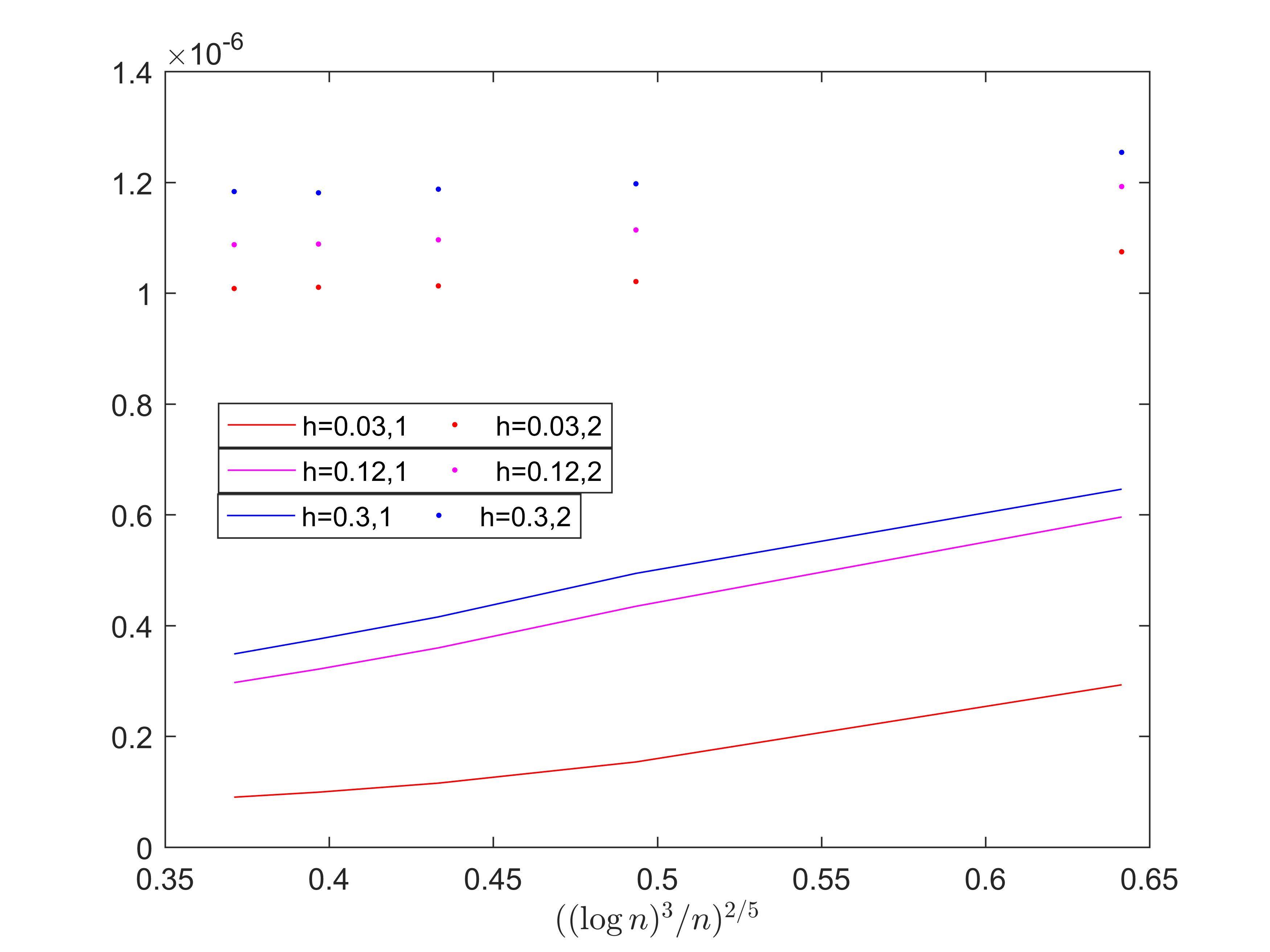}
    \caption*{$\Delta=0.002$}
  \end{minipage}
    \begin{minipage}{0.32\linewidth}
    \includegraphics[width=2.3in,height=1.9in]{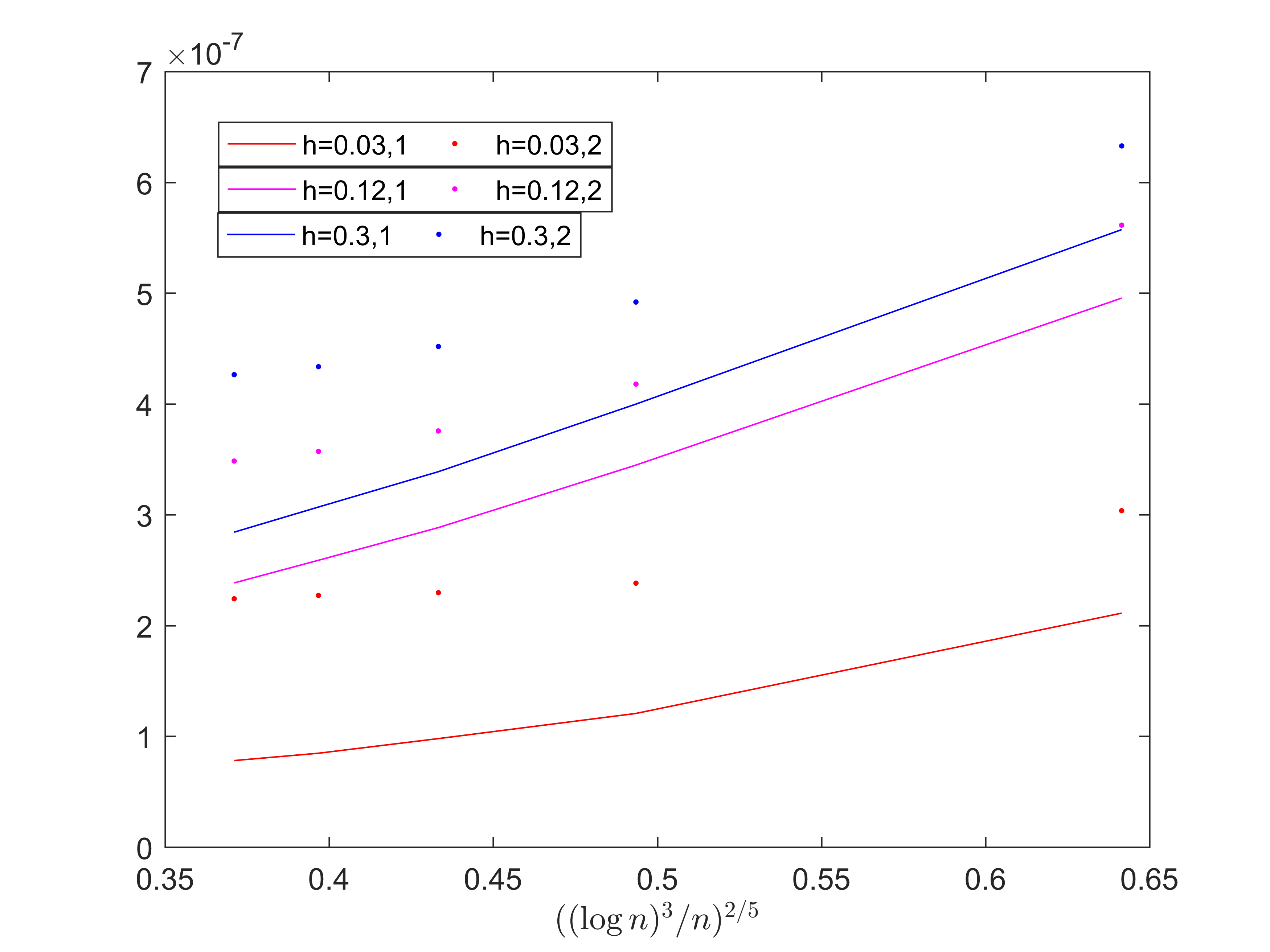}
    \caption*{$\Delta=0.004$}
  \end{minipage}
    \begin{minipage}{0.32\linewidth}
    \includegraphics[width=2.3in,height=1.9in]{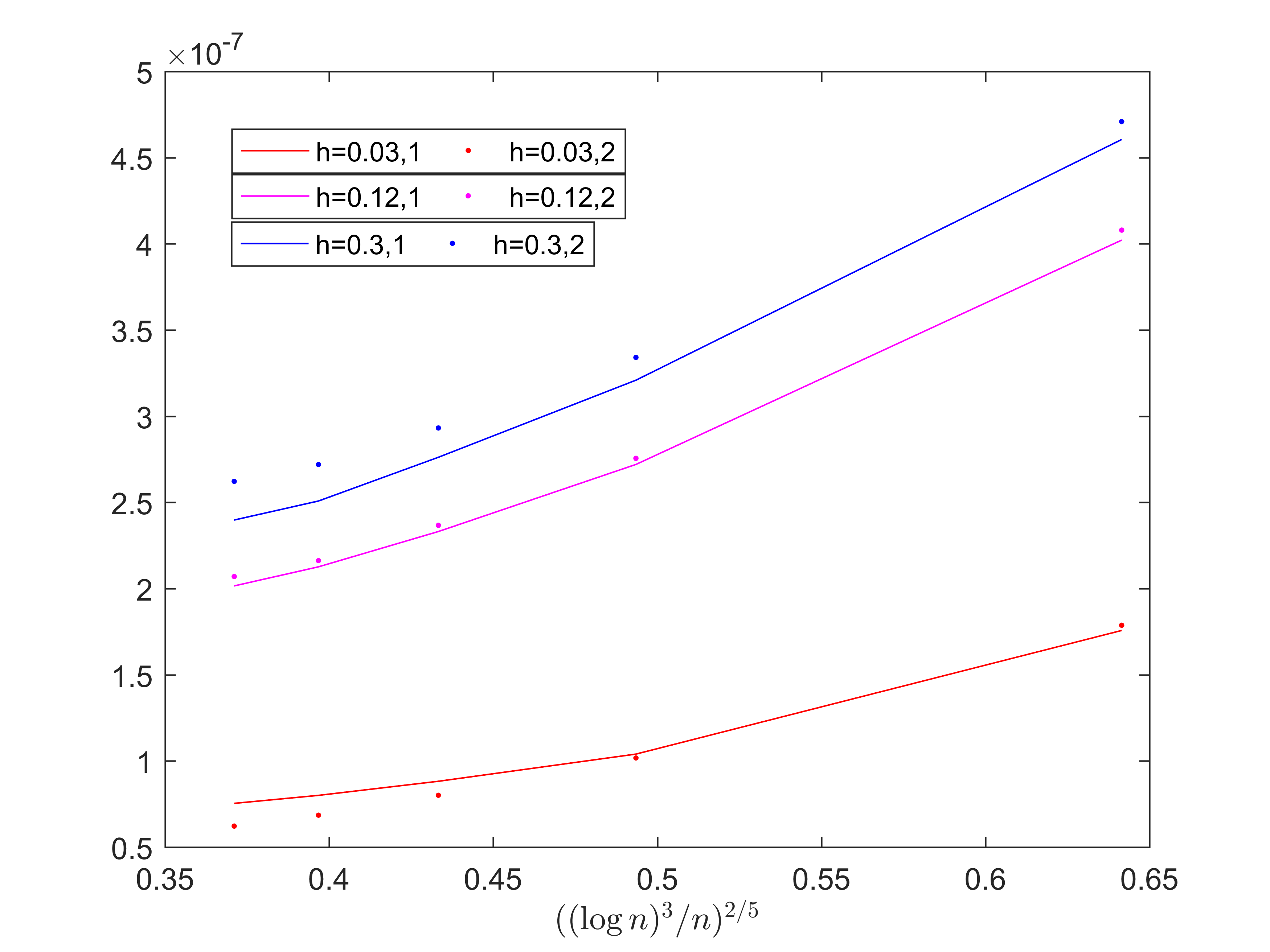}
    \caption*{$\Delta=0.008$}
  \end{minipage}
  \caption{Theoretical vs. Simulated Error: $((\log n)^{3}/n)^{2/5}$ vs. MAAE ($\times 10^{-3}$) for the CIR process}
  \label{comprisontheosimuCIR}
\end{figure}

\begin{table}[H]
\renewcommand{\arraystretch}{1.2}
  \centering
\begin{tabular}{cccccccc}
\hline
\multicolumn{1}{c}{$\Delta$}            & h                     & Error           & n=1000 & n=3000 & n=5000 & n=7000 & n=9000 \\ \hline
\multirow{6}{*}{$0.002$}                             &  \multirow{2}{*}{0.1218} & $\text{MAAE}^{1}\times 10^{2}$ & 7.0589  & 2.1952  & 1.1668  & 0.936  & 0.7929 \\
                                                      &                           & $\text{MAAE}^{2}\times 10^{2}$ & 13.7814 & 11.0112 & 10.0617 & 9.8511 & 9.7601 \\ \cline{2-8}
                                                      &   \multirow{2}{*}{0.3046}& $\text{MAAE}^{1}\times 10^{2}$ & 2.9353  & 0.8459  & 0.5703  & 0.484  & 0.4167 \\
                                                      &                           & $\text{MAAE}^{2}\times 10^{2}$ & 11.2319 & 9.6431  & 9.4678  & 9.4036 & 9.3779 \\ \cline{2-8}
                                                      &  \multirow{2}{*}{0.6091} & $\text{MAAE}^{1}\times 10^{2}$ & 1.0493  & 0.4915  & 0.3709  & 0.3233 & 0.2881 \\
                                                      &                           & $\text{MAAE}^{2}\times 10^{2}$ & 9.5579  & 9.3369  & 9.3164  & 9.2878 & 9.2895 \\  \hline
\multirow{6}{*}{$0.004$}                             & \multirow{2}{*}{0.1218}  & $\text{MAAE}^{1}\times 10^{2}$ & 3.9868  & 1.4056  & 1.0644  & 0.8837 & 0.7616 \\
                                                      &                           & $\text{MAAE}^{2}\times 10^{2}$ & 5.5051  & 3.0479  & 2.7937  & 2.689  & 2.6351 \\ \cline{2-8}
                                                      &  \multirow{2}{*}{0.3046} & $\text{MAAE}^{1}\times 10^{2}$ & 1.626   & 0.722   & 0.5533  & 0.4603 & 0.4045 \\
                                                      &                            & $\text{MAAE}^{2}\times 10^{2}$ & 3.0606  & 2.4568  & 2.4034  & 2.3774 & 2.3723 \\ \cline{2-8}
                                                      &  \multirow{2}{*}{0.6091}  & $\text{MAAE}^{1}\times 10^{2}$ & 0.86    & 0.4919  & 0.3769  & 0.3191 & 0.2796 \\
                                                      &                            & $\text{MAAE}^{2}\times 10^{2}$ & 2.3825  & 2.3091  & 2.3069  & 2.305  & 2.3057 \\   \hline
\multirow{6}{*}{$0.008$}                             &  \multirow{2}{*}{0.1218}  & $\text{MAAE}^{1}\times 10^{2}$ & 2.8146  & 1.3818  & 1.0232  & 0.8583 & 0.7508 \\
                                                      &                            & $\text{MAAE}^{2}\times 10^{2}$ & 3.232   & 1.5209  & 1.1899  & 1.0336 & 0.9353 \\ \cline{2-8}
                                                      &  \multirow{2}{*}{0.3046}  & $\text{MAAE}^{1}\times 10^{2}$ & 1.3349  & 0.7257  & 0.5499  & 0.4567 & 0.4008 \\
                                                      &                           & $\text{MAAE}^{2}\times 10^{2}$ & 1.5015  & 0.8824  & 0.7383  & 0.6662 & 0.6194 \\ \cline{2-8}
                                                      &  \multirow{2}{*}{0.6091} & $\text{MAAE}^{1}\times 10^{2}$ & 0.8583  & 0.4878  & 0.3826  & 0.3191 & 0.2834 \\
                                                      &                          & $\text{MAAE}^{2}\times 10^{2}$ & 0.9744  & 0.6651  & 0.594   & 0.5575 & 0.5426 \\  \hline
\multirow{6}{*}{$0.01$}                              &  \multirow{2}{*}{0.1218} & $\text{MAAE}^{1}\times 10^{2}$ & 2.5946  & 1.322   & 1.0081  & 0.8365 & 0.0578 \\
                                                      &                         & $\text{MAAE}^{2}\times 10^{2}$ & 2.7102  & 1.4415  & 1.0931  & 0.9285 & 0.8168 \\ \cline{2-8}
                                                      &  \multirow{2}{*}{0.3046}  & $\text{MAAE}^{1}\times 10^{2}$ & 1.2865  & 0.7179  & 0.5572  & 0.454  & 0.0434 \\
                                                      &                         & $\text{MAAE}^{2}\times 10^{2}$ & 1.3902  & 0.7829  & 0.6204  & 0.527  & 0.4759 \\ \cline{2-8}
                                                      &  \multirow{2}{*}{0.6091}  & $\text{MAAE}^{1}\times 10^{2}$ & 0.8304  & 0.4793  & 0.3856  & 0.3184 & 0.2747 \\
                                                      &                         & $\text{MAAE}^{2}\times 10^{2}$ & 0.9082  & 0.5486  & 0.4566  & 0.4054 & 0.3742 \\  \hline
\end{tabular}
  \caption{MAAE of the diffusion function estimators for OU process}
  \label{resulttableOU}
\end{table}
\begin{figure}[htbp]
  \subfigure{
  \begin{minipage}{0.32\linewidth}
  \centering
    \includegraphics[width=2.3in,height=1.9in]{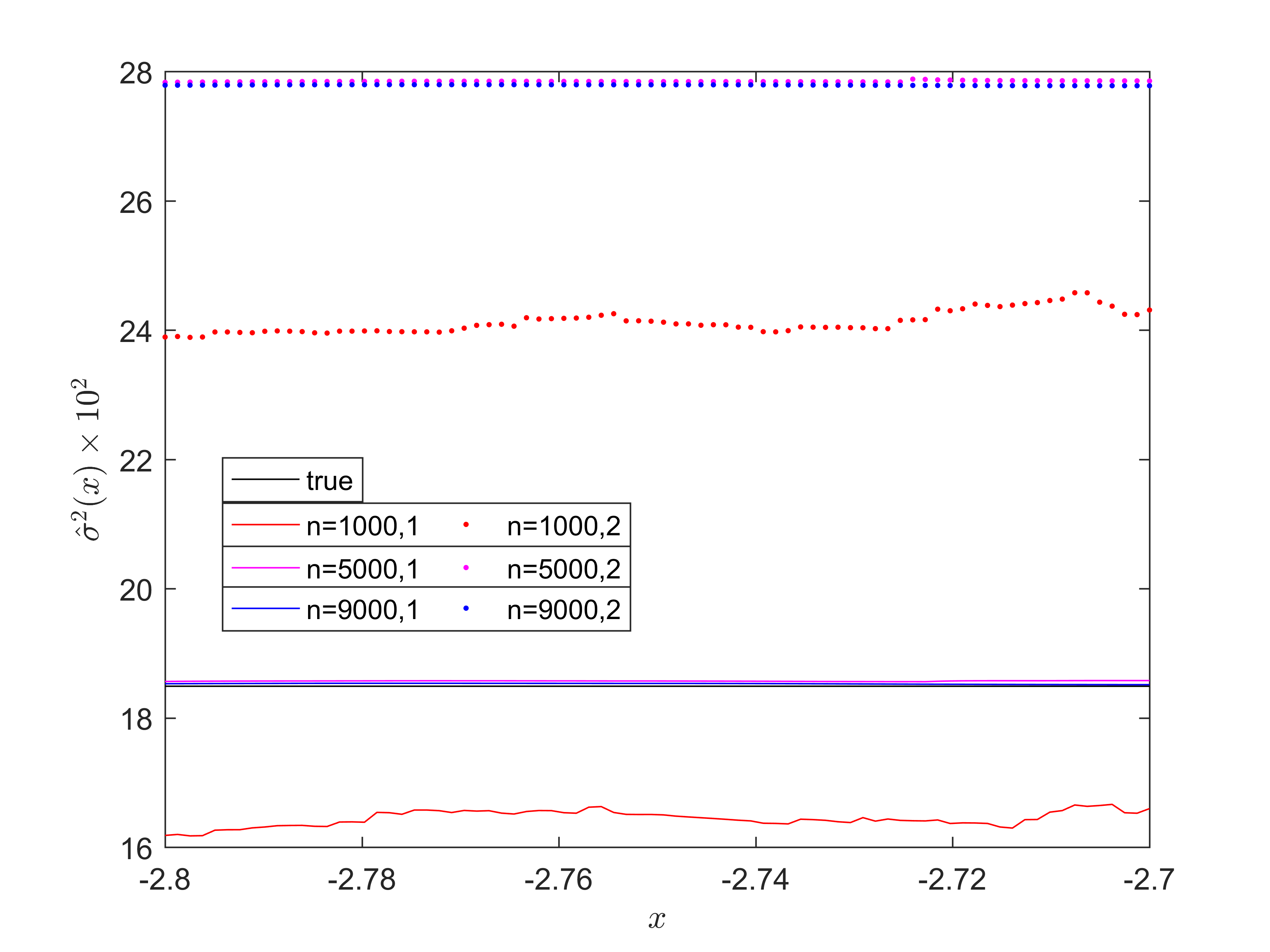}
    \caption*{$\Delta=0.002$, $h=0.1218$}
  \end{minipage}
  }
  \subfigure{
    \begin{minipage}{0.32\linewidth}
    \includegraphics[width=2.3in,height=1.9in]{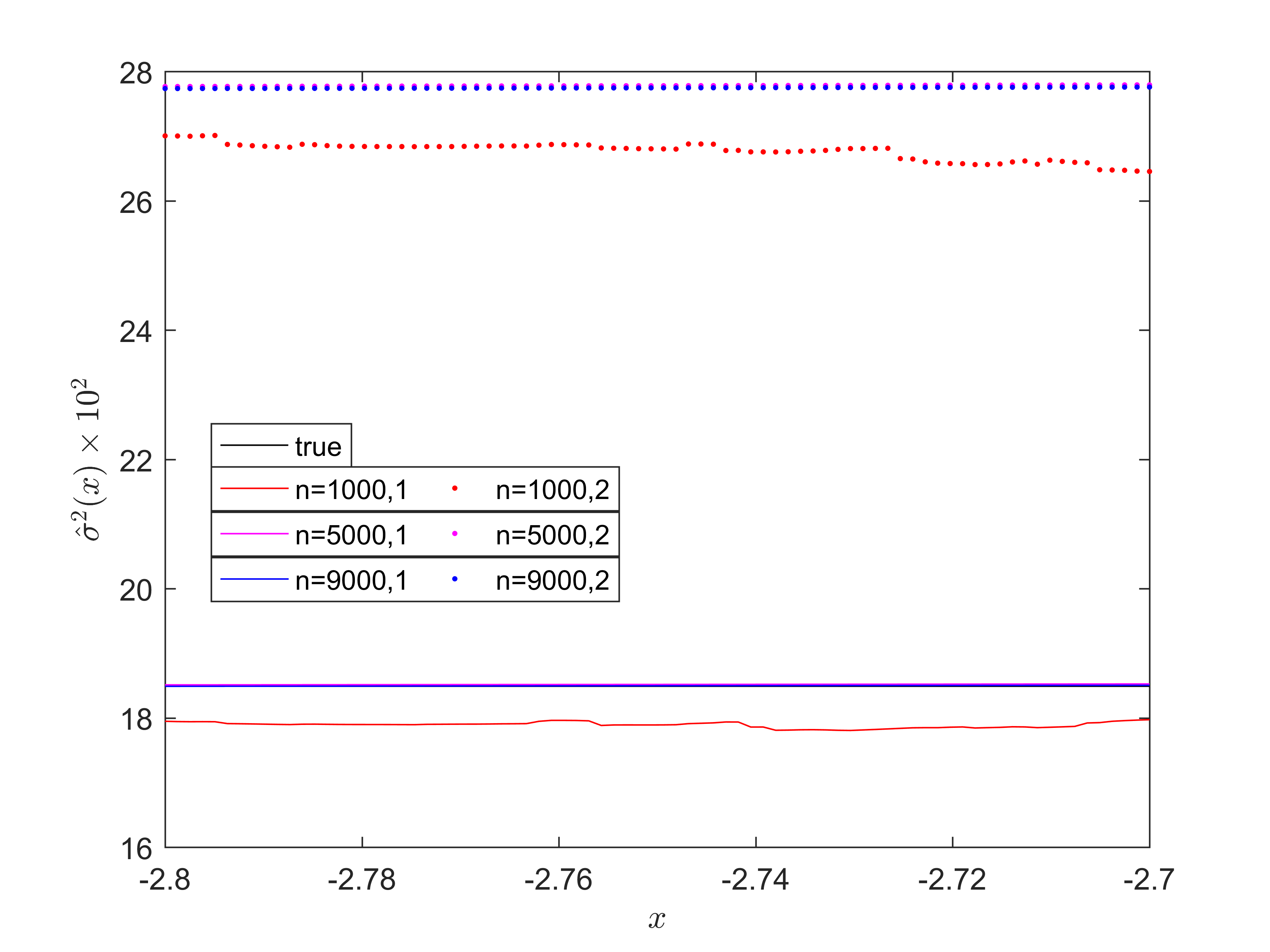}
    \caption*{$\Delta=0.002$, $h=0.3046$}
  \end{minipage}
  }
  \subfigure{
    \begin{minipage}{0.32\linewidth}
    \includegraphics[width=2.3in,height=1.9in]{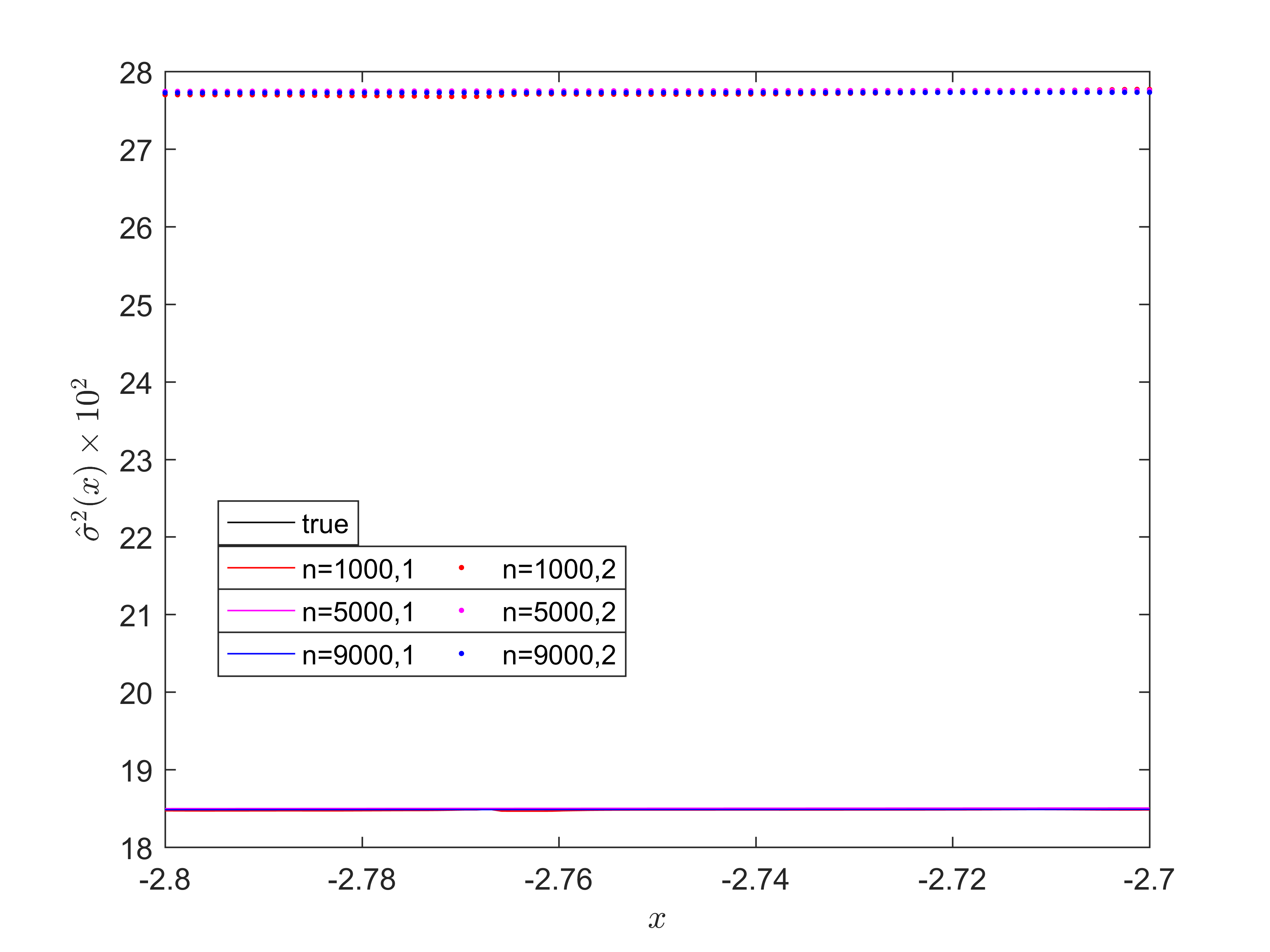}
    \caption*{$\Delta=0.002$, $h=0.6091$}
  \end{minipage}
  }

  \subfigure{
  \begin{minipage}{0.32\linewidth}
  \centering
    \includegraphics[width=2.3in,height=1.9in]{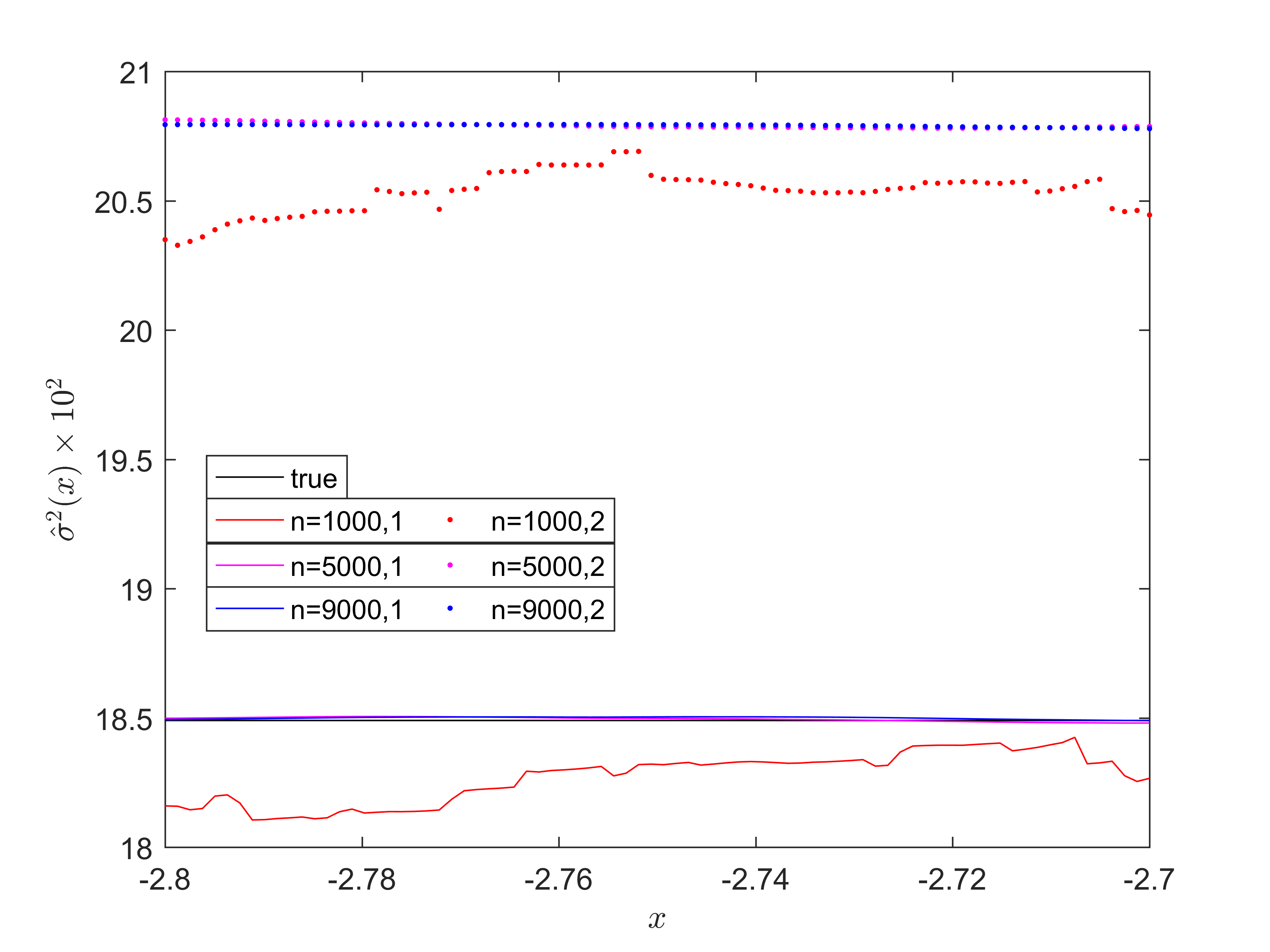}
    \caption*{$\Delta=0.004$, $h=0.1218$}
  \end{minipage}
  }
  \subfigure{
    \begin{minipage}{0.32\linewidth}
    \includegraphics[width=2.3in,height=1.9in]{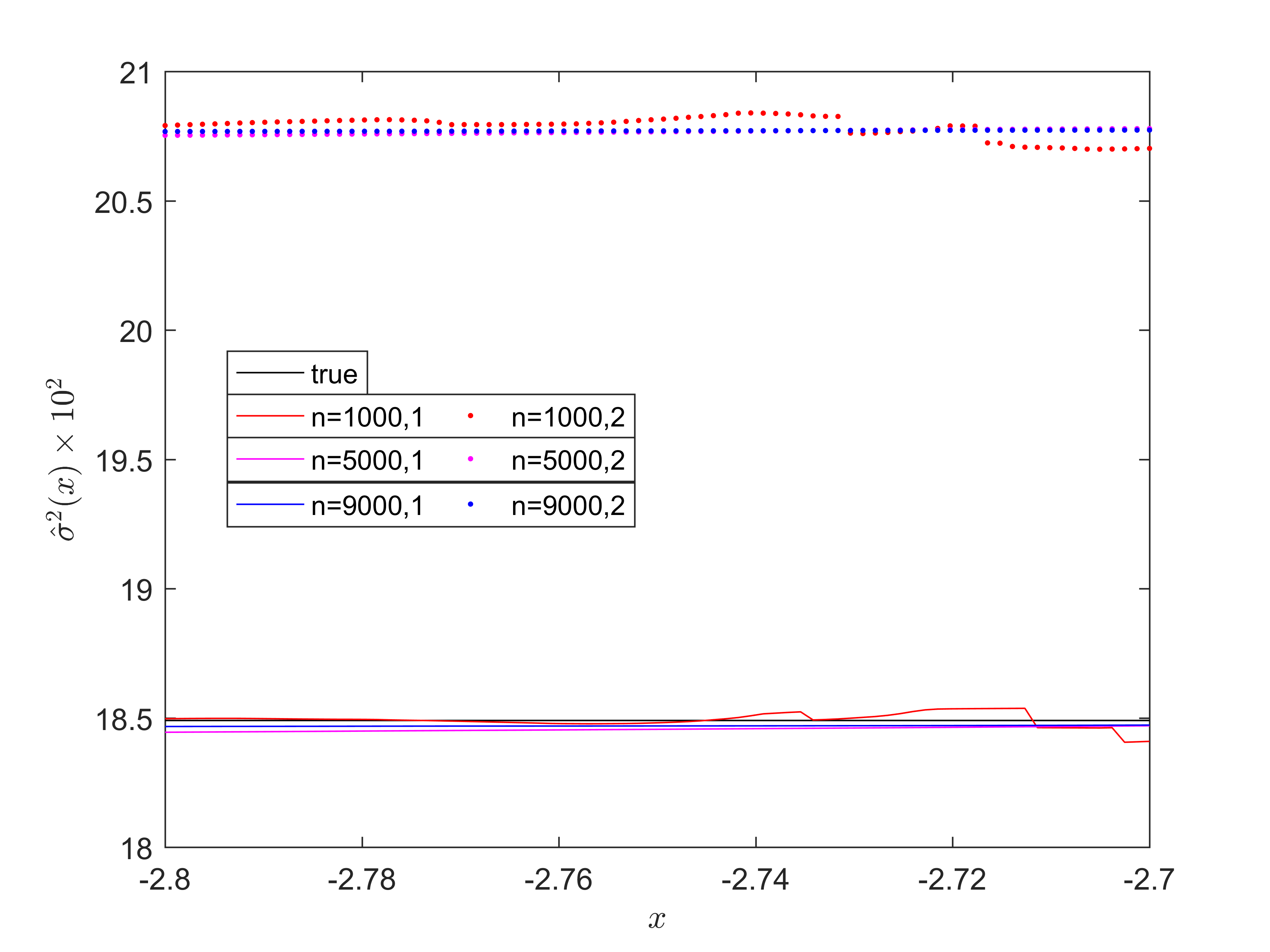}
    \caption*{$\Delta=0.004$, $h=0.3046$}
  \end{minipage}
  }
  \subfigure{
    \begin{minipage}{0.32\linewidth}
    \includegraphics[width=2.3in,height=1.9in]{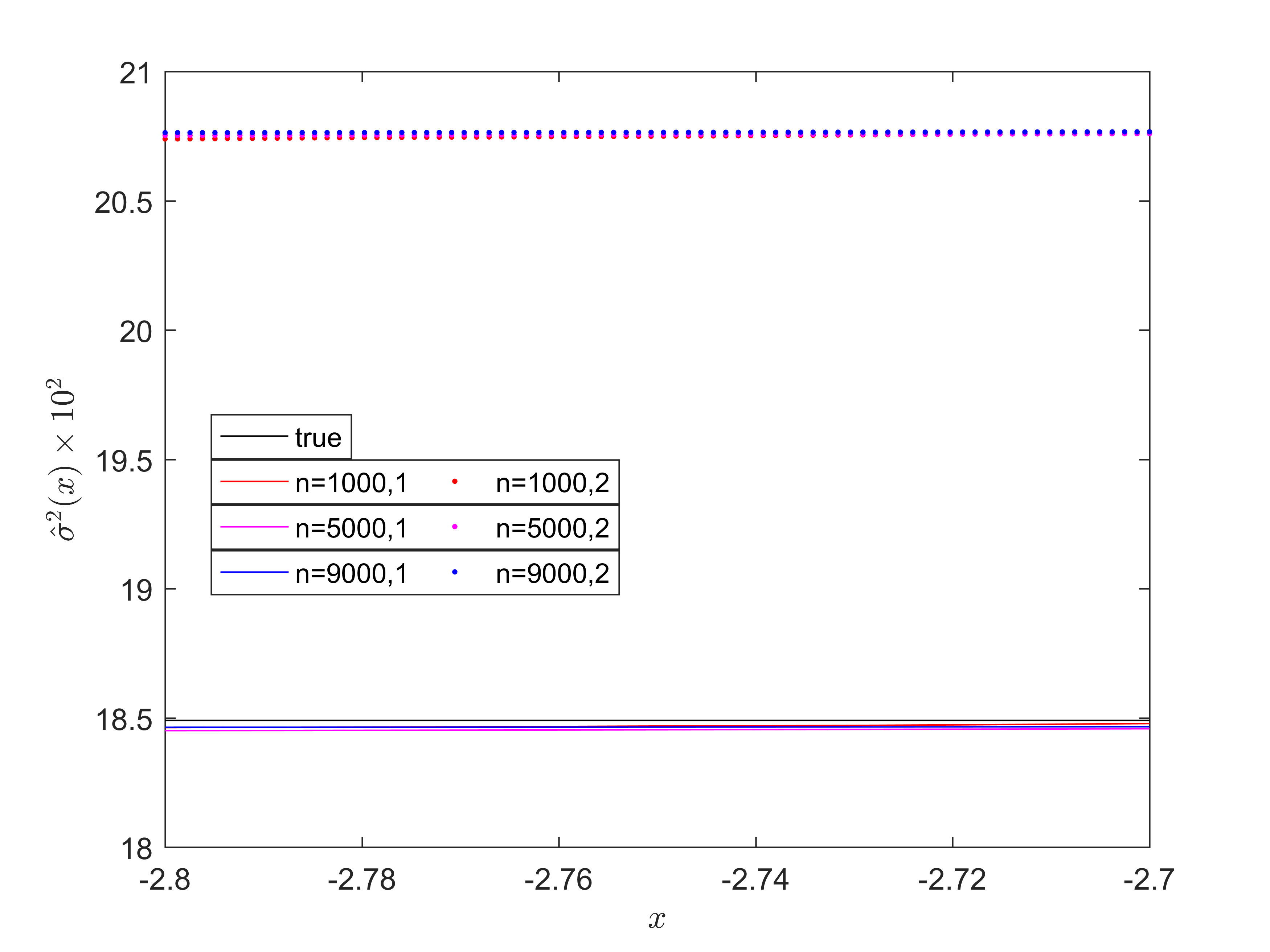}
    \caption*{$\Delta=0.004$, $h=0.6091$}
  \end{minipage}
  }

  \subfigure{
  \begin{minipage}{0.32\linewidth}
  \centering
    \includegraphics[width=2.3in,height=1.9in]{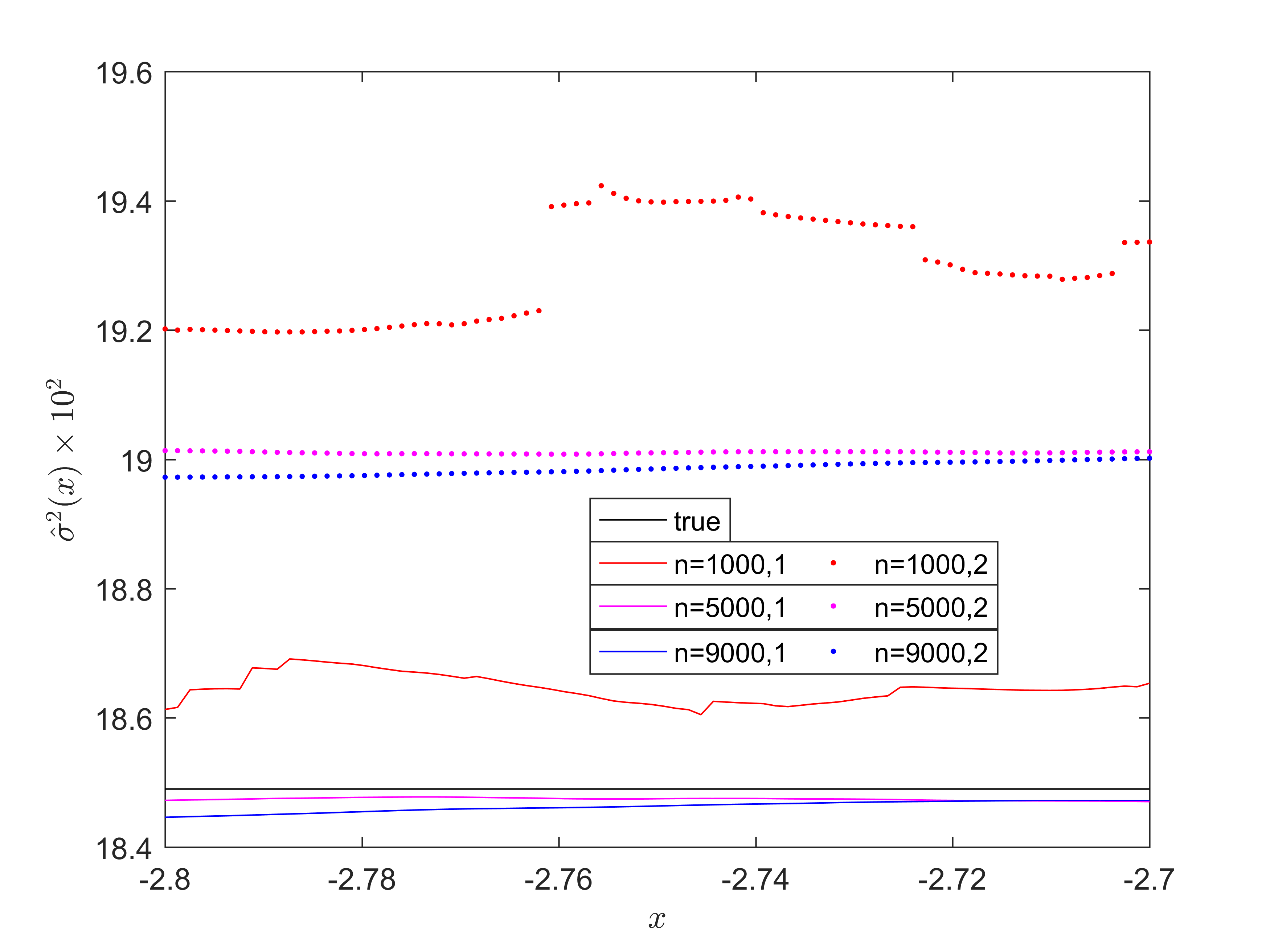}
    \caption*{$\Delta=0.008$, $h=0.1218$}
  \end{minipage}
  }
  \subfigure{
    \begin{minipage}{0.32\linewidth}
    \includegraphics[width=2.3in,height=1.9in]{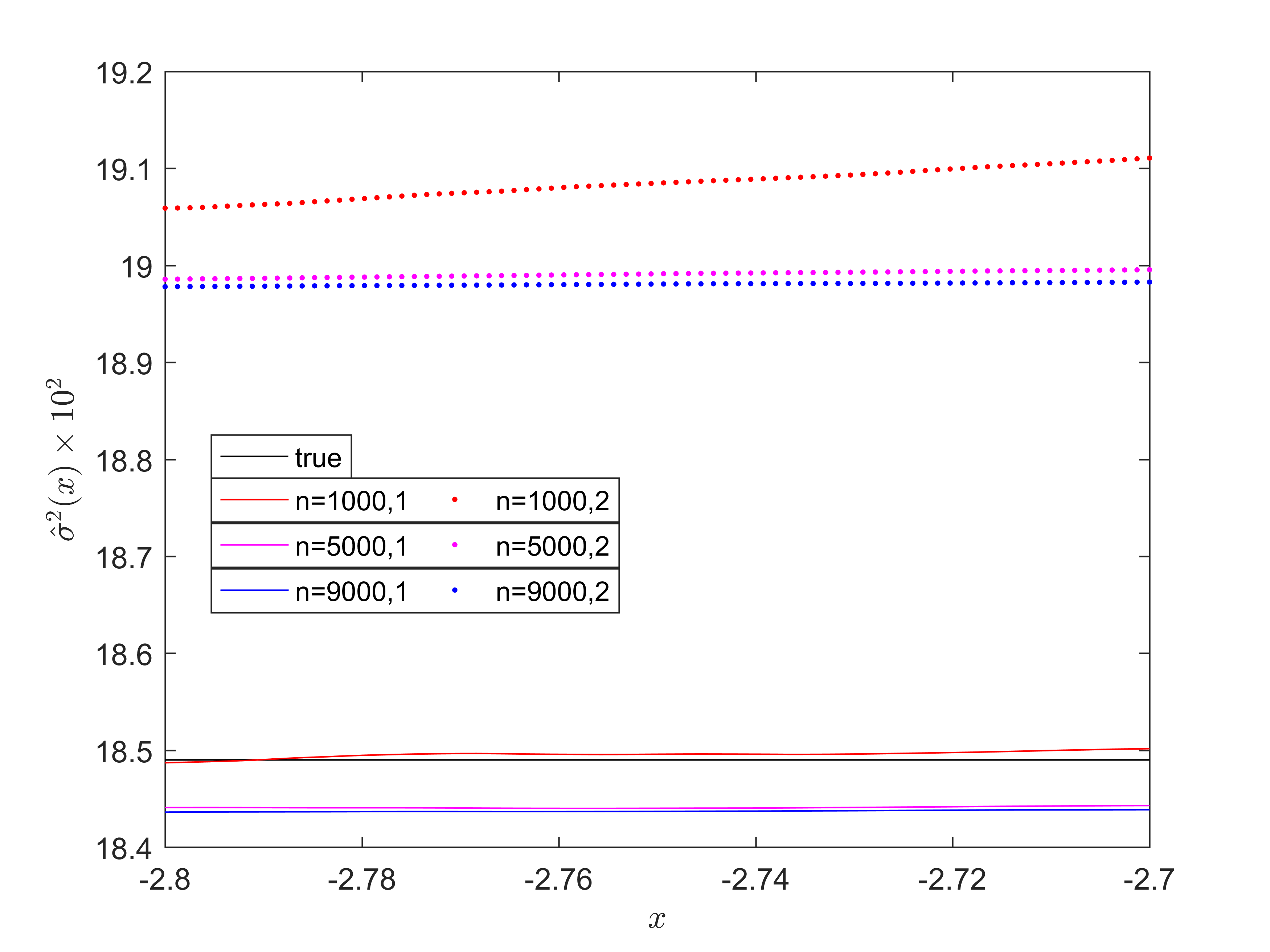}
    \caption*{$\Delta=0.008$, $h=0.3046$}
  \end{minipage}
  }
  \subfigure{
    \begin{minipage}{0.32\linewidth}
    \includegraphics[width=2.3in,height=1.9in]{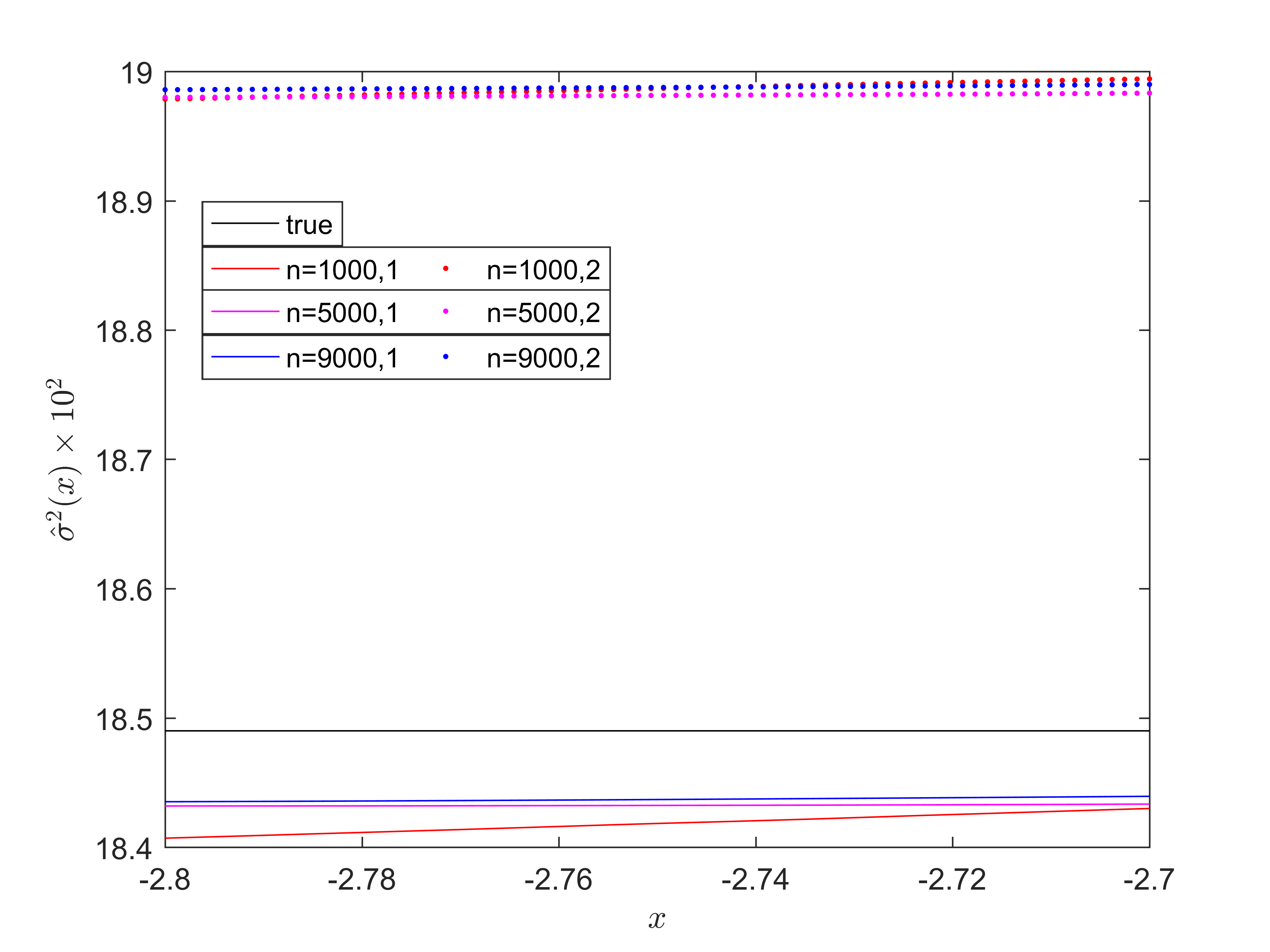}
    \caption*{$\Delta=0.008$, $h=0.6091$}
  \end{minipage}
  }
  \caption{Mean Estimated Diffusion Coefficient Under Different $\Delta$ and $h$ for the OU process}
  \label{resultestimatorOU}
\end{figure}
\begin{figure}[htbp]
  \begin{minipage}{0.32\linewidth}
  \centering
    \includegraphics[width=2.3in,height=1.9in]{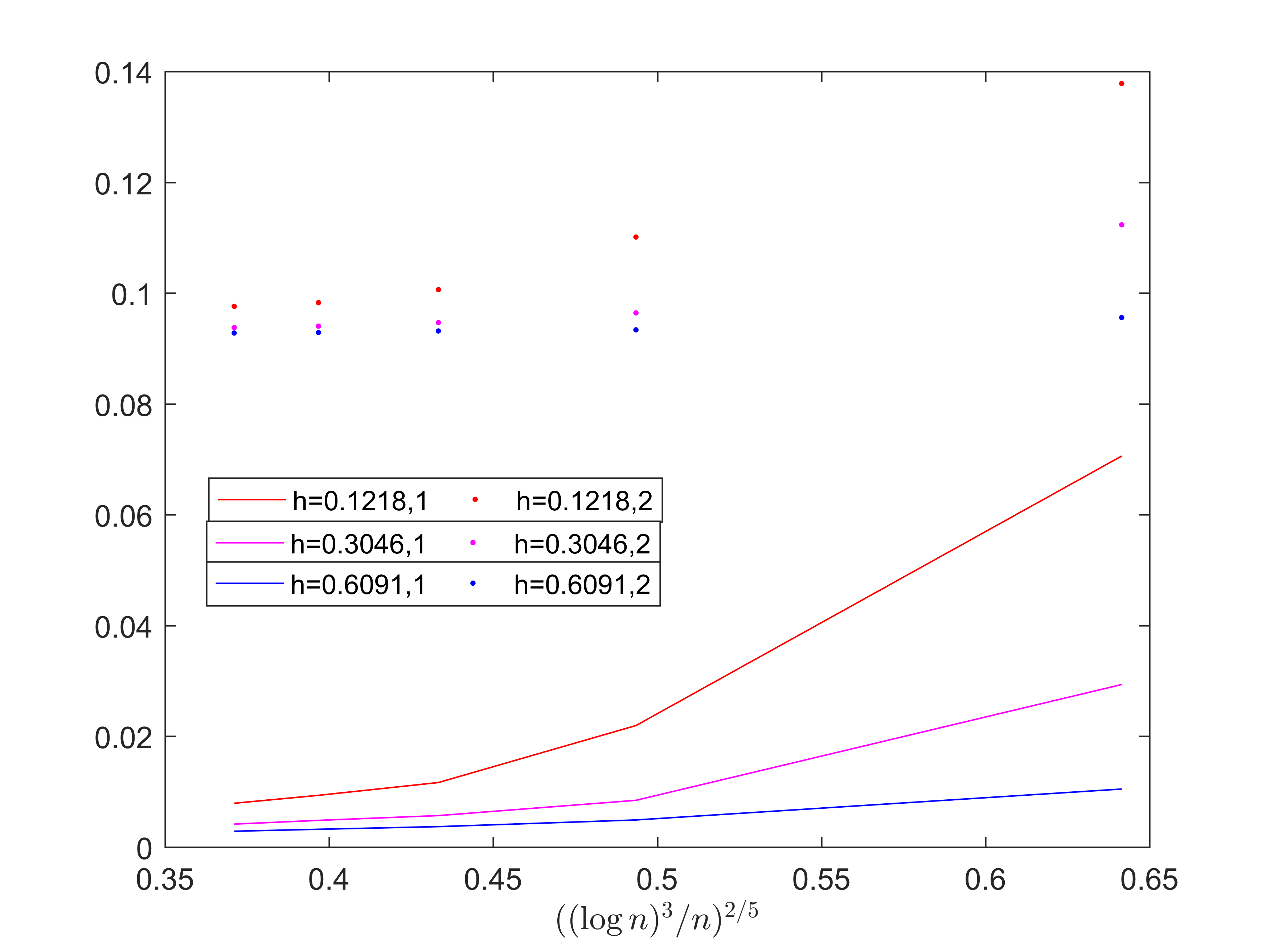}
    \caption*{$\Delta=0.002$}
  \end{minipage}
    \begin{minipage}{0.32\linewidth}
    \includegraphics[width=2.3in,height=1.9in]{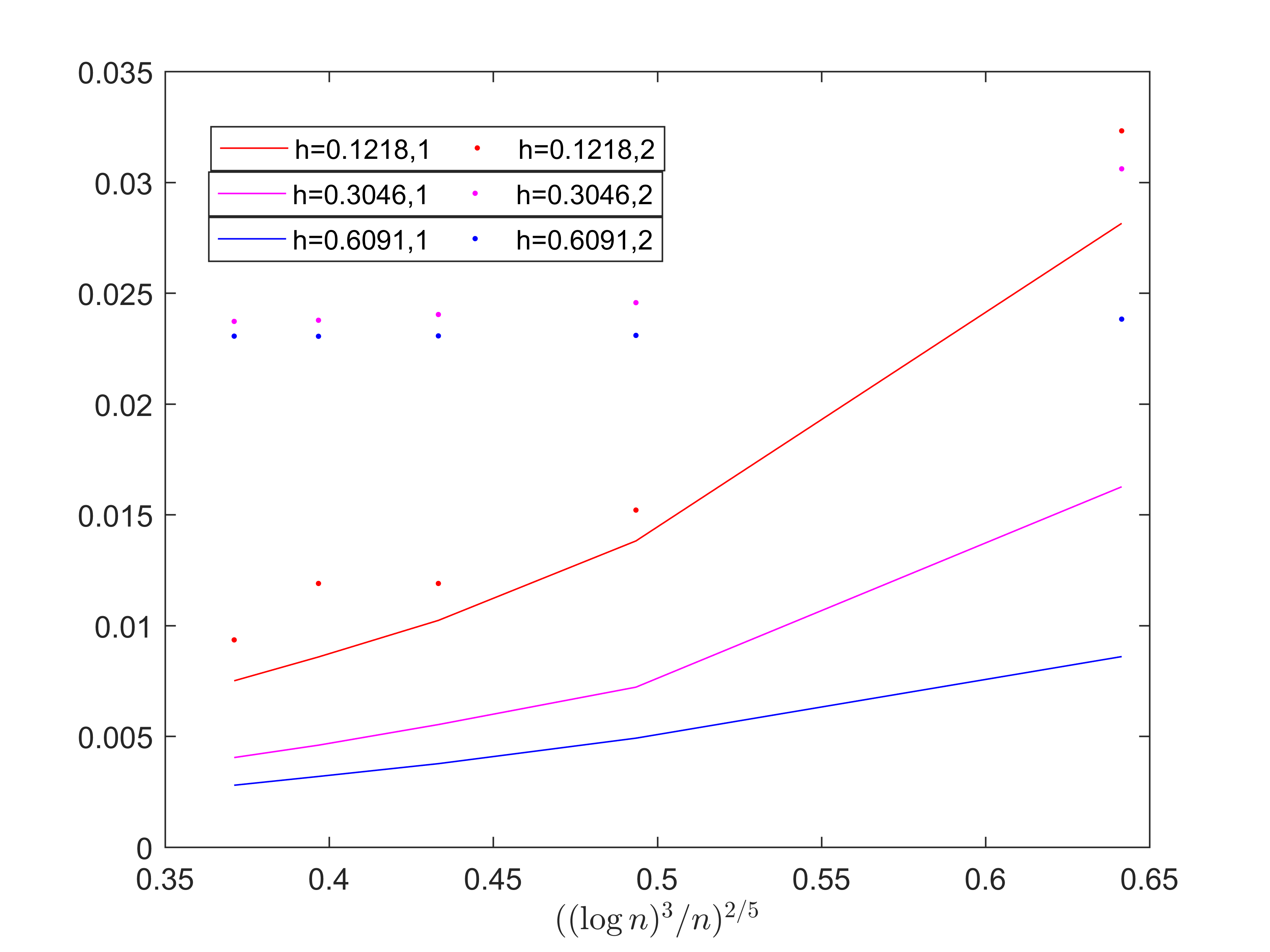}
    \caption*{$\Delta=0.004$}
  \end{minipage}
    \begin{minipage}{0.32\linewidth}
    \includegraphics[width=2.3in,height=1.9in]{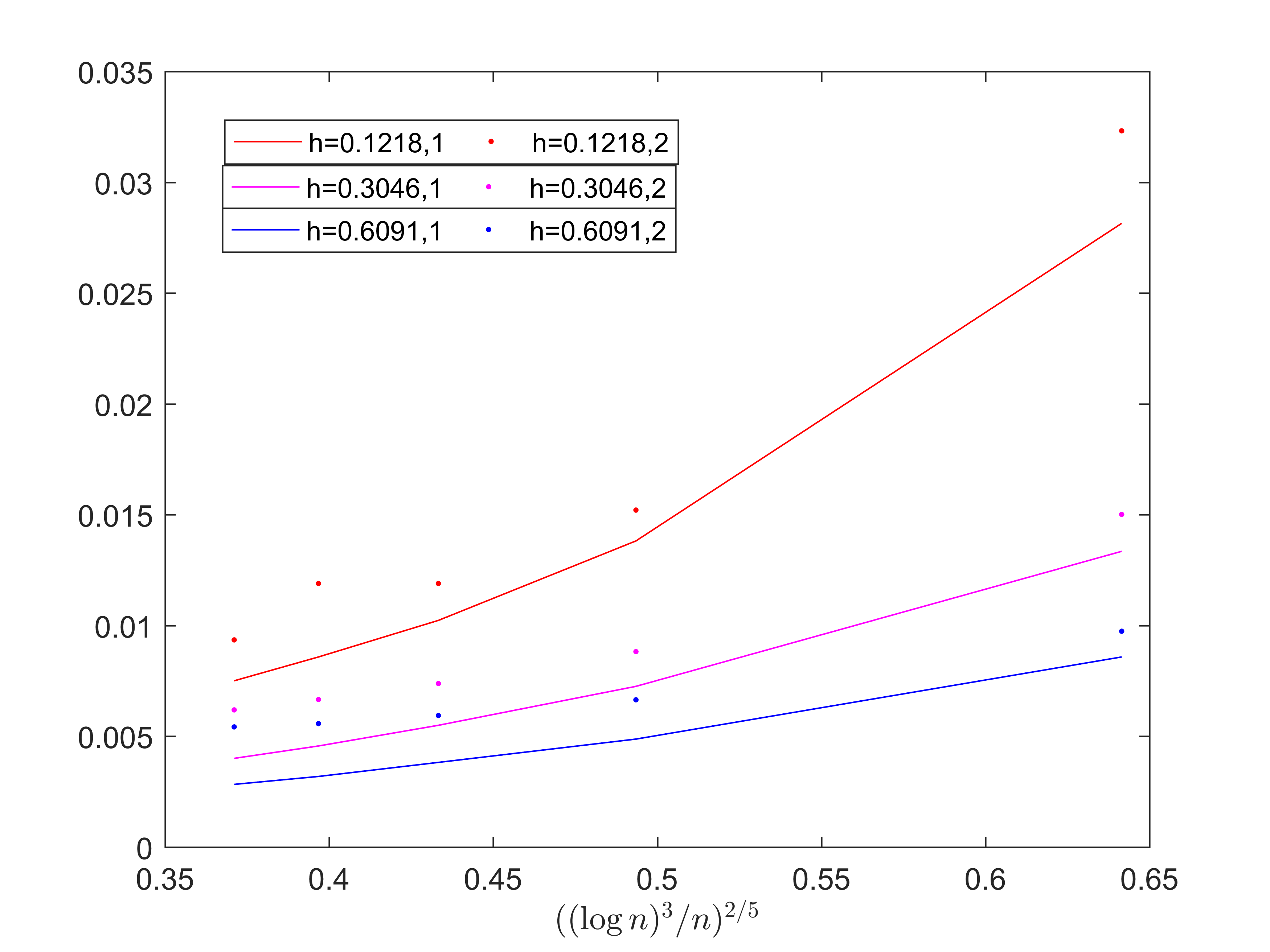}
    \caption*{$\Delta=0.008$}
  \end{minipage}
    \caption{Theoretical vs. Simulated Error: $((\log n)^{3}/n)^{2/5}$ vs. MAAE for the OU process}
  \label{comprisontheosimuOU}
\end{figure}

\section*{Funding}
This work was supported by the National Key R\&D Program of China (No. 2023YFA1008701) and the National Natural Science Foundation of China (Nos. 12431017 and 12326332).

\section*{Competing interests}
The authors declare that there is no conflict of interests regarding the publication of this article.
\section*{Availability of data}
Data sharing does not apply to this article as no datasets were generated or analyzed during the current study.


\begin{thebibliography}{99}
\bibitem{florens1993estimating}D. Florens-Zmirou, On estimating the diffusion coefficient from discrete observations, J. Appl. Probab. 30 (4) (1993) 790–804, https://doi.org/10.2307/3214513.
\bibitem{bibby1995martingale}B. M. Bibby, M. S{\o}rensen, Martingale estimation functions for discretely observed diffusion processes, Bernoulli (1995) 17–39, https://doi.org/10.2307/3318679.
    \bibitem{jiang1997nonparametric}G. J. Jiang, J. L. Knight, A nonparametric approach to the estimation of diffusion processes, with an application to a short-term interest rate model, Econom. Theory 13 (5) (1997) 615–645, https://doi.org/10.1017/S0266466600006101.
    \bibitem{bandi2003fully}F. M. Bandi, P. C. B. Phillips, Fully nonparametric estimation of scalar diffusion models, Econometrica 71 (1) (2003) 241–283, https://doi.org/10.1111/1468-0262.00395.
    \bibitem{fan2003}J. Fan, C. Zhang, A reexamination of diffusion estimators with applications to financial model validation, J. Am. Stat. Assoc. 98 (461) (2003) 118–134, https://doi.org/10.1198/016214503388619157.
    \bibitem{tang2009parameter}C. Tang, S. Chen, Parameter estimation and bias correction for diffusion processes, J. Econom. 149 (1) (2009) 65–81, https://doi.org/10.1016/j.jeconom.2008.11.001.
    \bibitem{ait2016bandwidth}Y. A{\"\i}t-Sahalia,  J. Y. Park, Bandwidth selection and asymptotic properties of local nonparametric estimators in possibly nonstationary continuous-time models, J. Econom. 192 (1) (2016) 119–138, https://doi.org/10.1016/j.jeconom.2015.11.002.
    \bibitem{ditlevsen2002fast}P. D. Ditlevsen, S. Ditlevsen, K. K. Andersen, The fast climate fluctuations during the stadial and interstadial climate states, Ann. Glaciol. 35 (2002) 457–462, https://doi.org/10.3189/172756402781816870.
    \bibitem{nicolau2007nonparametric}J. Nicolau, Nonparametric estimation of second-order stochastic differential equations, Econom. Theory 23 (5) (2007) 880–898, https://doi.org/10.1017/S0266466607070375.
    \bibitem{barndorff2002econometric}O. E. Barndorff-Nielsen, N. Shephard, Econometric analysis of realized volatility and its use in estimating stochastic volatility models, J. R. Stat. Soc., B: Stat. Methodol. 64 (2) (2002) 253–280, https://doi.org/10.1111/1467-9868.00336.
    \bibitem{andersen2009realized} T. G. Andersen, T. Ter{\"a}svirta, Realized volatility, in: Handbook of financial time series, Springer, 2009, pp. 555–575.
    \bibitem{gloter2001parameter}A. Gloter, Parameter estimation for a discrete sampling of an intergrated ornstein-uhlenbeck process, Statistics 35 (3) (2001) 225–243, https://doi.org/10.1080/02331880108802733.
    \bibitem{ditlevsen2004inference}S. Ditlevsen, M. S{\o}rensen, Inference for observations of integrated diffusion processes, Scand. J. Stat. 31 (3) (2004) 417–429, https://doi.org/10.1111/j.1467-9469.2004.02 023.x.
    \bibitem{gloter2006parameter}A. Gloter, Parameter estimation for a discretely observed integrated diffusion process, Scand. J. Stat. 33 (1) (2006) 83–104, https://doi.org/10.1111/J.1467-9469.2006.00465.X.
    \bibitem{wang2011locallinear}H. Wang, Z. Lin, Local linear estimation of second-order diffusion models, Communications in Statistics-Theory and Methods 40 (3) (2011) 394–407, https://doi.org/10.1080/03610920903391345.
    \bibitem{song2013reweighted}Y. Song, Z. Lin, H. Wang, Re-weighted Nadaraya–Watson estimation of second-order jump-diffusion model, J. Stat. Plan. Inference 143 (4) (2013) 730–744, http://dx.doi.org/10.1016/j.jspi.2012.09.010.
    \bibitem{hong2005nonparametric}Y. Hong, H. Li, Nonparametric specification testing for continuous-time models with applications to term structure of interest rates, Rev. Financ. Stud. 18 (1) (2005) 37–84, https://doi.org/10.1093/rfs/hhh006.
    \bibitem{li2007testing}F. Li, Testing the parametric specification of the diffusion function in a diffusion process, Econom. Theory 23 (2) (2007) 221–250, https://doi.org/10.1017/s0266466607070107.
    \bibitem{ait2009nonparametric}Y. A{\"\i}t-Sahalia, J. Fan, H. Peng, Nonparametric transition-based tests for jump diffusions, J. Am. Stat. Assoc. 104 (487) (2009) 1102–1116, https://doi.org/10.1198/jasa.2009.tm08198.
    \bibitem{kristensen2011semi}D. Kristensen, Semi-nonparametric estimation and misspecification testing of diffusion models, J. Econom. 164 (2) (2011) 382–403, https://doi.org/10.1016/j.jeconom.2011.07.001.
    \bibitem{bu2023uniform}R. Bu, J. Kim, B. Wang, Uniform and $\mathbb{L}_p$ convergences for nonparametric continuous time regressions with semiparametric applications, J. Econom. 235 (2) (2023) 1934–1954, https://doi.org/10.1016/j.jeconom.2023.02.006.
    \bibitem{bierens1983uniform}H. J. Bierens, Uniform consistency of kernel estimators of a regression function under generalized conditions, J. Am. Stat. Assoc. 78 (383) (1983) 699–707, https://doi.org/10.2307/2288140.
    \bibitem{andrews1995nonparametric}D. W. K. Andrews, Nonparametric kernel estimation for semiparametric models, Econom. Theory 11 (3) (1995) 560–596, https://doi.org/10.1017/S0266466600009427.
    \bibitem{liebscher1996strong} E. Liebscher, Strong convergence of sums of $\alpha$-mixing random variables with applications to density estimation, Stoch. Proc. Appl. 65 (1) (1996) 69–80, https://doi.org/10.1016/S0304-4149(96)00096-8.
    \bibitem{hansen2008uniform}B. E. Hansen, Uniform convergence rates for kernel estimation with dependent data, Econom. Theory 24 (3) (2008) 726–748, https://doi.org/10.1017/S0266466608080304.
    \bibitem{kristensen2009uniform} D. Kristensen, Uniform convergence rates of kernel estimators with heterogeneous dependent data, Econom. Theory 25 (5) (2009) 1433–1445, http://dx.doi.org/10.2139/ssrn.1144782.
    \bibitem{li2012local} D. Li, Z. Lu, O. Linton, Local linear fitting under near epoch dependence: uniform consistency with convergence rates, Econom. Theory 28 (5) (2012) 935–958, https://doi.org/10.1017/S0266466612000011.
    \bibitem{chan2014uniform}N. Chan, Q. Wang, Uniform convergence for nonparametric estimators with nonstationary data, Econom. Theory 30 (5) (2014) 1110–1133, https://doi.org/10.1017/S026646661400005X.
    \bibitem{lidegui2021nonparametric} H. Wang, B. Peng, D. Li, C. Leng, Nonparametric estimation of large covariance matrices with conditional sparsity, J. Econom. 223 (1) (2021) 53–72, https://doi.org/10.1016/j.jeconom.2020.09.002.
    \bibitem{koo2012estimation}B. Koo, O. Linton, Estimation of semiparametric locally stationary diffusion models, J. Econom. 170 (1) (2012) 210–233,  https://doi.org/10.1016/j.jeconom.2012.05.003.
    \bibitem{kanaya2017uniform} S. Kanaya, Uniform convergence rates of kernel-based nonparametric estimators for continuous time diffusion processes: A damping function approach, Econom. Theory 33 (4) (2017) 874–914, https://doi.org/10.1017/S0266466616000219.
    \bibitem{Doukhan1995Mixing}P. Doukhan, Mixing: Properties and examples, Lecture Notes in Statistics 85, https://doi.org/10.1007/978-1-4612-2642-0 (1995).
    \bibitem{veretennikov1997polynomial} A. Y. Veretennikov, On polynomial mixing bounds for stochastic differential equations, Stoch. Proc. Appl. 70 (1) (1997) 115–127, https://doi.org/10.1016/S0304-4149(97)00056-2.
    \bibitem{kanaya2016estimation}S. Kanaya, D. Kristensen, Estimation of stochastic volatility models by nonparametric filtering, Econom. Theory 32 (4) (2016) 861–916, https://doi.org/10.1017/S0266466615000079.
    \bibitem{Bosq1998Nonparametric} D. Bosq, Nonparametric statistics for stochastic processes, 2nd ed, Springer, New York, 1998.
    \bibitem{chapman2000short}D. A. Chapman, N. D. Pearson, Is the short rate drift actually nonlinear?, J. Finance 55 (1) (2000) 355–388, https://doi.org/10.1111/0022-1082.00208.
    \bibitem{stanton1997nonparametric}R. Stanton, A nonparametric model of term structure dynamics and the market price of interest rate risk, J. Finance 52 (5) (1997) 1973–2002, https://doi.org/10.1111/j.1540-6261.1997.tb02748.x.
\end{thebibliography}
\end{document}